\documentclass[12pt,a4paper]{amsart}
\usepackage[top=1.15in, bottom=1.15in, left=1.17in, right=1.17in]{geometry}
\usepackage{amsthm, amsmath, amssymb, amscd, latexsym, multicol, verbatim, enumerate, graphicx,xy, color}
\usepackage{amstext,amsfonts,amsbsy}
\usepackage{tikz}
\usepackage[english]{babel}
\usepackage{cite}
\usepackage{stmaryrd}
\usepackage{longtable}
\usepackage{pdflscape}
\usepackage[colorlinks=true, pdfstartview=FitV, linkcolor=blue, citecolor=blue, urlcolor=blue, breaklinks=true]{hyperref}

\makeatother \theoremstyle{remark}
\numberwithin{equation}{section}

\theoremstyle{definition}
\newtheorem{definition}{Definition}\theoremstyle{definition}
\newtheorem{proposition}{Proposition}
\newtheorem{theorem}{Theorem}
\newtheorem{lemma}{Lemma}
\newtheorem{corollary}{Corollary}
\newtheorem{remark}{Remark}
\newtheorem{example}{Example}

%\newtheorem*{rem}{Remark}

%%%%%%%%%%%%%%%%%%%%%%%%%%%%%%%%%%%%%%%%%%%%%%%%%%%%%%%%%%%%%%%%%%%%%%%%
%%%%Abbreviation%%%%

%\renewcommand*\thesubsection{\arabic{subsection}}
\DeclareMathOperator{\Flag}{\mathcal{F}\hspace{-1.6pt}\ell}

\DeclareMathOperator{\init}{in}

\newcommand{\Gr}{{\operatorname*{Gr}}}

\title[Schubert varieties and Feigin's degeneration]{Following Schubert varieties under Feigin's degeneration of the flag variety}
%\title{Degenerate Schubert varieties}

%\date{September 2017}

\begin{document}

\author[L. Bossinger, M. Lanini]{Lara Bossinger, and  Martina Lanini}
\address{
Instituto de Matem\'aticas Unidad Oaxaca, 
Universidad Nacional Aut\'onoma de M\'exico,
Le\'on 2, altos, Oaxaca de Ju\'arez,
Centro Hist\'orico,
68000 Oaxaca,
Mexico}
\email{lara@im.unam.mx}
\address{Dipartimento di Matematica, Universit\`a di Roma ``Tor Vergata'',  Via della Ricerca Scientifica 1, I-00133 Rome, Italy}

\email{lanini@mat.uniroma2.it}

\maketitle
%\tableofcontents
%\printindex

\begin{abstract}
We study the effect of Feigin's flat degeneration of the type $\textrm{A}$ flag variety on the defining ideals of its Schubert varieties. 
In particular, we describe two classes of Schubert varieties which stay irreducible under the degenerations and in several cases we are able to encode reducibility of the degenerations in terms of symmetric group combinatorics. 
As a side result, we obtain an identification of some {\it degenerate Schubert varieties} (i.e. the vanishing sets of initial ideals of the ideals of Schubert varieties with respect to Feigin's Gr\"obner degeneration) with Richardson varieties in higher rank partial flag varieties.
\end{abstract}
\section{Introduction}

Let $G$ be a complex simple Lie group and let $P\subset G$ be a parabolic subgroup. In \cite{F12}, Feigin introduced a flat degeneration of the flag variety $G/P$, which is equipped with an action of the $M$-fold product of the additive group of the field ($M$ being the dimension of a maximal unipotent subgroup of $G$).
 % that in type $A$ it is a Gr\"obner degeneration
 These degenerations of flag varieties (and some generalizations in type ${\rm A}$) have been in the past years intensively studied from many different perspectives (see, for example, \cite{Fei11}, \cite{CFR12}, \cite{C-IL15}, \cite{F16}, \cite{CFFFR17}, \cite{LS18}).
 
In this paper, we deal with the effect of Feigin's degeneration on the Schubert varieties inside $\Flag_n:=SL_n/B$, for $B$ the Borel subgroup of upper triangular matrices. 
In \cite{F12} it is shown that in type $\rm A$ the degeneration $\Flag_n^a$ of $\Flag_n$ can be embedded into the product of projective spaces, exactly as $\Flag_n$, and that the defining ideal is generated by degenerate Pl\"ucker relations. 
More precisely, the defining ideal $\mathcal{I}_{\Flag_n}$ of $\Flag_n$ is generated by Pl\"ucker relations and the defining ideal $\mathcal{I}_{\Flag_n}^a$ is obtained as the initial ideal $\init_{\textbf w}(\mathcal{I}_{\Flag_n})$ with respect to a weight vector $\mathbf{w}$ (whose components are indexed by Pl\"ucker coordinates), as described in Section \ref{sec:plücker}. 
On the other hand, if $v\in S_n$ is a permutation, it is well-known that the ideal $\mathcal{I}_v$ of the Schubert variety $X_v=\overline{BvB/B}\subseteq \Flag_n$ is generated by the Pl\"ucker relations together with certain Pl\"ucker coordinates (see \S\ref{sec:ideals} for a more precise formulation). 
Thus it is natural to ask what happens to $\mathcal{I}_v$ under Feigin's degeneration, that is to investigate $\init_{\textbf w}(\mathcal{I}_v)$.

From the first non-trivial example, it is already clear that not all Schubert varieties under Feigin's degeneration will stay irreducible: for $n=3$, indeed, one of the six Schubert varieties degenerates to a reducible variety. 
Therefore, a consistent part of this paper is directed towards understanding this reducibility phenomenon.  

We recall that the cohomology ring of $\Flag_n$ can be identified (after doubling the degree) with its Chow ring, and the latter is generated by Schubert classes. Moreover, it is shown in \cite{CFR12} that $\Flag_n^a$ admits an affine paving and hence also its cohomology ring can be identified (up to doubling the degree) with its Chow ring. Therefore we expect the above mentioned reducibility phenomenon to be related to the surjectivity of the cohomology ring map $\psi:H^*(\Flag_n^a, \mathbb{Z})\rightarrow H^*(\Flag_n, \mathbb{Z})$ proven in \cite{LS18}. It would be interesting to investigate this relationship, and in particular deduce a description of the kernel of $\psi$ in terms of Schubert classes.

We should mention here that what we refer to as \emph{Feigin's degeneration} is in fact a modified version of his original construction, which was coming from Lie theory. The version we deal with in this paper is the one which has been studied in \cite{C-IL15}. The variety one obtains in this way is isomorphic to Feigin's original degeneration, but in some sense it behaves better with respect to Schubert varieties. In fact, Caldero noticed in \cite{Cal02} that there does not exist a (flat) toric degeneration of the flag variety under which all Schubert varieties degenerate to toric varieties. For $n=3$ (which is the only case, apart from $n=2$, in which $\Flag_n^a$ is toric) our version of the degeneration preserves irreducibility of all but one Schubert variety, while two Schubert varieties would become reducible under Feigin's original definition. This is why we feel that in this setting the definition we use is sort of optimal.

Before focusing on Schubert varieties which become reducible after degenerating, we first describe some cases in which they stay irreducible (see Section \ref{sec:irred}). 
In particular, we prove that there is a class of Schubert varieties (indexed by permutations which are less or equal than a distinguished Coxeter element) whose  defining ideals are  not affected  by the degeneration (see Proposition \ref{prop:isomschubert}).

Section \ref{sec:reducibility} is devoted to sufficient conditions on the permutation $v$ such that the initial ideal $\init_{\bf w}(\mathcal{I}_v)$ is not prime. 
The strategy is to look for Pl\"ucker relations whose initial term is a (degree 2) monomial when considered modulo the Pl\"ucker coordinates which vanish on $X^a_v:=V(\init_{\bf w}(\mathcal I_v))$, which coincide with the ones vanishing on $X_v$. 
The efficiency of some of the conditions we give is then tested by looking at the $n=4$ and $n=5$ examples, for which we can detect all initial ideals containing monomials (see Tables \ref{tab:S4} and \ref{tab:S5}).

%In Section \ref{sec:rich} we discuss the starting point of our investigation.
In previous joint work with Cerulli Irelli \cite{C-IL15}, the second author proved that the degenerate flag variety $\Flag^a_n$ can be embedded in the flag variety $SL_{2n-2}/P$ of partial flags in $\mathbb{C}^{2n-2}$ consisting of odd dimensional spaces (that is, $P=P_{\omega_1+\omega_3+\ldots\omega_{2n-3}}$). Under this embedding, it was shown in \cite{C-IL15} that $\Flag_n^a$ is isomorphic to a Schubert variety. From this fact  (together with classical results) one could obtain  a new proof of projective normality, Frobenius splitting,  and rationality of the singularities of $\Flag_n^a$. In Section \ref{sec:rich}
%Our hope was to be able to 
we further exploit such an isomorphism and study the effect of Feigin's degeneration on Schubert varieties inside $SL_{2n-2}/P$. 
The idea is to show irreducibility of the degeneration of some Schubert variety  by proving that the above-mentioned embedding sends it to a Richardson variety. 
Although our main focus is the analysis of Pl\"ucker relations (cf. Sections \ref{sec:reducibility} and \ref{sec:irred}), for which there is no need to move to a higher rank (partial) flag variety, we decided to dedicate a section to the connection  with  Richardson varieties.
By comparing Proposition \ref{prop:isomschubert} with Lemma \ref{lem:richardson} we obtain a realization of some Richardson varieties inside $SL_{2n-2}/P$ as Schubert varieties in a lower rank (complete) flag variety.

The last section of the paper deals with Schubert divisors, that is Schubert varieties of codimension one in $\Flag_n$. By applying our reducibility criteria from Section \ref{sec:reducibility}, we are able to prove that if $n$ is even all Schubert divisors become reducible, while for $n$ odd this happens for all but one. In this case, the remaining divisor is shown to be isomorphic to a Richardson variety inside $SL_{2n-2}/P$, and hence irreducible.

We want to point out that our paper is very different in spirit from \cite{F16}, where  irreducible flat degenerations of Schubert varieties corresponding to some special Weyl group elements (\emph{triangular elements}) are produced by considering PBW-degenerations of Demazure modules $V_w(\lambda)$ and then realizing the desired degeneration as the closure of an appropriate $\mathbb{G}_a^M$-orbit inside $\mathbb{P}(V_w(\lambda))$. So for any Schubert variety which is indexed by a triangular element (see \cite[Definition 1]{F16}) one can construct a flat irreducible degeneration via Fourier's procedure, while in this article we fix the degeneration (Feigin's) of the whole flag variety and study its effect on Schubert varieties (which are hence simultaneously degenerated). 

Since a first draft of this paper appeared on the arxiv more research has been done regarding degenerations of Schubert varieties. Among them \cite{CM}, where similar methods are employed to study Gr\"obner degenerations of Schubert varieties, and \cite{ChiriviFangFourier_degSchubert,Kambaso_demazure} which are very different in flavour and closely related to \cite{F16}. 

\medskip
\textbf{Acknowledgements.} 
Both authors would like to thank Sara Billey, Rocco Chiriv\`i, Xin Fang, Evgeny Feigin, Ghislain Fourier, Fatemeh Mohammadi and Markus Reineke for their comments on a preliminary version of this paper.
Most of this project was developed during a research visit of L.B. at Universit\`a di Roma ``Tor Vergata'' supported by QM$^2$ through the Institutional Strategy of the University of Cologne (ZUK 81/1). 
L.B. further acknowledges support of the PAPIIT project IA100122, Direcci\'on General de Asuntos del Personal Acad\'emico, Universidad Nacional Aut\'onoma de M\'exico 2022.  M. L. acknowledges the MIUR Excellence Department Project 2023--2027 awarded to the Department of Mathematics, University of Rome Tor Vergata, CUP E83C18000100006 and the 
PRIN2017 CUP E8419000480006.

\section{Preliminaries and notation}

\subsection{Symmetric group combinatorics}
The combinatorics of the symmetric group control many geometric properties of $\Flag_n$ and its Schubert varieties, therefore we spend a little time here introducing the notation we will need later on.

For any two positive integers $i,j\in\mathbb{Z}_ {\geq 1}$, with $i\leq j$ we denote by $[i,j]:=\{a\in\mathbb{Z}\mid i\leq a\leq j\}$. Moreover, we use the short hand notation  $[j]:=[1,j]$. We write $\binom{[n]}{k}$ for the set of subsets of cardinality $k$ inside $[n]$. 

Let $n\geq 2$ and denote by $S_n$ the symmetric group.
Recall that the symmetric group $S_n$ admits a presentation as a Coxeter group, with set of simple reflections $\{s_i\mid i=1, \ldots, n-1\}$, where $s_i$ denotes the transposition $(i,i+1)$. We will use the standard terminology and say that a product $s_{i_1}\ldots s_{i_r}$ is a reduced expression for $v\in S_n$ if $v=s_{i_1}\ldots s_{i_r}$ and all other expressions of $v$ as a product of simple reflections $v=s_{j_1}\ldots s_{j_t}$ are such that $t\geq r$. In this case $r=\ell(w)$ is called the \emph{length} of $w$. We denote by $\leq$ the Bruhat order on $S_n$ and recall the following equivalent characterization (see, for example, \cite[Theorem~2.1.5]{BB05}):
For $v\in S_n$ and $i,j\in [n]$ set
\begin{eqnarray}\label{eq:w[i,j]}
w^{i,j}=\#\{a\in [i]\mid w(a)\ge j\}.
\end{eqnarray}
Then
\begin{eqnarray}\label{eq:BBequiv}
v\le u \Leftrightarrow v^{i,j}\le u^{i,j} \text{ for all } i,j.
\end{eqnarray}

In the following we will also need that if $v\in S_n$ and $i\in[n-1]$, then 
\[ vs_i<v\quad \Leftrightarrow v(i)>v(i+1),\]
or, equivalently, 
\[ s_iv<v\quad \Leftrightarrow v^{-1}(i)>v^{-1}(i+1).\]
The symmetric group $S_n$ acts on $\binom{[n]}{k}$ for any $k$: if $I=\{i_1, \ldots, i_k\}\in\binom{[n]}{k}$ then
\[
v(I):=\{v(i_1), \ldots v(i_k)\}.
\]
This action is transitive, so that $\binom{[n]}{k}$ is identified with the $S_n$-orbit of $[k]$ and hence with the set of minimal length coset representatives in $S_n/\langle s_1, \ldots, s_{k-1}, s_{k+1},\ldots, s_{n-1}\rangle$. In this way the Bruhat order on $S_n$ induces a partial order on $\binom{[n]}{k}$ (see, for instance, \cite[Proposition 2.5.1]{BB05}) that we also denote by $\leq$.
% and the Bruhat order induces a partial order on $\binom{[n]}{k}$,  
%has the following description (see, for instance, \cite[Proposition 2.4.8]{BB05})
%\begin{equation}\label{eqn:BruhatOrderOnGr}
%u(I)\leq v(I)\quad\Leftrightarrow\quad u(i)<v(i) \hbox{ for all }i\in[k]. \end{equation}
We will sometimes write elements $v\in S_n$ as $[v(1),v(2),\dots,v(n)]$. This is referred to as the \emph{one-line} notation. 

\subsubsection{Sequences} In the following sections, we will often need to deal with sequences $(i_1, \ldots, i_k)$ rather than sets $\{i_1, \ldots, i_k\}$. We denote by $\mathcal{S}(n,k)$ the set of sequences of $k$ pairwise distinct numbers between $1$ and $n$.

Given two sequences $I_1=(i_1^{(1)}, \ldots, i_{k}^{(1)})\in \mathcal{S}(n,k)$, $I_2=(i_1^{(2)}, \ldots, i_l^{(2)})\in\mathcal{S}(n,l)$ such that $I_1\cap I_2=\emptyset$, we denote by $(I_1,I_2):=(i_1^{(1)}, \ldots, i_k^{(1)}, i_1^{(2)}, \ldots i_l^{(2)})\in \mathcal{S}(n,k+l)$ the sequence obtained by concatenation.

If $L\in\mathcal{S}(n,d)$ and $J=(j_1, \ldots, j_k)\in\mathcal{S}(n,k)$, then the sequence $L'=(L\setminus(l_{r_1}, \ldots, l_{r_k}))\cup (j_1, \ldots j_k)\in\mathcal{S}(n,d)$ is obtained from $L$ by substituting the subsequence $(l_{r_1}, \ldots, l_{r_k})$ with $(j_1, \ldots, j_k)$, that is $l'_a=l_a$ if $a\not\in\{r_1, \ldots, r_k\}$ and $l'_a=j_b$ if $a=r_b$.
There is a forgetful map
\[
F:\mathcal{S}(n,k)\rightarrow \binom{[n]}{k}, \quad (i_1, \ldots, i_k)\mapsto \{i_1, \ldots i_k\}.
\]
By abuse of notation, if $I\in\mathcal{S}(n,k)$ and $v\in S_n$, we will write $I\leq v([k])$ instead of $F(I)\leq v([k])$ (and $I\geq v([k])$, $I\nleq v([k])$, etc.,  will have an analogous meaning). 

\subsubsection{A special Coxeter element}
The Coxeter element $c=s_{n-1}s_{n-2}\cdots s_2s_1\in S_n$ will play an important role later on. Observe that in the one-line notation
\[
c=[n,1,2,3\ldots, n-1].
\] 
Thus, by \cite[Proposition 2.4.8]{BB05}, 
%\eqref{eqn:BruhatOrderOnGr}, 
for a subset $I\in \binom{[n]}{d}$ the following holds
\begin{eqnarray}\label{eq:coxeter}
I\le c([d])\Leftrightarrow I=[d-1]\cup \{b\}\text{ for }d\le b\le n.
\end{eqnarray}

\subsection{Basics on the flag variety}
Let $n\geq 2$. We denote by $\Flag_n$ the variety of complete flags in $\mathbb{C}^n$.  Let $(e_i)_{1\le i\le n}$ be the standard basis of $\mathbb C^n$. Let $B\subset SL_n$ be the Borel subgroup of upper triangular matrices.  The group $SL_n$ acts (by base change) transitively on $\Flag_n$ and we can identify the flag variety with the quotient $SL_n/B$ by  looking at the $SL_n$-orbit of the standard flag $E_\bullet=(\{0\}\subset E_1\subset \dots\subset E_{n-1}\subset \mathbb C^n)\in \Flag_n$ with
\[ E_i:=\textrm{span}_{\mathbb{C}}\{e_1, \ldots, e_i\} \quad (i=1, \ldots n-1).\]
  Recall that under the left action of $B$, the flag variety decomposes as a union of Schubert cells indexed by the elements of the symmetric group $S_n$:
\[
SL_n/B= \bigsqcup_{v\in S_n} BvB/B
\]
where, by abuse of notation, $v$ in $BvB/B$ denotes the corresponding permutation matrix in $SL_n$. Finally, let $X_v$ be the Schubert variety, that is the closure  $\overline{BvB/B}$ of a Schubert cell.

Analogously, also $B_-$, the Borel subgroup of lower triangular matrices, acts by left multiplication on $SL_n/B$, providing the decomposition:
\[
SL_n/B= \bigsqcup_{u\in S_n} B_-uB/B.
\]
We denote by $X^u$ the opposite Schubert variety $\overline{B_-uB/B}$. In \S\ref{sec:rich}, we will also consider Richardson varieties $X_v^u:=X_v\cap X^u$.

\subsubsection{Pl\"ucker relations}\label{sec:plücker}
Our main reference for Pl\"ucker coordinates and relations is \cite{Fu97}, while we refer to \cite{F12} for the degenerate Pl\"ucker relations.

We start by recalling the Pl\"ucker embedding of a Grassmannian. Recall that $(e_i)_{1\le i\le n}$ is the standard basis of $\mathbb C^n$, so that 
\[
\{e_{i_1}\wedge\dots\wedge e_{i_k}\mid 1\leq i_1<i_2<\ldots <i_k\leq n\}
\]
is a basis of $\wedge^k\mathbb C^n$. Let $(\wedge^k\mathbb C^n)^*$ be the dual vector space, then the Pl\"ucker coordinate $p_{i_1,\dots,i_k}\in(\wedge^k\mathbb C^n)^*$ for $1\leq i_1<i_2<\ldots <i_k\leq n$ is defined to be the basis element dual to $e_{i_1}\wedge\ldots \wedge e_{i_k}$.
For  $i_1, \ldots ,i_k\in [n]$ pairwise distinct, but not necessarily increasing, the Pl\"ucker coordinate $p_{i_1, \ldots, i_k}$ has the following property
\[
p_{\sigma(i_1),\dots,\sigma(i_k)}=(-1)^{\ell(\sigma)}p_{i_1,\dots,i_k} \quad  \hbox{ for all }\sigma \in S_n.
\]
Denote by $p_I$ the Pl\"ucker coordinate corresponding to a sequence $I=(i_1,\dots,i_k)\in\mathcal{S}(n,k)$. In the following sections it will be sometimes convenient to simplify notation and index some Pl\"ucker coordinates by a set instead of a sequence. This has to be interpreted as being indexed by the sequence obtained by arranging the elements of the set in an increasing order.

We have obtained in this way the Pl\"ucker embedding
\begin{equation}\label{eq:plückerembed}
\Gr(k,\mathbb C^n)\hookrightarrow \mathbb P(\wedge^k\mathbb C^n).
\end{equation}
The flag variety is embedded in the product of Grassmannians
\begin{eqnarray*}
\Flag_n \hookrightarrow \Gr(1,\mathbb C^n)\times \Gr(2,\mathbb C^n)\times\dots\times\Gr(n-1,\mathbb C^n).
\end{eqnarray*}
By composing the latter embedding with the embedding \eqref{eq:plückerembed} for each Grassmannian in the product, we get
\[
\Flag_n \hookrightarrow \mathbb{PC}^n\times\mathbb P(\wedge^2\mathbb C^n)\times\dots\times\mathbb P(\wedge^{n-1}\mathbb C^n).
\]
Denote by $\mathcal{I}_{\Flag_n}$ the (homogeneous) ideal of $\Flag_n$ in $\mathbb{C}[p_{i_1, \ldots, i_k}\mid 1\leq i_1<i_2<\ldots<i_k\leq n, \ k\in[n-1]]$ with respect to this embedding. 
Then $\mathcal{I}_{\Flag_n}$ is generated by elements in
\[
\{R^k_{(j_1, \ldots, j_e), (l_1, \ldots, l_d)}\mid  e\leq d,  \ k\in [e] \}  
\]
given by
\begin{equation}\label{eq:plücker rel}
R^k_{J,L}=p_Jp_L-\sum_{1\le r_1<\dots<r_k\le d} p_{J'}p_{L'},
\end{equation}
  where $L=(l_1,\dots,l_d)\in\mathcal{S}(n,d)$, $J=(j_1,\dots,j_e)\in\mathcal{S}(n,e)$,
$L'=(L\setminus (l_{r_1},\dots,l_{r_k}))\cup (j_1,\dots, j_k)$ and
$J'= (J\setminus ( j_1,\dots,j_k))\cup(l_{r_1},\dots,l_{r_k})$. 
The elements $R_{J,L}^k$ will be referred to as Pl\"ucker relations.
To simplify notation we set 
\begin{eqnarray}\label{eq:mathcalL}
\mathcal{L}_{J,L}^k=\left\{
(J',L')\mid 
\begin{smallmatrix}
\exists 1\le r_1<\dots<r_k\le \#L,\\
J'= (J\setminus ( j_1,\dots,j_k))\cup(l_{r_1},\dots,l_{r_k})  ,\\
L'=(L\setminus (l_{r_1},\dots,l_{r_k}))\cup (j_1,\dots, j_k)
\end{smallmatrix}
\right\}.
\end{eqnarray}

The weight vector ${\bf w}\in \mathbb R^{\binom{n}{1}+\dots +\binom{n}{n-1}}$ is defined componentwise by setting for  $I=\{i_1,\dots,i_k\}\in\binom{[n]}{k}$
\[
{\bf w}_{I}=\#\{r\mid k\le i_r\le n-1\}.
\]
If $I_1, \ldots, I_r\in{[n]\choose k}$, the ${\bf w}$-weight of the monomial $\prod_{t=1}^r p_{I_t}$ is $\sum_{t=1}^r {\bf w}_{I_t}$, while the initial form of a polynomial $f$ consists of the sum of those monomials whose ${\bf w}$-weight is \emph{minimal} among the weights of all monomials in $f$. 
Given an ideal $\mathcal I\subset \mathbb C[p_{i_1,\dots,i_d}\mid 1\le i_1<i_2<\dots <i_d\le n,d\in [n-1]]$ its \emph{initial ideal} is $\init_{\bf w}(\mathcal I)=(\init_{\bf w}(f)\mid f\in \mathcal I)$.
A (finite) set of elements in $\mathcal I$ whose initial forms generate $\init_{\bf w}(\mathcal I)$ is called a \emph{Gr\"obner basis}.
In \cite[Theorem 3.13]{F12} computes a Gr\"obner basis  for $\mathcal I_{\Flag_n}$ whose elements are the Pl\"ucker relations \eqref{eq:plücker rel}. Their initial forms are given by
%Then the initial ideal $\init_{\bf w}(\mathcal{I}_{\Flag_n})$ is generated by the initial forms $\init_{\bf w}(R_{J,L}^k)$ by \cite[Theorem 3.13]{F12}. 
%(where by convention the initial form of $R^k_{J,L}$ consists of those monomials whose ${\bf w}$-weight is \emph{minimal} among the weights of all monomials in $R^k_{J,L}$). 
\begin{eqnarray*}
\init_{\bf w}(R^k_{J,L})=p_Jp_L-\sum_{
\begin{smallmatrix}
(J',L')\in \mathcal{L}_{J,L}^k\\
\{l_{r_1},\dots,l_{r_k}\}\cap[e,d-1]=\emptyset
\end{smallmatrix}}
 p_{J'}p_{L'},
\end{eqnarray*}
where the leading term is non-zero, only if 
\begin{eqnarray}\label{eq:degflagplückerlead}
\{j_1,\dots,j_k\}\cap[e,d-1]&=&\emptyset.
\end{eqnarray}
We can choose $J,L$ in such a way that \eqref{eq:degflagplückerlead} holds. Observe that for $e=d$, we always have $\init_{\bf w}(R^k_{J,L})=R_{J,L}^k$ since the condition \eqref{eq:degflagplückerlead} is empty.

\begin{definition}[\cite{F12}]
The \emph{degenerate flag variety} is  the vanishing of the ideal $\init_{\bf w}(\mathcal{I}_{\Flag_n})$, that is
\[
\Flag_n^a:=V(\init_{\bf w}(\mathcal{I}_{\Flag_n}))\subset  \mathbb{PC}^n\times\mathbb P(\wedge^2\mathbb C^n)\times\dots\times\mathbb P(\wedge^{n-1}\mathbb C^n).
\]
\end{definition}
\begin{remark} Feigin's original definition, valid for any simple Lie group, was different from the one we have just given, which is a characterization of the type $\textrm A$ degenerate flag variety by \cite[Theorem 3.13]{F12}. As already mentioned in the introduction, we modify Feigin's definition to match the one considered in \cite{C-IL15}. Explicitly, to obtain our degeneration from Feigin's original one, a global shift by $-1$ (modulo $n$) to all indices is needed.    
\end{remark}

\subsection{Ideals for Schubert varieties and their degeneration}\label{sec:ideals}

%Recall the following property of initial ideals.
%\begin{lemma}\label{lem:init}
%Consider two ideals $\mathcal{I},\mathcal{J}\subset \mathbf{k}[x_1,\dots,x_n]$ and ${\bf u}\in \mathbb{R}^n$. Let $\init_{\bf u}(\mathcal{I})=(\init_{\bf u}(f_1),\dots,\init_{\bf u}(f_r))$ and $\init_{\bf u}(\mathcal{J})=(\init_{\bf u}(g_1),\dots,\init_{\bf u}(g_s))$. Then 
%\[
%\init_{\bf u}(\mathcal{I}+\mathcal{J})=(\init_{\bf u}(f_i),\init_{\bf u}(g_j)\mid 1\le i\le r,1\le j\le s)=\init_{\bf u}(\mathcal I)+\init_{\bf u}(\mathcal J).
%\]
%\end{lemma}

For $v\in S_n$ the defining ideal of the Schubert variety $X_v\subset \Flag_n$ is given by the vanishing of $(p_I)_{I\not\le v([\#I])}$. It is shown in \cite[\S10.12]{LLM} (see also \cite[Theorem 3]{KR}) that by  embedding $X_v\hookrightarrow  \mathbb{PC}^n\times\mathbb P(\wedge^2\mathbb C^n)\times\dots\times\mathbb P(\wedge^{n-1}\mathbb C^n)$, we obtain the ideal
\begin{equation}\label{eq:idealSchubert}
\mathcal{I}_v:=\mathcal{I}_{\Flag_n}+(p_I)_{I\not\le v([\#I])}
\end{equation}
of $\mathbb{C}[p_{i_1, \ldots, i_d}\mid 1\leq i_1<i_2<\ldots<i_d\leq n, \ d\in[n-1]]$.
Feigin's degeneration of the flag variety induces a degeneration $X^a_v\subset\Flag_n^a$ of any Schubert variety $X_v\subset \Flag_n$:
\begin{equation}
X_v^a:=V(\init_{\bf w}(\mathcal{I}_v))\subset  \mathbb{PC}^n\times\mathbb P(\wedge^2\mathbb C^n)\times\dots\times\mathbb P(\wedge^{n-1}\mathbb C^n).
\end{equation}

\begin{comment}
\textcolor{red}{As by \cite[Theorem 3.13]{F12} we know that $\init_{\bf w}(\mathcal{I}_{\Flag_n})=(\init_{\bf w}(R^k_{J,L}))_{k,J,L}$, we deduce the following %from Lemma~\ref{lem:init} 
\begin{eqnarray}\label{eq:degSchubertideal}
\init_{\bf w}(\mathcal{I}_v)=
(\init_{\bf w}(R^k_{J,L}))_{k,J,L}+(p_I)_{I\not\le v([\#I])}.
\end{eqnarray}
Hence, Pl\"ucker coordinates indexed by increasing sequences (or, in our convention, sets in $\bigcup_{r=1}^{n-1}\binom{[n]}{r}$) form a  Gr\"obner basis with respect to ${\bf w}$ for the ideals of Schubert varieties.\footnote{This part was wrong and I suggest to replace it with the blue below.}}
\end{comment}

In what follows we study the initial ideals $\init_{\bf w}(\mathcal I_v)$ in detail. 
Note that $\init_{\bf w}(p_I)=p_I$ for all $I\in \mathcal{S}(n,d)$, for all $d\in[n-1]$. 
Moreover, we have an inclusion:
\begin{eqnarray}\label{eq:containment}
\init_{\bf w}(\mathcal{I}_v)\supseteq
(\init_{\bf w}(R^k_{J,L}))_{k,J,L}+(p_I)_{I\not\le v([\#I])}.
\end{eqnarray}

The following example shows that this inclusion may be strict. In the proof of Theorem~\ref{thm:small flag} instead we will encounter examples of \eqref{eq:containment} being an equality.

\begin{example}
Consider the ideal $\mathcal I_{\Flag_4}$. Among its Pl\"ucker relations we have
\[
p_4p_{123}-p_3p_{124}+p_2p_{134}-p_1p_{234}.
\]
The first two terms have ${\bf w}$-weight 2 while the last to have ${\bf w}$-weight 3, so its initial form is $p_4p_{123}-p_3p_{124}$.
Now consider %$v=s_1s_4s_3s_2s_4s_3s_4\in S_5$ or 
$v=s_1s_2s_3\in S_4$, which in the  one-line notation is %$[2,5,4,3,1]$ or 
$[2,3,4,1]$. Hence, $\{p_I\}_{I\not \le v([\#I])}
%=\{p_{345},p_{45},p_{35},p_{34},p_5,p_4,p_3\}$ or $
=\{p_3,p_4,p_{14},p_{24},p_{34}\}$. 
In particular, this implies that $f:=p_2p_{134}-p_1p_{234}\in \mathcal I_v$ and by definition its initial form lies in $\init_{\bf w}(\mathcal I_v)$.
As both monomials have the same ${\bf w}$-weight, $f$ agrees with its initial form. Notice however that $f$ does not lie in $(\init_{\bf w}(R^k_{J,L}))_{k,J,L}+(p_I)_{I\not\le v([\#I])}$. This demonstrates that the containment in \eqref{eq:containment} is strict in general.
\end{example}

\section{Two classes of irreducible $X_v^a$}\label{sec:irred}

We investigate two classes of Schubert varieties which degenerate to irreducible varieties. In this section we use some basics on Gr\"obner bases which we summarize for completeness. For more details we refer to \cite{HH,Stu96}.

\medskip
A {\it term order} on $\mathbb C[x_1, \dots , x_n]$ is a total order $<$ on the set of monic monomials in $\mathbb C[x_1, \dots , x_n]$ such that for every $\alpha,\beta,\gamma$ in $\mathbb Z_{\ge 0}^n$ we have that 
\begin{center}
(i) $1\le {\bf x}^\alpha$, \quad  and \quad (ii) if ${\bf x}^\alpha<{\bf x}^\beta$, then ${\bf x}^{\alpha+\gamma}<{\bf x}^{\beta+\gamma}$. 
\end{center}
The {\it initial monomial} of an element $f=\sum_{\alpha\in \mathbb{Z}^n_{\geq 0}} c_{\alpha} {\bf x}^{\alpha}\in \mathbb C[x_1, \dots , x_n]$ with respect to $<$ is $\init_<(f):=\max_<\{{\bf x}^\alpha\mid c_\alpha\not=0\}$.
The {\it initial ideal} of an ideal $J\subseteq \mathbb C[x_1, \dots , x_n]$ with respect to $<$ is defined as $\init_<(J):=(\init_<(f)\mid f\in J)$.\medskip

Let $\init_<(J)$ be a monomial initial ideal of the ideal $J$ for some term order $<$ on $\mathbb{C}[x_1,\ldots,x_n]$. 
Then the set $\mathbb B_{<}:=\{\bar{\bf x}^\alpha \mid {\bf x}^\alpha\not \in \init_<(J)\}$ is a vector space basis of $\mathbb C[x_1, \dots , x_n]/J$ (and $\mathbb C[x_1, \dots , x_n]/\init_<(J)$) called {\it standard monomial basis}, see e.g. \cite[Proposition 1.1]{Stu96}.

\medskip
Let $<$ be a term order on $\mathbb C[x_1, \dots , x_n]$ and $\mathcal{G}=\{g_1,\ldots,g_s\}$ a finite generating set of an ideal $J$. 
Then the {\it $S$-polynomial} of $g_i$ and $g_j$ is defined as 
\begin{displaymath}
S(g_i,g_j):=\frac{{\rm lcm}(\init_{<}(g_i),\init_{<}(g_j))}{\init_{<}(g_i)}g_i-\frac{{\rm lcm}(\init_{<}(g_i),\init_{<}(g_j))}{\init_{<}(g_j)}g_j. 
\end{displaymath}
{\it Buchberger's criterion} says that $\mathcal G$ is a Gr\"obner basis if and only if for all $1\le i<j\le s$ the $S$-polynomial $S(g_i,g_j)$ reduces to zero with respect to $\{g_1,\dots,g_s\}$,  see e.g \cite[Theorem 2.3.2]{HH}.

\subsection{Small flag varieties}
The main result of this section is the following.

\begin{theorem}\label{thm:small flag}
Let $v\in S_n$ be the minimal representative of the longest word in $S_n/\langle s_{1},\dots,s_{i},s_{i+r},\dots,s_{n-1}\rangle$ for suitable $i$ and $r$.
%some $ i\ge 1$ and $r\ge0$ such that $i+r< n-1$. 
Then
\[
X_v^a\cong \Flag_r^a.
\]
In particular, $X_v^a$ is irreducible.
\end{theorem}

Before we prove the result, let us establish some useful lemmata. Note that written in one-line notation $v$ is of form 
\[
v=[
1, 2, \dots , i, i+r, i+r-1,\dots, i+1, i+r+1, \dots, n
].
\]
So $v(j)=j$ for $j\in[i]\cup[i+r+1,n]$ and $v(i+k)=i+r-k+1$ for $k\in[r]$. 

\begin{lemma}
  For the Schubert variety we have {$X_v\cong \Flag_r$}. In particular, the only non-vanishing Pl\"ucker coordinates besides $p_{[s]}$ for $s\le n-1$ are associated with the index sets in
\begin{equation}\label{eq:nonzero Plueck}
    \mathcal J_v=\{I\mid I=[i]\cup\{l_1,\dots,l_s\},s\in[r-1], \ l_j\in [i+1,i+r]\ \forall j\}.
\end{equation}  
\end{lemma}

\begin{proof}
  There is a bijection 
\[
\rho:\mathcal{J}_v\rightarrow \bigcup_{s=1}^{r-1}{\mathcal[r]\choose s},\quad I\mapsto \tilde{I},
\]
where if $I=[i]\cup\{l_1,l_2\dots,l_s\}$, we set $\tilde{I}=\{l_1-i,l_2-i,\dots,l_s-i\}$. This induces a bijection between the set of Pl\"ucker coordinates $\neq p_{[s]}$, $s\in[n-1]\setminus [i+1,i+r]$, which are non-vanishing on $X_v$ (that is, the ones involved in the relevant Pl\"ucker relations) and  Pl\"ucker coordinates $(\tilde{p}_{K})$ which generate the coordinate ring of $\Flag_r$. 
Notice that for  $J,L$ with $F(J),F(L)\in\mathcal{J}_v$, the Pl\"ucker relation $R_{J,L}^k$ is not identically 0 if and only if $R_{\tilde{J},\tilde{L}}^k$ is not identically 0 (since this happens for $k\in[\#(L\setminus (L\cap J))]=[\#(\tilde{L}\setminus(\tilde{L}\cap\tilde{J}))]$).
In particular, $\mathcal I_v$ is generated by $\{R^k_{J,L}\}_{k,J,L\in \mathcal J_v}\cup\{p_I\}_{I\not \in \mathcal J_v \cup\{[s]\mid s\in [n-1]\}}$.  
We extend the bijection to a map
\begin{equation}\label{eq:def rho}
\rho:\mathbb C[p_I\mid I\subset [n]]\to \mathbb C[p_{\tilde I}\mid \tilde I\subset [r]], \quad p_I\mapsto \left\{\begin{matrix} p_{\tilde I} & \text{ if } I\in \mathcal J_v \\ 0 & \text{ otherwise }. \end{matrix}\right.
\end{equation}
Then $\rho( (R^k_{J,L})_{k;J,L\in\mathcal J_v})=\mathcal I_{\Flag_r}$.
\end{proof}

Next, we establish a connection between the defining ideal of the degenerate flag variety $\Flag_r^a$ and the initial ideal defining $X_v^a$. We keep the notation introduced in \eqref{eq:def rho} and \eqref{eq:nonzero Plueck}.

\begin{lemma}\label{lem:init tilde w}
    Let $\tilde {\bf w}$ be the weight for $\Flag_r$, then $\rho( \init_{\bf w}((R^k_{J,L})_{k,J,L\in\mathcal J_v}))= \init_{\tilde {\bf w}}(\mathcal I_{\Flag_r})$.
\end{lemma}

\begin{proof}
Let $L=((1,\dots,i),(l_1,\dots,l_d))>J=((j_1,\dots,j_e),(1,\dots,i))$. 
Consider the relation $R^k_{\tilde{J},\tilde{L}}$. Without loss of generality we can assume that $J$ and $L$ are chosen in such a way that $\init_{\bf w}(R^k_{J,L})$ contains the monomial $p_Jp_L$. All other monomials $p_{J'}p_{L'}$ in $\init_{\bf w}(R^k_{J,L})$ are obtained from $p_Jp_L$ by choosing $1\le r_1<\dots<r_k\le i+d$, such that $\{l_{r_1},\dots,l_{r_k}\}\cap [i+e,i+d-1]=\emptyset$. 
This is the case if and only if $\{\tilde{l}_{r_1},\dots,\tilde{l}_{r_k}\}\cap [e,d-1]=\emptyset$. 
\end{proof}

In what follows we use Feigin's standard monomial basis given by semistandard PBW-tableaux. 
As we work throughout the paper with conventions for the weight vector ${\bf w}$ as in \cite{C-IL15} a global shift in the indices of all Pl\"ucker variables is needed before we can use Feigin's basis in our setting. 
Whenever we use the combinatorics from \cite{F12} in this section we assume we have applied the global shift to our index sets.

Recall that by \cite[Theorem 4.10]{F12} there exists a standard monomial basis (indexed by \emph{semistandard PBW-tableaux}) for $\mathbb C[p_{\tilde I}\mid \tilde I\subset [r]]/\init_{\tilde{\bf w}}(\mathcal I_{\Flag_r})$ (and $\mathbb C[p_{\tilde I}\mid \tilde I\subset [r]]/\mathcal I_{\Flag_r}$), denote it by $\mathbb B_{\rm PBW}$. 

\begin{lemma}\label{lem:term order}
   There exists a term order $\prec$ on $\mathbb C[p_{\tilde I}\mid \tilde I\subset [r]]$ such that $\mathbb B_{\rm PBW}$ equals the standard monomial basis given by monomials not contained in $\init_{\prec}(\mathcal I_{\Flag_r})$. Moreover, the set $\{R^k_{\tilde J,\tilde L}\}$ is a Gr\"obner basis for $\mathcal I_{\Flag_r}$ with respect to $\prec$. 
\end{lemma}

\begin{proof}
In \cite[Lemma 4.9]{F12} Feigin introduces a partial order $\le$ on ${\mathbb C}[p_{\tilde I}\mid \tilde I\subset [r]]$ such that for every monomial ${\bf p}^{a_T}\in {\mathbb C}[p_{\tilde I}\mid \tilde I\subset [r]]$ corresponding to a non-semistandard PBW-tableau $T$ there exists an element $f\in \init_{\tilde{\bf w}}(\mathcal I_{\Flag_r})$ that contains ${\bf p}^{a_T}$ in its support and further satisfies
\[
{\bf p}^{a_T} \ge {\bf p}^a \text{ for all } {\bf p}^a \text{ non zero monomial in } f.
\]
Moreover, $f={\bf p}^vR^k_{\tilde J,\tilde L}$ for a fixed monomial ${\bf p}^v$ that divides ${\bf p}^{a_T}$ and certain $k,\tilde J,\tilde L$.
Given $\tilde {\bf w}$ and the partial order $\le$ we define a term order on $\mathbb{C}[p_{\tilde I}:\tilde I\subset [r]]$ as follows: ${\bf p}^u\prec {\bf p}^v$ if and only if 
\begin{enumerate}
    \item $\tilde{\bf w}\cdot u >\tilde{\bf w}\cdot v$, or \footnote{Note the switch here: ${\bf p}^u\prec {\bf p}^v$ if $\tilde{\bf w}\cdot u >\tilde{\bf w}\cdot v$. This is because we have chosen to use the minimum convention for initial ideals with respect to weight vectors while for initial ideals with respect to term orders the maximum is considered.}
    \item $\tilde{\bf w}\cdot u =\tilde{\bf w}\cdot v$, and ${\bf p}^u\le {\bf p}^v$, or
    \item $\tilde{\bf w}\cdot u =\tilde{\bf w}\cdot v$, ${\bf p}^u$ and ${\bf p}^v$ are not comparable with respect to $\le$, and  ${\bf p}^u<_{\rm lex} {\bf p}^v$.
\end{enumerate}
Here $<_{\rm lex}$ denotes the lexicographic order on $\mathbb{C}[p_{\tilde I}:\tilde I\subset [r]]$ with underlying lexicographic order on the variables corresponding to their index sets.
Our term order $\prec$ is a refined version of a term order induced by a weight (see, for example the order $\prec_w$ in \cite[page 4]{Stu96}).
In particular, \cite[Proposition 1.8]{Stu96} holds also in our case and we have
\begin{eqnarray}\label{eq:init of init}
\init_{\prec}(\init_{\tilde{\bf w}}(\mathcal I_{\Flag_r})) = \init_{\prec}(\mathcal I_{\Flag_r}).
\end{eqnarray}
From \cite[Proof of Lemma 4.9]{F12} it follows that for ${\bf p}^{a_T}$ and $f$ as above we have
\begin{eqnarray}\label{eq:init non sstd}
\init_{\prec}(f)={\bf p}^{a_T}\in \init_{\prec}(\mathcal I_{\Flag_r}).
\end{eqnarray}
In particular, the cosets of the \emph{standard monomials}, i.e. ${\bf p}^u\not\in \init_{\prec}(\mathcal I_{\Flag_r})$, form a (standard monomial) basis for $\mathbb C[p_{\tilde I}:\tilde I\subset [r]]/\mathcal I_{\Flag_r}$.  
By \eqref{eq:init of init} they also form a basis for $\mathbb C[p_{\tilde I}:\tilde I\subset [r]]/\init_{\tilde{\bf w}}(\mathcal I_{\Flag_r})$), denote it by $\mathbb B_{\prec}$.
In particular, we deduce from \eqref{eq:init non sstd} that every standard monomial corresponds to a semistandard PBW-tableaux.
Hence, $\mathbb B_\prec \subset \mathbb B_{\rm PBW}$. But as both are bases for the same algebra they have to be equal. 
This implies the first claim. The second follows as $f={\bf p}^vR^k_{\tilde J,\tilde L}$, and so in particular ${\bf p}^{a_T}\in (\init_{\prec}(R^k_{\tilde J,\tilde L}))_{k,\tilde J,\tilde L}$.  
\end{proof}

\begin{proposition}\label{prop:gb}
    The set $\{R^k_{J,L}\}_{k,J,L\in \mathcal J_v}\cup \{p_I\}_{I\not \in \mathcal J_v}$ is a Gr\"obner basis for $\mathcal I_v$ and ${\bf w}$, denoted by $\mathcal G_{v;{\bf w}}$.
\end{proposition}

\begin{proof}
We use $\prec$ as defined in the proof of Lemma~\ref{lem:term order} and the map $\rho$ from \eqref{eq:def rho} to define a term order on $\mathbb C[p_{I}:I\subset [n]]$:
\[
{\bf p}^u<{\bf p}^t \quad \Leftrightarrow \quad  {\bf p}^u\not\in (p_I)_{I\not \in \mathcal J_v}\ni {\bf p}^t, \ \ \text{or}\ \ {\bf p}^t,{\bf p}^u\not\in (p_I)_{I\not \in \mathcal J_v} \ \ \text{and} \ \ \rho({\bf p}^u)\prec \rho({\bf p}^t).
\]
By definition of $<$ and Lemma~\ref{lem:init tilde w} we have $\init_{<}(\init_{\bf w}(R^k_{J,L})_{k;J,L\in \mathcal J_v})=\init_{<}((R^k_{J,L})_{k;J,L\in\mathcal J_v})$.
Moreover, as the $R^k_{\tilde J,\tilde L}$ constitute a Gr\"obner basis for $\mathcal I_{\Flag_r}$ and $\prec$ by Lemma~\ref{lem:term order}, it follows from Buchberger's criterion that the S-polynomials of pairs of these elements reduce to zero. 
Given the map $\rho$, the same must be true for S-polynomials of elements $R^k_{J,L}$ with $J,L\in \mathcal J_v$ with respect to the term order $<$.
Hence, in order to verify the claim we only need to compute S-polynomials of the relevant Pl\"ucker relations and the vanishing Pl\"ucker variables. 
Consider $R^k_{J,L}$ with $J,L\in\mathcal J_v$ and $p_I$ with $I\not \in \mathcal J_v$. Then $\init_{<}(R^k_{J,L})$ and $\init_<(p_I)$ are relatively prime. So by \cite[Lemma 2.3.1]{HH} their S-polynomials reduces to zero over $R^k_{J,L}$ and $p_I$. 
As the same is true for the S-polynomials of variables $p_I,p_{I'}$ with $I,I'\not\in \mathcal J_v$, the claim follows by Buchberger's criterion.  
\end{proof}

\begin{proof}[Proof of Theorem~\ref{thm:small flag}]
    We need to show that the isomorphism of $X_v$ and $\Flag_r$ induced by $\rho$ survives the degeneration. This is true as by Lemma~\ref{lem:init tilde w} and Proposition~\ref{prop:gb} $\rho$ maps the initial ideal defining $X_v^a$ to the ideal defining $\Flag_r$. 
    Lastly, by \cite[\S 5.1]{F12} the degenerate flag variety is the closure of a homogeneous space and therefore irreducible. As $X^a_v\cong \Flag_r^a$ by the above, the claim follows.
\end{proof}

Let  $\underline{i}=\{i_1,\dots,i_r\}\subsetneq[n-1]$. We set $m:=\min\{\underline{i}\}$, $M:=\max\{\underline{i}\}$, and $r:=M-m+1$. Let $v\in\langle s_{i_1},\cdots ,s_{i_r}\rangle\subset S_n$ and denote by $\widetilde{v}$ the element $\widetilde{s}_{i_1-m+1}\cdots \widetilde{s}_{i_r-m+1}\in S_r$. In this notation, from the proof of Theorem~\ref{thm:small flag} we can deduce the following result, which in this case allows one to reduce to smaller rank flag varieties.

\begin{corollary}\label{cor:smallflag}
Let $\underline{i}=\{i_1,\dots,i_r\}\subsetneq[n]$ and $v\in\langle s_{i_1},\cdots, s_{i_r}\rangle\subset S_n$. Then for $X^a_v\subset \Flag_n^a$ we have
\[
X^a_v\cong X^a_{\widetilde{v}}\subset \Flag_r^a.
\]
\end{corollary}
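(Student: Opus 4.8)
The plan is to reduce Corollary~\ref{cor:smallflag} to Lemma~\ref{lem:smallflag} by essentially repackaging the bijective dictionary established in that lemma's proof. The key observation is that the hypothesis $v \in \langle s_{i_1}, \ldots, s_{i_r}\rangle$, with $m = \min\{\underline{i}\}$ and $M = \max\{\underline{i}\}$, forces $v$ to fix pointwise all indices in $[1,m-1] \cup [M+1, n]$ and to permute only the block $[m, M]$. Thus, exactly as in Lemma~\ref{lem:smallflag}, the only Pl\"ucker coordinates non-vanishing on $X_v$ (besides the $p_{[s]}$) are indexed by sets of the form $[m-1] \cup K$ with $K \subseteq [m, M]$, and the defining data of $X_v$ genuinely lives on a rank-$r$ sub-flag determined by the block $[m,M]$.

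First I would make precise the shift map $I \mapsto \tilde{I}$ that sends $[m-1] \cup \{l_1, \ldots, l_s\}$ to $\{l_1 - (m-1), \ldots, l_s - (m-1)\}$, which is the direct analogue of the map $\mathcal{J}_v \to \bigcup_s \binom{[r]}{s}$ appearing in Lemma~\ref{lem:smallflag} (there $i = m-1$). I would check that this induces an isomorphism of the relevant polynomial rings identifying the Pl\"ucker coordinates non-vanishing on $X_v$ with those generating the coordinate ring of $\Flag_r$, and that under it $\widetilde{v} = \widetilde{s}_{i_1 - m + 1} \cdots \widetilde{s}_{i_r - m + 1}$ is precisely the permutation in $S_r$ whose Schubert variety corresponds to the block permutation carried by $v$. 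The core point, already proved inside Lemma~\ref{lem:smallflag}, is that this shift sends $\init_{\bf w}(R^k_{J,L})$ to $\init_{\bf w}(R^k_{\tilde{J}, \tilde{L}})$ whenever $F(J), F(L)$ are among the relevant index sets; the combinatorial condition $\{l_{r_1}, \ldots, l_{r_k}\} \cap [m-1+e, m-1+d-1] = \emptyset$ translates verbatim into $\{\tilde{l}_{r_1}, \ldots, \tilde{l}_{r_k}\} \cap [e, d-1] = \emptyset$, so the leading terms and surviving monomials match up.

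The remaining ingredient is the Schubert condition itself: I must verify that the vanishing conditions $(p_I)_{I \not\le v([\#I])}$ defining $X_v$ correspond, under the shift, exactly to the conditions $(p_{\tilde{I}})_{\tilde{I} \not\le \widetilde{v}([\#\tilde{I}])}$ defining $X_{\widetilde{v}}$ inside $\Flag_r$. This reduces to the compatibility of the Bruhat order on $\binom{[n]}{k}$ with the shift on the block $[m,M]$, which follows from the componentwise characterization \eqref{eqn:BruhatOrderOnGr} together with the fact that $v$ acts as the identity outside the block; the common prefix $[m-1]$ contributes nothing to the comparison. Combining the ideal identity \eqref{eq:degSchubertideal} with Lemma~\ref{lem:init}, the shift then carries $\init_{\bf w}(\mathcal{I}_v)$ isomorphically onto $\init_{\bf w}(\mathcal{I}_{\widetilde{v}})$, yielding $X^a_v \cong X^a_{\widetilde{v}} \subset \Flag_r^a$.

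The main obstacle I anticipate is purely bookkeeping rather than conceptual: one must be careful that the naive concatenation $\widetilde{v} = \widetilde{s}_{i_1 - m + 1} \cdots \widetilde{s}_{i_r - m + 1}$ really is the correct image permutation for an arbitrary (not necessarily reduced, and with possibly repeated or out-of-order factors) expression $v = s_{i_1} \cdots s_{i_r}$, and that the shift on index sets interacts correctly with the sign conventions for Pl\"ucker coordinates under reordering. Since every nontrivial Pl\"ucker relation on $X_v$ already involves only index sets sharing the common prefix $[m-1]$, I expect these signs to cancel in pairs, so the identification of initial forms goes through unchanged from the proof of Lemma~\ref{lem:smallflag}; the corollary is therefore little more than a relabelling of that argument with $i$ replaced by $m-1$.
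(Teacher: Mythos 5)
Your proposal is correct and is essentially the paper's own argument: the paper gives no separate proof of this corollary, stating only that it can be deduced from the proof of Lemma~\ref{lem:smallflag}, and your write-up fills in exactly that relabelling (the shift by $m-1$, the matching of non-vanishing Pl\"ucker coordinates, of the initial forms of the Pl\"ucker relations, and of the Bruhat vanishing conditions). The only quibble is an off-by-one that you inherit from the paper's own notation: since $s_M$ moves $M+1$, the permuted block is $[m,M+1]$ rather than $[m,M]$, so the target flag variety has rank $M-m+2$, not $M-m+1$.
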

\begin{comment}
\begin{proof}
Consider a Pl\"ucker relation $R^k_{J,L}$. 
Then either $R^k_{J,L}\in (p_I)_{I\not\le v}$ or it is entirely supported on Pl\"ucker variables $p_I$ with $I\in \mathcal J_v$ as in \eqref{eq:nonzero Plueck}.
So, $\mathcal I_v$ is in fact generated by Pl\"ucker relations $R^k_{J,L}\not\in (p_I)_{I\not\le v}$ and variables $\{p_I\}_{I\not \le v}$.
Under the map $\rho$ defined in \eqref{eq:def rho} we see that $\rho(R^k_{J,L})=0$ implies $R^k_{J,L}\in (p_{I})_{I\not \le v}$.
In particular, $\rho(\mathcal I_v)=\mathcal I_{\tilde v}\subset \mathbb C[p_{\tilde I}:\tilde I\subset[r]]$.

Consider $f\in\mathcal I_v$. Without loss of generality we may assume that 
\[
f=\sum_{R^k_{J,L}\not\in (p_I)_{i\le v}} g_{k,J,L}R^k_{J,L} + \sum_{p_I: I\not\le v} p_Ig_I,
\]
for $g_I,g_{k,J,L}\in \mathbb C[p_I:I\subsetneq[n]]$.
\end{proof}
\end{comment}

\subsection{Isomorphic degenerate and original Schubert varieties}\label{sec:deg-vs-orig}
 In the following we present another instance in which a Schubert variety stays irreducible under Feigin's degeneration of $\Flag_n$. In fact, for the class of varieties we deal with in this section a stronger property holds: the degeneration process does not deform them, that is $X^a_v$ is isomorphic to the original Schubert variety $X_v$.
 
Recall that we denote by $c\in S_n$ the special Coxeter element $c=s_{n-1}s_{n-2}\cdots s_2s_1$.

\begin{proposition}\label{prop:isomschubert}
Let $v\leq c$.
Then $\mathcal{I}_v = \init_{\bf w}(\mathcal{I}_v)$.
\end{proposition}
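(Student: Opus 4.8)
The plan is to produce a common generating set for $\mathcal I_v$ and $\init_{\bf w}(\mathcal I_v)$ by showing that, modulo the monomial ideal $\mathfrak m_v:=(p_I)_{I\not\le v([\#I])}$, every Pl\"ucker relation already coincides with its own initial form. First I would record, from \eqref{eq:idealSchubert} and \eqref{eq:degSchubertideal} (the latter obtained via Lemma~\ref{lem:init}) together with the fact that $\mathcal I_{\Flag_n}$ is generated by the $R^k_{J,L}$, that
\[
\mathcal I_v=(R^k_{J,L})_{k,J,L}+\mathfrak m_v,\qquad \init_{\bf w}(\mathcal I_v)=(\init_{\bf w}(R^k_{J,L}))_{k,J,L}+\mathfrak m_v.
\]
Thus it is enough to prove $R^k_{J,L}-\init_{\bf w}(R^k_{J,L})\in\mathfrak m_v$ for every relation (which, by the observation following \eqref{eq:degflagpl�ckerlead}, I may assume satisfies \eqref{eq:degflagpl�ckerlead}). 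Granting this, each $R^k_{J,L}$ lands in $\init_{\bf w}(\mathcal I_v)$ and each $\init_{\bf w}(R^k_{J,L})$ lands in $\mathcal I_v$, yielding both inclusions at once (equivalently, one shows that $\mathcal I_v$ is homogeneous for the grading defined by $\bf w$).

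Next I would identify this difference. By the formula for $\init_{\bf w}(R^k_{J,L})$, it equals, up to sign, the sum of the \emph{removed} monomials $p_{J'}p_{L'}$ with $(J',L')\in\mathcal L_{J,L}^k$ for which $\{l_{r_1},\dots,l_{r_k}\}\cap[e,d-1]\ne\emptyset$, where $e=\#J\le d=\#L$. When $e=d$ this interval is empty and there is nothing to prove; hence I may assume $e<d$. The proposition then reduces to the single claim that every removed monomial lies in $\mathfrak m_v$, i.e.\ $F(J')\not\le v([e])$ or $F(L')\not\le v([d])$.

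To establish the claim I would argue by contradiction, supposing $F(J')\le v([e])$ and $F(L')\le v([d])$. Since $v\le c$ forces $v([e])\le c([e])$ and $v([d])\le c([d])$, the characterization \eqref{eq:coxeter} pins both sets to the special shape $F(J')=[e-1]\cup\{a\}$ with $a\ge e$ and $F(L')=[d-1]\cup\{b\}$ with $b\ge d$. Choosing $l_{r_{i_0}}\in[e,d-1]$ (available by the removal condition), I note that $l_{r_{i_0}}$ is among the entries adjoined to $J'$, hence lies in $F(J')$; being $\ge e$ it cannot be one of $1,\dots,e-1$, so $l_{r_{i_0}}=a$ and in particular $a\le d-1$. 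Then $a\in[d-1]\subseteq F(L')$, while $a=l_{r_{i_0}}$ was \emph{deleted} from $L$ in forming $L'$; therefore $a$ can reappear in $F(L')$ only as one of $j_1,\dots,j_k$. But $a\in[e,d-1]$ contradicts the leading-term normalization $\{j_1,\dots,j_k\}\cap[e,d-1]=\emptyset$ of \eqref{eq:degflagpl�ckerlead}, proving the claim and with it the proposition.

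I expect the crux to be precisely this last step, where three ingredients must lock together: the removal condition supplies a deleted index inside $[e,d-1]$; the hypothesis $v\le c$, through \eqref{eq:coxeter}, is exactly what collapses this index onto the unique ``large'' slot $a$ of $F(J')$ and thereby forces $a\le d-1$; and the normalization \eqref{eq:degflagpl�ckerlead} is what forbids $a$ from being reintroduced on the $L$-side. The subtler bookkeeping lies in confirming that one may genuinely restrict to relations satisfying \eqref{eq:degflagpl�ckerlead} and that all removed monomials are controlled uniformly; the set-theoretic manipulations with $\mathcal L_{J,L}^k$ themselves are routine once this structure is in place.
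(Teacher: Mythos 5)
Your proposal is correct and takes essentially the same approach as the paper: both reduce, via Lemma~\ref{lem:init} and the presentations \eqref{eq:idealSchubert} and \eqref{eq:degSchubertideal}, to showing that $R^k_{J,L}-\init_{\bf w}(R^k_{J,L})$ lies in $(p_I)_{I\not\le v([\#I])}$, and both kill each removed monomial using \eqref{eq:coxeter} together with the normalization $\{j_1,\dots,j_k\}\cap[e,d-1]=\emptyset$. The only cosmetic difference is that the paper shows directly that $F(L')\not\le v([d])$, since the deleted index $l_{r_{i_0}}\in[e,d-1]\subseteq[d-1]$ cannot reappear in $L'$, whereas you reach the same conclusion by contradiction after first pinning down the shape of $F(J')$.
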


\begin{proof}
Recall that
$\mathcal{I}_v=(\{p_I\}_{I \not \le v([\#I])}\cup \{R_{J,L}^k\}_{k,J,L})$
%\textcolor{red}{with initial ideal given by \eqref{eq:degSchubertideal}.\footnote{The red part should be removed.}}
%\[\init_{\bf w}(\mathcal{I}_v)=(\{p_I\}_{I \not \le v([\#I])}\cup \{\init_{\bf w}(R_{J,L}^k)\}_{k,J,L}).\]
We will show that $R^k_{J,L}-\init_{\bf w}(R^k_{J,L})\in (p_I)_{I \not \le v([\#I])}$ for all $k,J,L$. If $R^k_{J,L}=\init_{\bf w}(R^k_{J,L})$ we are done. Otherwise we have
\[
R^k_{J,L}-\init_{\bf w}(R^k_{J,L})=\sum_{
\begin{smallmatrix}
(J',L')\in \mathcal{L}_{J,L}^k\\ \{l_{r_1},\dots,l_{r_k}\}\cap [e,d-1]\not =\emptyset\\
\end{smallmatrix}
}p_{J'}p_{L'}\not =0.
\]
%with $L'=(L\setminus \{l_{r_1},\dots,l_{r_k}\})\cup \{j_1,\dots, j_k\}$ and $\{l_{r_1},\dots,l_{r_k}\}\cap [q,d-1]\not =\emptyset$ for all terms in the sum. 
We claim that in this case $L'\not \le v([d])$ holds. Note that $\{l_{r_1},\dots,l_{r_k}\}\cap [e,d-1]\not =\emptyset$ implies in particular that there exists $x\in [e,d-1]$ with $x\not \in L'=(L\setminus(l_{r_1},\dots,l_{r_k}))\cup(j_1,\dots,j_k)$. By \eqref{eq:coxeter},
\begin{eqnarray*}
v\le c \Leftrightarrow v([d])=[d-1]\cup \{b\} \text{ with } d\le b \le n
\end{eqnarray*}
it follows that $p_{L'}\in (p_I)_{I \not \le v([\#I])}$. And further, $R^k_{J,L}-\init_{\bf w}(R^k_{J,L})\in (p_I)_{I \not \le v([\#I])}$.
Hence, 
$$
\mathcal I_v = (R_{J,L}^k)_{k,J,L}+(p_I)_{I\not \le v([\#I])} = (\init_{\bf w}(R^k_{J,L}))_{k,J,L}+(p_I)_{I\not \le v([\#I])} \subseteq  \init_{\bf w}(\mathcal I_v).
$$
Consider any term order $<$ so that $\init_<(\mathcal I_v)=\init_<(\init_{\bf w}(\mathcal I_v))$. Such a term order exists as $\mathcal I_v$ is homogeneous.
Then the reduced Gr\"obner basis for $\mathcal I_v$ with respect to $<$ is also a reduced Gr\"obner basis for $\init_{\bf w}(\mathcal I_v)$ with respect to $<$. 
As reduced Gr\"obner bases are unique the claim follows.
\end{proof}

\section{Criteria for reducibility}\label{sec:reducibility}
In this section we examine when  Schubert varieties become reducible after degenerating. We give a number of sufficient conditions for certain monomials of degree two to be contained in the initial ideal $\init_{\bf w}(\mathcal{I}_w)$ for $w\in S_n$ making repeated use of \eqref{eq:containment}.

\begin{definition}
Let $w\in S_n$. A monomial $f=\prod_{J\subset [n]} \epsilon_J p_J\in \init_{\bf w}(\mathcal I_w)$, where $\epsilon_J\in\{0,1\}$, is called an \emph{honest monomial} if $f$ has degree at least 2 and $f\not\in (p_I)_{I\not \le w([\#I])}$.
\end{definition}

The following Lemma is straightforward:
\begin{lemma}\label{lem:honestmono}
Let $w\in S_n$. If $\init_{\bf w}(\mathcal I_w)$ contains an honest monomial then it is reducible.
\end{lemma}

\subsection{Relations between $\Gr(1,\mathbb C^n)$ and $\Gr(2,\mathbb C^n)$}
We start the discussion by focusing on very special Pl\"ucker relations, namely those between Pl\"ucker coordinates on $\Gr(1,\mathbb C^n)$ and on $\Gr(2,\mathbb C^n)$. In this case, we can classify the $w\in S_n$ for which $\init_{\bf w}(\mathcal{I}_w)$ contains an honest monomial of this form.
\medskip

For $v\in S_n$ denote by $\overline v$ the minimal length representative of the coset of $v$ in $S_n/\langle s_2,s_3.\dots s_{n-1}\rangle$ and $\overline{\overline v}$ the minimal length representative of the coset of $v$ in $S_n/\langle s_1,s_3,s_4,\dots, s_{n-1}\rangle$.

\begin{theorem}\label{thm:gr12}
Let $v\in S_n$ and $1<j<k\le n$. Then $\init_{\bf w}(\mathcal{I}_v)$ contains the honest monomial $p_{\{j\}}p_{\{1,k\}}$ 
if and only if $v$ satisfies
\begin{eqnarray*}
    s_{j-1}s_{j-2}\cdots s_2s_1\le\overline{v}\le s_{k-2}s_{k-3}\cdots s_2s_1 \ \text{ and } \
    s_{k-1}s_{k-2}\cdots s_3s_2\le\overline{\overline v}.
\end{eqnarray*}
\end{theorem}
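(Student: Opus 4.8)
The plan is to extract everything from the single Plücker relation between $\Gr(1,\mathbb C^n)$ and $\Gr(2,\mathbb C^n)$ whose leading monomial is $p_{\{j\}}p_{\{1,k\}}$, namely $R^1_{(j),(1,k)}$ (here $e=1$, $d=2$, $L=(1,k)$). First I would compute its initial form. Since $[e,d-1]=\{1\}$, the unique dropped term is the one exchanging the entry $1$ of $L$; equivalently a direct count with ${\bf w}$ shows the term $p_{\{1\}}p_{\{j,k\}}$ has strictly larger weight than the other two, so
\[
\init_{\bf w}(R^1_{(j),(1,k)})=p_{\{j\}}p_{\{1,k\}}-p_{\{k\}}p_{\{1,j\}}=:A-B .
\]
In parallel I would rewrite the Weyl–group hypotheses as inequalities: $\overline v$ depends only on $v(1)$ (and equals $s_{v(1)-1}\cdots s_1$, a chain refined by $v(1)$), so the first condition is $j\le v(1)\le k-1$; while $\overline{\overline v}$ depends only on $\{v(1),v(2)\}$ and $s_{k-1}\cdots s_2$ is the representative attached to $\{1,k\}$, so by \eqref{eqn:BruhatOrderOnGr} the second condition is $k\le\max(v(1),v(2))$. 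Finally I read ``$\init_{\bf w}(\mathcal I_v)$ contains the monomial $p_{\{j\}}p_{\{1,k\}}$'' as: $A$ is a monomial minimal generator, i.e. $A\in\init_{\bf w}(\mathcal I_v)$ while $p_{\{j\}}\notin\init_{\bf w}(\mathcal I_v)$ and $p_{\{1,k\}}\notin\init_{\bf w}(\mathcal I_v)$ — the sense relevant to reducibility, since if a single factor vanished the statement would be vacuous.

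For the ``if'' direction, assume $j\le v(1)\le k-1$ and $k\le\max(v(1),v(2))$. The outer inequalities $j\le v(1)$ and $k\le\max(v(1),v(2))$ say precisely $\{j\}\le v([1])$ and $\{1,k\}\le v([2])$, so $p_{\{j\}}$ and $p_{\{1,k\}}$ do not vanish. Since $v(1)<k$ gives $\{k\}\not\le v([1])$, we get $p_{\{k\}}\in(p_I)_{I\not\le v}$, hence $B\in\init_{\bf w}(\mathcal I_v)$; by \eqref{eq:degSchubertideal}, $A=\init_{\bf w}(R^1_{(j),(1,k)})+B\in\init_{\bf w}(\mathcal I_v)$.

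The ``only if'' direction is the heart of the argument. Nonvanishing of the two factors already yields $j\le v(1)$ and $k\le\max(v(1),v(2))$, so only $v(1)<k$ must be produced. The mechanism is that $\init_{\bf w}(\mathcal I_v)$ is multihomogeneous for the $\mathbb Z^{n-1}$–grading recording the degree in each $\Gr(i,\mathbb C^n)$ (every generator in \eqref{eq:degSchubertideal} is), so membership of $A$ may be tested in its own multidegree, where the corresponding graded piece is spanned by the degenerate $(1,2)$–relations $\init_{\bf w}(R^1_{(a),(b,c)})$ ($a<b<c$) together with the vanishing quadratic monomials $p_{\{x\}}p_{\{y,z\}}$ ($\{x\}\not\le v([1])$ or $\{y,z\}\not\le v([2])$). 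I would then organise these relations as an exchange graph: the trinomial ones ($1\notin\{a,b,c\}$) involve only monomials whose size–$2$ index avoids $1$, whereas the binomial ones $\init_{\bf w}(R^1_{(1),(b,c)})=-(p_{\{b\}}p_{\{1,c\}}-p_{\{c\}}p_{\{1,b\}})$ pair monomials whose size–$2$ index contains $1$; a short check then shows that $A$ and $B$ occur together in exactly one relation, $A-B$, and that neither occurs in any other. Projecting this graded piece onto $\operatorname{span}(A,B)$ forces: if $A\in\init_{\bf w}(\mathcal I_v)$ with $A$ nonvanishing, then $B$ must itself be a vanishing monomial, i.e. $\{k\}\not\le v([1])$ or $\{1,j\}\not\le v([2])$. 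The latter is impossible since $j\le v(1)\le\max(v(1),v(2))$, so $k>v(1)$, which together with the above yields $j\le v(1)<k$ and $k\le\max(v(1),v(2))$, as required.

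I expect the main obstacle to be precisely this last direction: excluding that some combination of \emph{all} Plücker relations, rather than $R^1_{(j),(1,k)}$ alone, puts $A$ into the ideal when $v(1)\ge k$. The multigrading is what tames it, reducing the question to finite-dimensional linear algebra in one multidegree; the residual difficulty is the bookkeeping showing that the connected component of $A$ in the exchange graph is the single edge $A-B$. Here the in–place substitution convention defining $R^k_{J,L}$ (and hence the exact signs) must be tracked carefully, as it is what guarantees that $R^1_{(1),(j,k)}$ and $R^1_{(j),(1,k)}$ give the \emph{same} relation up to sign and that no spurious relation of shape $A+B$ appears.
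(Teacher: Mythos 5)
Your proof is correct, and its core is the same as the paper's: both isolate the unique degenerate Pl\"ucker relation between $\Gr(1,\mathbb C^n)$ and $\Gr(2,\mathbb C^n)$ whose initial form is the binomial $p_{\{j\}}p_{\{1,k\}}-p_{\{k\}}p_{\{1,j\}}$, and both translate the Bruhat conditions on $\overline v$ and $\overline{\overline v}$ into $j\le v(1)\le k-1$ and $k\le \max(v(1),v(2))$, that is, into (non)vanishing of the four Pl\"ucker coordinates involved; you also adopt the same (necessary) reading of ``contains the monomial'', namely that neither factor lies in the ideal. Where you genuinely go beyond the paper is the ``only if'' direction: the paper tacitly assumes that $p_{\{j\}}p_{\{1,k\}}$ can enter $\init_{\bf w}(\mathcal I_v)$ only through this one relation degenerating to a monomial, and instead devotes its effort to ruling out the symmetric scenario \eqref{eq:critv2} (the relation degenerating to the other monomial $p_{\{k\}}p_{\{1,j\}}$) by a Bruhat computation. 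Your multigrading argument --- restrict to multidegree $(1,1,0,\dots,0)$, note that the trinomial relations only involve $2$-element index sets avoiding $1$ while each binomial relation pairs $p_{\{b\}}p_{\{1,c\}}$ with $p_{\{c\}}p_{\{1,b\}}$, then project onto the two coordinates of $A$ and $B$ --- supplies exactly the justification the paper omits, and it renders the case-\eqref{eq:critv2} analysis unnecessary: once $p_{\{j\}}\ne 0$ the coordinate $p_{\{1,j\}}$ cannot vanish, so $B$ vanishing forces $p_{\{k\}}=0$, i.e.\ $v(1)<k$. In short: same route, with the one genuinely delicate step carried out in full.
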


The conditions on $\overline{v}$ and $\overline{\overline{v}}$ in Theorem~\ref{thm:gr12} are depicted for $S_4$ with $j=2,k=4$ in Figure~\ref{fig:thmgr12}.

\begin{center}
\begin{figure}[h]
\begin{tikzpicture}[scale=.8]
\node at (0,0){\tiny$1$};
\node[magenta] at (0,1){\tiny$2=s_1(1)$};
\node[magenta] at (0,2){\tiny$3=s_2s_1(1)$};
\node at (0,3){\tiny$4$};

\draw (0,.25)-- (0,.75);
\draw[magenta] (0,1.25) -- (0,1.75);
\draw (0,2.25) -- (0,2.75);

\begin{scope}[xshift= 5cm]

\node at (0,0){\tiny$\{1,2\}$};
\node at (0,1){\tiny$\{1,3\}$};
\node[blue] at (-1.75,2){\tiny$s_3s_2(\{1,2\})=\{1,4\}$};
\node at (1,2){\tiny$\{2,3\}$};
\node[blue] at (0,3){\tiny$\{2,4\}$};
\node[blue] at (0,4){\tiny$\{3,4\}$};

\draw (0,.25) -- (0,.75);
\draw (.25,1.25) -- (.75,1.75);
\draw (-.25,1.25) -- (-.75,1.75);
\draw[blue] (-.75,2.25) -- (-.25,2.75);
\draw (.75,2.25) -- (.25,2.75);
\draw[blue] (0,3.25) -- (0,3.75);

\end{scope}
\end{tikzpicture}
    \caption{The Bruhat posets of $\Gr(1,\mathbb C^4)$ and $\Gr(2,\mathbb C^4)$ with intervals given by \textcolor{magenta}{$s_1\le \overline{v}\le s_2s_1$} and \textcolor{blue}{$s_3s_2\le \overline{\overline{v}}$} as in Theorem~\ref{thm:gr12} for $j=2,k=4$.}
    \label{fig:thmgr12}
\end{figure}    
\end{center}

\vspace{-1cm}
\begin{proof}
To simplify notation, for $a\in[n]$ we denote $p_a:=p_{(a)}$, and for $a,b\in[n]$ we write $p_{a,b}$ instead of $p_{(a,b)}$. We will only consider Pl\"ucker coordinates corresponding to increasing sequences in this proof and hence adapt the signs.

Consider for $1\le i<j<k\le n$ the Pl\"ucker relation
$R^1_{(i),(j,k)}=p_ip_{j,k}-p_jp_{i,k}+p_{k}p_{i,j}$. Note that if $\init_{\bf w}(R^1_{(i),(j,k)})=R^1_{(i),(j,k)}$ the relation will not produce an honest monomial in $\init_{\bf w}(\mathcal{I}_w)$ for any $w\in S_n$ as $\mathcal{I}_w$ {is prime}. 
Note that $R^1_{(i),(j,k)}\not =\init_{\bf w}(R^1_{(i),(j,k)})$ only if $i=1$. 
In this case
\[
\init_{\bf w}(p_1p_{j,k}-p_jp_{1,k}+p_{k}p_{1,j})=-p_jp_{1,k}+p_{k}p_{1,j}.
\]
As $j<k$, if $p_j$ vanishes on the Schubert variety $X_v$, then so does $p_k$. Hence, both monomials are zero on $X_v$. Similarly, if $p_{1,j}$ vanishes on $X_v$, then so does $p_{1,k}$. 
Our aim is to determine $v\in S_n$ such that one of the two terms of $\init_{\bf w}(R^1_{(i), (j,k)})$ lies in $(p_I)_{I\not\leq v([\#I])}$ but the other does not as in this case, the ideal $\init_{\bf w}(\mathcal{I}_v)$ contains an honest monomial.
%and we deduce that $X^a_v$ is reducible. 
A priori, there are two cases for the restriction of $p_k$ and $p_{1,k}$ to $X_v$:
\begin{enumerate}
    \item $p_{1,k}\not =0$ and $p_k=0$,
    \item $p_{1,k}=0$ and $p_{k} \not=0$.
\end{enumerate}
We will show that in fact the second case can never happen. Both cases yield conditions on $\overline{v}$ and $\overline{\overline v}$ (keeping also in mind that we do not want $p_j$ and $p_{1,j}$ to vanish). In the first case we have the following conditions
\begin{eqnarray}\label{eq:critv1}
    s_{j-1}s_{j-2}\cdots s_2s_1\le\overline{v}\le s_{k-2}s_{k-3}\cdots s_2s_1 \ \text{ and } \
    s_{k-1}s_{k-2}\cdots s_3s_2\le\overline{\overline v},
\end{eqnarray}
respectively, in the second case we have
\begin{eqnarray}\label{eq:critv2}
    s_{k-1}s_{k-2}\cdots s_2s_1\le\overline{v}\ \text{ and }  \
    s_{j-1}s_{j-2}\cdots s_3s_2\le\overline{\overline v}\le s_{k-2}s_{k-3}\cdots s_3s_2.
\end{eqnarray}    
Assume $v\in S_n$ is chosen such that the minimal length representatives of the cosets fulfill the inequalities in \eqref{eq:critv2}. Then
\[
s_{k-1}s_{k-2}\cdots s_2 s_1\le v\le s_{k-2}\cdots s_2 x
\]
for some $x\in \langle s_1,s_3,\dots,s_{n-1}\rangle$. Observe that $s_{k-1}\cdots s_1(1)=k$ and 
\begin{equation*}
s_{k-2}\cdots s_2 x(1) =\left\{
\begin{array}{ll}
1 &\text{if }s_1x>x\\
k-1 & \text{if } s_1x<x.
\end{array}\right.
\end{equation*}
With the notation as in \eqref{eq:w[i,j]} this implies 
$(s_{k-1}\cdots s_1)^{1,k}=1> (s_{k-2}\cdots s_2 x)^{1,k}=0$.
But $s_{k-1}\cdots s_1 \le s_{k-2}\cdots s_2 x$, contradicting \eqref{eq:BBequiv}. Hence, case \eqref{eq:critv2} never applies.
\end{proof}

\begin{remark}\label{rem:sl3_case}
Theorem~\ref{thm:gr12} is enough to detect all Schubert varieties in $\Flag_3\hookrightarrow \Gr(1,\mathbb C^3)\times \Gr(2,\mathbb C^3)$ which become reducible under Feigin's degeneration. In fact, the only Schubert variety having this property is the one indexed by $s_1s_2$. All the other permutations but the longest element (which indexes the Schubert variety corresponding to the irreducible variety $\Flag_n^a$) are $\leq c=s_2s_1$ and hence, by Proposition \ref{prop:isomschubert}, are irreducible.
\end{remark}

\subsection{Monomials from other relations}
Theorem~\ref{thm:alllemma}~(1) to (5) provide sufficient conditions on $w\in S_n$ for the initial ideal $\init_{\bf w}(\mathcal{I}_w)$ to contain a degree two honest monomial originating from a Pl\"ucker relation between Pl\"ucker coordinates on adjacent Grassmannians, that is $\Gr(k,\mathbb C^n)$ and on $\Gr(k+1,\mathbb C^n)$ for suitable $k$. Notice that here we are only producing sufficient conditions, so that for $k=1$ we clearly obtain a weaker result than Theorem \ref{thm:gr12}.  
Theorem~\ref{thm:alllemma}~(6) and (7) deal with Pl\"ucker relations between not necessarily adjacent Grassmannians.

Table~\ref{tab:S4} (resp. Table~\ref{tab:S5} in the appendix) show to which permutations $w\in S_4$ (resp. $S_5$) each one of the points of Theorem \ref{thm:alllemma} applies. The computations for these were performed in \emph{Sage}\cite{sagemath} and \emph{Macaulay2}\cite{M2}.

Let $w\in S_n$. In the following, it will be convenient to set  $w([0]):=\emptyset$. Moreover, since $\init_{\textbf w}(\mathcal{I}_{e})=\mathcal{I}_e$, we can exclude the case $w=e$ right away in the following theorem.

\begin{theorem}\label{thm:alllemma}
Let $w\in S_n\setminus\{e\}$. If one of the following conditions holds for $w$, then $\init_{\bf w}(\mathcal{I}_w)$ contains an honest monomial of degree 2:
\begin{enumerate}
\item \label{lem:monomial1} 
    there exist $i\in [n-1]$ with $ws_i>w$ and $j\in [n]$ such that 
    \[
    i,j\le w(i), i\not=j\text{ and }i,j\not\in w([i-1])\cup \{w(i+1)\};
    \]
\item \label{lem:monomial2} 
     there exist $i\in[3,n-1]$ with $ws_i>w$ and $l,x\in[n]$ with $x\not=i-1, l\le w(i)$ and $w(i+1)\le x,i-1$, such that
    \[
    i-1,x\in w([i-1])\cup\{w(i+1)\}\text{ and }l\not\in w([i-1])\cup\{w(i+1)\};
    \]
\item \label{lem:monomial3} 
     there exist $j\in[2,n-1]$ with $s_jw>w$ and $i\in[n-1],i<j$ such that
    \begin{eqnarray*}
     j\in w([i]), i\not \in w([i]),
    \text{ and } j+1\le w(i+1);
    \end{eqnarray*}
\item \label{lem:monomial4}
     there exists $i\in[n-2]$ with $s_iw<w$ and $j\in [n]$ such that \[
    i,j\not \in w([i+1]), j\le w(i+2), i+1 \in w([i+1]) \text{ and } i+1<j;
    \]    
\item \label{lem:monomial5}
     there exist $i\in[2,n-1]$ and $l\in[2,n], l>i$ with
    \[
    i\not\in w([i+1]),l\in w([i]),l>w(i+1) \text{ and  } i>w(i+1);
    \]    
\item \label{lem:easylemma}
 for $i\in[n]$, minimal with $w(i)\not=i$, it holds $w(i)<n$ and, for the minimal $j\in[i+1,n-1]$ such that $w(j)>w(i)$, it holds
$
 w(i)\not \in [j-1];
 $
\item \label{lem:w(i)=n}
      for $i\in[n]$, minimal with $w(i)\not=i$, it holds $w(i)=n$ and, for the minimal $j\in[i+2, n-1]$, such that  $w(j)>w(i+1)$, it holds $i\not\in w([i+1,j-1])$.
\end{enumerate}

\end{theorem}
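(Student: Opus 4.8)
The plan is to reduce all seven implications to a single, uniform criterion and then verify that criterion case by case. By \eqref{eq:degSchubertideal},
\[
\init_{\bf w}(\mathcal{I}_w)=\bigl(\init_{\bf w}(R^k_{J,L})\bigr)_{k,J,L}+(p_I)_{I\not\le w([\#I])}.
\]
Hence it suffices, for each $w$ obeying one of (1)--(7), to produce a single triple $(k,J,L)$ such that the reduction of $\init_{\bf w}(R^k_{J,L})$ modulo $(p_I)_{I\not\le w([\#I])}$ is a single nonzero monomial $m$: indeed then $\init_{\bf w}(R^k_{J,L})-(\pm m)$ lies in $(p_I)_{I\not\le w([\#I])}$, so $m\in\init_{\bf w}(\mathcal{I}_w)$. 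Concretely, writing $\init_{\bf w}(R^k_{J,L})=p_Jp_L-\sum p_{J'}p_{L'}$, I must arrange that exactly one of the terms $p_Jp_L,\ p_{J'}p_{L'}$ has both index sets $\le w$ while all the others have at least one index set $\not\le w$. Two tools drive every verification: the Grassmannian Bruhat test \eqref{eqn:BruhatOrderOnGr}, read as ``$I\le w([d])$ iff the increasing rearrangement of $I$ is dominated coordinatewise by that of $w([d])$'', and the descent dictionary $ws_i>w\Leftrightarrow w(i)<w(i+1)$, $s_jw>w\Leftrightarrow w^{-1}(j)<w^{-1}(j+1)$.

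For the adjacent cases (1)--(5) I take $k=1$ and $\#L=\#J+1$, so the terms of $\init_{\bf w}(R^1_{J,L})$ come from exchanging a single entry between $J$ and $L$, subject to the support restriction $l_r\notin[e,d-1]$ recorded just before the definition of $\Flag_n^a$. A first simplification is that any exchange producing a repeated index contributes $0$, so the genuine competitors of the chosen monomial are few. The combinatorial data then dictate $J$ and $L$: the bounds $i,j\le w(i)$ (resp.\ $l\le w(i)$) guarantee that the index set built from these small values sits below the relevant $w([d])$, so one monomial survives, while the membership clauses $i,j\notin w([i-1])\cup\{w(i+1)\}$ (and their analogues, including the complementary membership conditions appearing in (2) and (4)) are exactly what forces every surviving exchange to push the increasing rearrangement of $L'$ (or $J'$) one step above that of $w([d])$ (resp.\ $w([e])$). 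The descent hypothesis $ws_i>w$ or $s_jw>w$ fixes the relative order of the critical entries $w(i),w(i+1)$, which is what makes this overshoot simultaneous for all admissible exchanges.

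For (6) and (7) the two Grassmannians need not be adjacent, and I read the relation off the initial staircase of $w$: letting $i$ be minimal with $w(i)\ne i$ we have $w(1)=1,\dots,w(i-1)=i-1$ and $w(i)>i$, and the index $j$ of the hypothesis marks the first later position whose value exceeds $w(i)$ (resp.\ $w(i+1)$). Here I would use a relation exchanging the large value $w(i)$ against a staircase entry, chosen so that the competitor term reinstates a value which the inequality $w(i)\not\in[j-1]$ in (6) --- respectively the non-membership $i\notin w([i+1,j-1])$ in (7), where $w(i)=n$ is maximal and must be handled separately --- prevents from staying $\le w([j])$. These bounds are precisely the coordinatewise violations needed to kill every competitor but one.

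The main obstacle is the uniqueness of the surviving monomial. A naive choice of $J,L$ typically leaves \emph{two} surviving terms --- the reduced relation is then a binomial rather than a monomial --- so the hypotheses must be used not only to keep one monomial alive but to eliminate every competitor at once; this is where the two distinct values $i\ne j$ in (1), and the paired membership conditions in (2) and (5), are indispensable, and where the descent conditions earn their place by rigidly ordering the critical entries of $w$. Once the relation is correctly chosen, each individual elimination is a finite coordinatewise check against \eqref{eqn:BruhatOrderOnGr}, and one must only record in passing that the retained monomial genuinely occurs --- i.e.\ that the leading-term nonvanishing condition $\{j_1,\dots,j_k\}\cap[e,d-1]=\emptyset$ holds when $p_Jp_L$ is kept, or that the surviving exchange satisfies $l_r\notin[e,d-1]$ when a correction term is kept. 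The bulk of the work is thus bookkeeping, concentrated in (2), (5) and (7).
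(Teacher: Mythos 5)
Your strategy coincides with the paper's: for each hypothesis, exhibit one Pl\"ucker relation $R^1_{J,L}$ whose initial form, reduced modulo $(p_I)_{I\not\le w([\#I])}$, is a single surviving monomial $p_Jp_L$ --- using the large overlap of $J$ and $L$ to limit the exchange terms, the support condition $\{l_{r_1},\dots,l_{r_k}\}\cap[e,d-1]=\emptyset$ to discard some of them, and the Bruhat test \eqref{eqn:BruhatOrderOnGr} to kill the rest. The problem is that you never actually produce the pair $(J,L)$ for any of the seven cases, and those choices are the entire content of the proof: the assertion that ``the combinatorial data dictate $J$ and $L$'' is not substantiated, and in several cases the correct choice is genuinely non-obvious. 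For instance, in case (1) the paper takes $F(J)=w([i-1])\cup\{j\}$ with $j_1=j$ and $F(L)=w([i-1])\cup\{i,w(i+1)\}$: the swap of $j$ with $i$ is removed by the initial form (since $i\in[e,d-1]=\{i\}$), while the swap with $w(i+1)$ dies on $X_w$ because $ws_i>w$ forces $w(i+1)>w(i)$, hence $w([i-1])\cup\{w(i+1)\}\not\le w([i])$; the hypotheses $i,j\le w(i)$ and $i,j\notin w([i-1])\cup\{w(i+1)\}$ are what make $J,L$ honest index sets with $p_Jp_L$ not vanishing on $X_w$. In case (2) the witness is $F(J)=(w([i-1])\cup\{w(i+1)\})\setminus\{i-1\}$ with $j_1=x$ and $F(L)=(w([i-1])\cup\{w(i+1),l\})\setminus\{x\}$, which one does not read off the hypotheses without real work. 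Until such witnesses are written down and the eliminations checked in each of the seven cases, what you have is a proof plan, not a proof.

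A secondary issue is that your account of how the hypotheses act is not always accurate. In case (6) the surviving competitor $p_{(w(j),[i-1])}\,p_{(L\setminus(w(j)))\cup(w(i))}$ is killed because $[i-1]\cup\{w(j)\}\not\le w([i])$ (as $w(j)>w(i)$), not because some index set fails to be $\le w([j])$; the hypothesis $w(i)\notin[j-1]$ is used earlier, to guarantee that $[j-1]\cup\{w(j)\}$ really has cardinality $j$ and that the only exchange surviving the initial form is the one with $w(j)$. Such misattributions are harmless in a sketch but signal that the case-by-case verifications, which you defer as ``bookkeeping,'' have not actually been carried out.
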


\begin{proof}

\noindent
\begin{enumerate}
\item  
    Assume there exist $i,j$ fulfilling the conditions above. Let $J$ be any sequence such that  $F(J)=w([i-1])\cup\{j\}$ and $j_1=j$, and let $L$ be any sequence such that $F(L)=w([i-1])\cup\{i,w(i+1)\}$. Then the Pl\"ucker relation $R^1_{J,L}$ equals
    \begin{eqnarray*}
    p_{J}p_{L}-p_{(J\setminus (j))\cup(i)}p_{(L\setminus(i))\cup(j)} -p_{(J\setminus(j))\cup (w(i+1))} p_{(L\setminus(w(i+1)))\cup(j)}.
    \end{eqnarray*}
    Taking the initial form with respect to ${\bf w}$ we obtain
    \begin{eqnarray*}
    \init_{\bf w}(R^1_{J,L})=p_{J}p_{L} -p_{(J\setminus(j))\cup (w(i+1))} p_{(L\setminus(w(i+1)))\cup(j)}.
    \end{eqnarray*}
    Restricting to $X_w$, we have $p_{(J\setminus(j))\cup (w(i+1))}=p_{(w([i-1]),w(i+1))}=0$ as $ws_i>w$ and so $\init_{\bf w}(\mathcal I_w)$ contains the monomial $p_Jp_L$.
\item
    Assume such $i,l,x$ exist. Let $J$ be any sequence such that $F(J)=(w([i-1])\cup\{ w(i+1)\})\setminus\{i-1\}$ and $j_1=x$, and let $L$ be any sequence such that $F(L)=(w([i-1])\cup\{w(i+1),l\})\setminus\{x\}$ the Pl\"ucker relation $R^1_{J,L}$, i.e.
    \begin{eqnarray*}
    p_{J}p_{L} -p_{(J\setminus(x))\cup(i-1)}
    p_{(L\setminus(i-1))\cup(x)}- p_{(J\setminus(x))\cup(l)}
    p_{(L\setminus(l))\cup(x)}.
    \end{eqnarray*}
    Taking the initial form with respect to ${\bf w}$ we obtain
    \begin{eqnarray*}
    \init_{\bf w}(R^1_{J,L})= p_{J}p_{L} - p_{(J\setminus(x))\cup(l)}
    p_{(L\setminus(l))\cup(x)}.
    \end{eqnarray*}
    Note that $(F(L)\setminus\{l\})\cup\{x\}=w([i-1])\cup\{w(i+1)\}$ and so restricting to $X_w$ we have $p_{(L\setminus(l))\cup(x)}=0$ as $ws_i>w$. So $\init_{\bf w}(\mathcal I_w)$ contains the monomial $p_Jp_L$.
\item
    Assume such $i$ and $j$ exist and take $J$ any sequence such that $F(J)=w([i])$ and $j_1=j$, and  $L$ any sequence such that $F(L)=(w([i])\cup\{i,j+1\})\setminus\{j\}$. Note that $j\in w([i])$ and $s_jw>w$ imply $ j+1\not\in w([i+1])$. Then
    \begin{eqnarray*}
    R^1_{J,L}= p_{J}p_{L} -p_{(J\setminus(j))\cup(i)}p_{(L\setminus(i))\cup(j)} -p_{(J\setminus(j))\cup(j+)}p_{(L\setminus(j+1))\cup(j)}.
    \end{eqnarray*}
    Taking the initial form with respect to ${\bf w}$ we obtain
    \begin{eqnarray*}
    \init_{\bf w}(R^1_{J,L})= p_{J}p_{L}- p_{(J\setminus(j))\cup(j+1)}p_{(L\setminus(j+1))\cup(j)}.
    \end{eqnarray*}
    As $(J\setminus(j))\cup(j+1)\not\leq w([\#J])$ restricting to $X_w$ we have $p_{(w([i])\setminus(j))\cup(j+1)}=0$. Hence, $\init_{\bf w}(\mathcal I_w)$ contains the monomial $p_Jp_L$.
\item   
    Assume such $i$ and $j$ exist and consider $L$ any sequence such that $F(L)=w([i+1])\cup\{j\}$, and $J$ any sequence such that $F(J)=s_iw([i+1])=(w([i+1])\setminus\{i+1\})\cup\{i\}$ and $j_1=i$. Then 
    \begin{eqnarray*}
    R^1_{J,L}= p_{J}p_{L} -p_{(J\setminus(i))\cup(i+1)} p_{(L\setminus(i+1))\cup(i)} - p_{(J\setminus(i))\cup(j)}p_{(L\setminus(j))\cup(i)}
    \end{eqnarray*}
    Taking the initial form with respect to ${\bf w}$ yields
    \begin{eqnarray*}
    \init_{\bf w}(R^1_{J,L})= p_{J}p_{L} - p_{(J\setminus(i))\cup(j)}p_{(L\setminus(j))\cup(i)}
    \end{eqnarray*}
    Now $(J\setminus(i))\cup(j)=(w([i+1])\setminus(i+1))\cup(j)$, but restricting to $X_w$ we have $p_{(J\setminus(i))\cup(j)}=0$ as $j>i+1$. Hence, $\init_{\bf w}(\mathcal I_w)$ contains the monomial $p_Jp_L$. 
\item
    Assume such $i,l$ exist, take $J=w([i])$ and $L=(w([i+1])\setminus\{l\})\cup\{i\}$. Consider the relation $R^1_{J,L}$:
    \begin{eqnarray*}
     p_{J}p_{L} -p_{(J\setminus(l))\cup(i)}p_{(L\setminus(i))\cup(l)} - p_{(J\setminus(l))\cup(w(i+1))}p_{(L\setminus(w(i+1)))\cup(l)}.
    \end{eqnarray*}
    Taking the initial form with respect to ${\bf w}$ yields
    \begin{eqnarray*}
    \init_{\bf w}(R^1_{J,L}) = p_{J}p_{L} - p_{(J\setminus(l))\cup(w(i+1))}p_{(L\setminus(w(i+1)))\cup(l)}.
    \end{eqnarray*}
    Restricting to $X_w$ we have $(F(L)\setminus\{w(i+1)\})\cup\{l\}=(F(w([i+1])\setminus\{w(i+1)\})\cup\{i\}$ and $p_{(w([i+1])\setminus(w(i+1)))\cup(i)}=0$ as $i>w(i+1)$. So $\init_{\bf w}(\mathcal I_w)$ contains the monomial $p_Jp_L$.
\item    
    First note that {as $w\not= e$ we have that} $w(i)\not =i$ in particular implies $i<n$. Consider $J$ any sequence such that $F(J)=w([i])=[i-1]\cup\{w(i)\}$ with $j_1=w(i)$. Let $L$ be any sequence such that $F(L)=[j-1]\cup \{w(j)\}$. As $w(i)\not \in [j-1]$ implies $w(i)>j-1$ and so $w(j)>w(i)>j-1$, then the set $[j-1]\cup \{w(j)\}$ has cardinality $j$.
    So, 
    \begin{eqnarray*} \quad \quad
    R^1_{J,L}= p_{J}p_{L}-p_{(w(j),[i-1])} p_{(L\setminus(w(j)))\cup(w(i))} -\sum_{r\in[i,j-1]}p_{(r,[i-1])} p_{(L\setminus(r))\cup(w(i))}.
    \end{eqnarray*}
    Taking the initial form with respect to ${\bf w}$ yields
    \begin{eqnarray*}
    \init_{\bf w}(R^1_{J,L})=p_{J}p_{L}-p_{(w(j),[i-1])} p_{(L\setminus(w(j)))\cup(w(i))}.
    \end{eqnarray*}
    Since $w(j)>w(i)$, the coordinate $p_{(w(j),[i-1])}$ vanishes in the coordinate ring of $X_w$, so that $\init_{\bf w}(R^1_{J,L})\in\init_{\bf w}(\mathcal{I}_w)$ is a monomial.
\item   
    Consider $J$ any sequence such that $F(J)=[i]\cup\{n\}=w([i])\cup \{i\}$ such that $j_1=i$, and let $L$ be any sequence such that $F(L)=[i-1]\cup[i+1,j-1]\cup\{w(j),n\}$. Note that $L\le w([j])$ as $i\not\in w([i+1,j-1])$, and hence we get
    \[
    R^1_{J,L}= p_{J}p_{L}-p_{(w(j),w([i]))} p_{(L\setminus(w(j)))\cup(i)}
 -\sum_{r\in[i+1,j-1]}p_{(r,w([i]))} p_{(L\setminus(r))\cup(i)}
    \]
    with initial term $\init_{\bf w}(R^1_{J,L})=p_{J}p_{L}-p_{(w(j),w([i]))} p_{(L\setminus(w(j)))\cup(i)}$. 
    Further observe that $w(j)>w(i+1)\ge i$, which implies that $p_{(w(j),w([i]))}$ vanishes in the coordinate ring of $X_w$. Then $R^1_{J,L}$ produces a monomial.
\end{enumerate}
\end{proof}

\begin{remark}
In principle, we could have assumed $i\in\{2, 3, \ldots, n-1\}$ in Theorem~\ref{thm:alllemma}~(2). Instead, we exclude the case $i=2$, since it never happens under the other assumptions, for which we would have $w(3)\le 1$ and $ws_2(2)=w(3)>w(2)$ contradicting each other.
\end{remark}

\begin{remark}In the points (6) and (7) of  Theorem~\ref{thm:alllemma}, such a $j$ need not exists, in which case the  criterion would
simply not apply.
\end{remark}

\subsubsection{Efficiency of the various criteria from Theorem~\ref{thm:alllemma}.} 
We want to comment here on how efficient the various criteria of Theorem~\ref{thm:alllemma} are, based on the data we have collected for $S_4$ (see Table~\ref{tab:S4}) and $S_5$ (see Table~\ref{tab:S5}). The data can be found at the homepage:
\url{https://www.matem.unam.mx/~lara/schubert/}.
 
For $n=4$, there are 11 permutations $w$ such that at least one Pl\"ucker relation degenerates to a monomial. In the $S_5$-case, this happens for 85 permutations. 

 Among the criteria collected in Theorem~\ref{thm:alllemma},  point (6) seems to be the most powerful: it detects 9 out of  11 permutations  for $S_4$, and 65 out of 85 for $S_5$. To cover the missing two permutations for $S_4$ it is enough to combine  Theorem~\ref{thm:alllemma}~(6) with one of the points (1),(4),(7) and one between (2) and (5). So that it is enough to apply three of our criteria to find all $w\in S_4$ such that $\init_{\textbf w}(\mathcal{I}_w)$ contains a Pl\"ucker relation which degenerates to a monomial.  
 
 Theorem~\ref{thm:alllemma} (1) picks 9 out of  11 permutations in $S_4$, and 64 out of 85 for $S_5$.

Theorem~\ref{thm:alllemma}~(3) covers 8 out of 11 permutations yielding monomial initial ideals for $S_4$ and 57 out of 85 for $S_5$.

Theorem~\ref{thm:alllemma}~(4) detects 4 permutations for $S_4$ and 36 permutations for $S_5$.

Theorem~\ref{thm:alllemma}~(2) and (5) both finds 2 permutations for $n=4$ and 22 for $n=5$, but the elements they see are different.

Finally, Theorem~\ref{thm:alllemma}~(7) applies to only one permutation, resp. 8 permutations,  in the $n=4$, resp. $n=5$, case, but it is necessary to cover all the permutations in $S_5$ containing monomial degenerate Pl\"ucker relations. For example, it is the only one among our criteria which  can be applied to 
$s_1s_2s_3s_4s_3s_1s_2s_1$.
%%%%%%%%%%%%%%%%%%%%%%%%%%%%%%%%%%%%%%%%%%%%%%%%%%%%%%%

\begin{table}
\begin{tabular}{ll | c | lllllll}
$w$ one-line & $w$ red. word & mono & (1) & (2) & (3) & (4) & (5) & (6)& (7) \\ \hline
$[1, 2, 3, 4]$ & $1$ & $-$ & $-$ & $-$ & $-$ & $-$ & $-$ & $-$ & $-$ \\
$[1, 2, 4, 3]$ & $s_{3}$ & $-$ & $-$ & $-$ & $-$ & $-$ & $-$ & $-$ & $-$ \\
$[1, 3, 2, 4]$ & $s_{2}$ & $-$ & $-$ & $-$ & $-$ & $-$ & $-$ & $-$ & $-$ \\
$[1, 3, 4, 2]$ & $s_{2}s_{3}$ & $\times$ & $\times$ & $-$ & $\times$ & $-$ & $-$ & $\times$ & $-$ \\
$[1, 4, 2, 3]$ & $s_{3}s_{2}$ & $-$ & $-$ & $-$ & $-$ & $-$ & $-$ & $-$ & $-$ \\
$[1, 4, 3, 2]$ & $s_{2}s_{3}s_{2}$ & $-$ & $-$ & $-$ & $-$ & $-$ & $-$ & $-$ & $-$ \\
$[2, 1, 3, 4]$ & $s_{1}$ & $-$ & $-$ & $-$ & $-$ & $-$ & $-$ & $-$ & $-$ \\
$[2, 1, 4, 3]$ & $s_{3}s_{1}$ & $-$ & $-$ & $-$ & $-$ & $-$ & $-$ & $-$ & $-$ \\
$[2, 3, 1, 4]$ & $s_{1}s_{2}$ & $\times$ & $\times$ & $-$ & $\times$ & $-$ & $-$ & $\times$ & $-$ \\
$[2, 3, 4, 1]$ & $s_{1}s_{2}s_{3}$ & $\times$ & $\times$ & $-$ & $\times$ & $\times$ & $-$ & $\times$ & $-$ \\
$[2, 4, 1, 3]$ & $s_{3}s_{1}s_{2}$ & $\times$ & $\times$ & $-$ & $\times$ & $-$ & $-$ & $\times$ & $-$ \\
$[2, 4, 3, 1]$ & $s_{1}s_{2}s_{3}s_{2}$ & $\times$ & $\times$ & $-$ & $\times$ & $\times$ & $-$ & $\times$ & $-$ \\
$[3, 1, 2, 4]$ & $s_{2}s_{1}$ & $-$ & $-$ & $-$ & $-$ & $-$ & $-$ & $-$ & $-$ \\
$[3, 1, 4, 2]$ & $s_{2}s_{3}s_{1}$ & $\times$ & $-$ & $-$ & $\times$ & $-$ & $-$ & $\times$ & $-$ \\
$[3, 2, 1, 4]$ & $s_{1}s_{2}s_{1}$ & $-$ & $-$ & $-$ & $-$ & $-$ & $-$ & $-$ & $-$ \\
$[3, 2, 4, 1]$ & $s_{1}s_{2}s_{3}s_{1}$ & $\times$ & $\times$ & $-$ & $-$ & $\times$ & $-$ & $\times$ & $-$ \\
$[3, 4, 1, 2]$ & $s_{2}s_{3}s_{1}s_{2}$ & $\times$ & $\times$ & $\times$ & $\times$ & $-$ & $\times$ & $\times$ & $-$ \\
$[3, 4, 2, 1]$ & $s_{1}s_{2}s_{3}s_{1}s_{2}$ & $\times$ & $\times$ & $-$ & $\times$ & $-$ & $-$ & $\times$ & $-$ \\
$[4, 1, 2, 3]$ & $s_{3}s_{2}s_{1}$ & $-$ & $-$ & $-$ & $-$ & $-$ & $-$ & $-$ & $-$ \\
$[4, 1, 3, 2]$ & $s_{2}s_{3}s_{2}s_{1}$ & $-$ & $-$ & $-$ & $-$ & $-$ & $-$ & $-$ & $-$ \\
$[4, 2, 1, 3]$ & $s_{3}s_{1}s_{2}s_{1}$ & $-$ & $-$ & $-$ & $-$ & $-$ & $-$ & $-$ & $-$ \\
$[4, 2, 3, 1]$ & $s_{1}s_{2}s_{3}s_{2}s_{1}$ & $\times$ & $\times$ & $-$ & $-$ & $\times$ & $-$ & $-$ & $\times$ \\
$[4, 3, 1, 2]$ & $s_{2}s_{3}s_{1}s_{2}s_{1}$ & $\times$ & $-$ & $\times$ & $-$ & $-$ & $\times$ & $-$ & $-$ \\
$[4, 3, 2, 1]$ & $s_{1}s_{2}s_{3}s_{1}s_{2}s_{1}$ & $-$ & $-$ & $-$ & $-$ & $-$ & $-$ & $-$ & $-$ \\
\hline
24 & & 11 & 9 & 2 & 8 & 4 & 2 & 9 & 1\\
\end{tabular}
\caption{Applying Theorem~\ref{thm:alllemma} to $S_4$}\label{tab:S4}
\end{table}

\subsection{Pl\"ucker relations not degenerating to monomials}In this section we study some cases in which none of the Pl\"ucker relations produces a monomial in the defining ideal $\init_{\bf w}(\mathcal{I}_w)$. Clearly, this  does not have to be equivalent to the irreducibility of the degeneration, but it turns out to be the case for $n=3$ (by Remark \ref{rem:sl3_case}) and $n\in\{4,5\}$ (by Macaulay2 computations). We do not know whether such an equivalence holds in general.

We have seen in \S\ref{sec:deg-vs-orig}, that if $v\leq c=s_{n-1}s_{n-2}\cdots s_2s_1$, then the initial ideal $\init_{\bf w}(\mathcal{I}_w)$  coincides with $\mathcal{I}_w$.  In the following proposition we will show  that if we multiply $c$ on the right by simple reflections $s_{k_1}, \ldots, s_{k_r}$ which commute pairwise and each appear at most once, then none of the Pl\"ucker relations  degenerates to a monomial in $\init_{\bf w}(\mathcal{I}_{cs_{k_1}\cdots s_{k_r}})$.
\smallskip

Table~\ref{tab:S4} (resp. Table~\ref{tab:S5} in the appendix) show which statements apply to which elements of $ S_4$ (resp. $S_5$). 

\begin{proposition}\label{prop:cox+refl1}
For any $h\in[n-1]$, none of the Pl\"ucker relations  degenerates to a monomial in $\init_{\bf w}(\mathcal{I}_{cs_{h}})$.
\end{proposition}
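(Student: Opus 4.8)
The plan is to pin down the combinatorics of $v:=cs_h$, identify exactly which Pl\"ucker coordinates survive on $X_v$, and then compare $X_v$ with $X_c$, for which Proposition~\ref{prop:isomschubert} already applies. Writing $c=[n,1,2,\dots,n-1]$ in one-line notation, right multiplication by $s_h$ swaps the entries in positions $h$ and $h+1$. For $h=1$ this gives $cs_1=[1,n,2,\dots,n-1]=s_{n-1}\cdots s_2<c$, so $cs_1\le c$; Proposition~\ref{prop:isomschubert} yields $\mathcal{I}_{cs_1}=\init_{\bf w}(\mathcal{I}_{cs_1})$, and since this is the (prime) ideal of the irreducible variety $X_{cs_1}$, it contains no degree-two monomial with both factors non-vanishing, so no relation degenerates to a monomial. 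Hence I may assume $h\ge 2$, where $c(h)=h-1<h=c(h+1)$ gives $cs_h>c$, i.e. $c\le cs_h$. A direct computation of $v([d])$ then shows
\[
v([d])=[d-1]\cup\{n\}=c([d])\quad(d\ne h),\qquad v([h])=[h-2]\cup\{h,n\}.
\]
Using the description \eqref{eqn:BruhatOrderOnGr} of Bruhat order on $\binom{[n]}{d}$, the coordinates non-vanishing on $X_v$ coincide with those on $X_c$ in every degree $d\ne h$, while in degree $h$ one gains \emph{exactly} the coordinates $p_{[h-2]\cup\{h,b\}}$ with $h<b\le n$ (these vanish on $X_c$ but not on $X_v$).

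Next I would dispose of all relations not seeing degree $h$. If $\#J=\#L$ then the support condition defining $\init_{\bf w}(R^k_{J,L})$ is vacuous, so $\init_{\bf w}(R^k_{J,L})=R^k_{J,L}$; and if $\#J,\#L\ne h$, the terms dropped in passing to the initial form vanish on $X_c$ by the proof of Proposition~\ref{prop:isomschubert}, hence on $X_v$ since the two vanishing patterns agree in these degrees. In both situations $\init_{\bf w}(R^k_{J,L})\in\mathcal{I}_v$, so its reduction modulo the vanishing coordinates cannot be a nonzero monomial (a product of two functions non-vanishing on the irreducible $X_v$ is non-vanishing). This leaves the relations with $\#J\ne\#L$ and exactly one of $\#J,\#L$ equal to $h$. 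Suppose toward a contradiction that such an $\init_{\bf w}(R^k_{J,L})$ reduced on $X_v$ to a single nonzero monomial $p_Jp_L$. The reduction of $\init_{\bf w}(R^k_{J,L})$ on $X_c$ is never a nonzero monomial (Proposition~\ref{prop:isomschubert} for $c$, plus irreducibility of $X_c$); if two of its terms survived on $X_c$ they would also survive on $X_v$ because $c\le cs_h$, contradicting that only $p_Jp_L$ survives on $X_v$. Hence $p_Jp_L$ must vanish on $X_c$, which forces its size-$h$ factor to be one of the newly-gained coordinates $p_{[h-2]\cup\{h,b\}}$.

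It remains to produce a second surviving term, and this is the crux. I would exploit that the special index set $[h-2]\cup\{h,b\}$ contains $h$ but omits $h-1$: any size-$h$ index set of the form $[h-2]\cup\{h,b'\}$ with $b'\ge h-1$ is non-vanishing on $X_v$ (it is either special, or equals $[h]$ when $b'=h-1$). So among the monomials of $R^k_{J,L}$ obtained by exchanging elements of $J$ and $L$, I would isolate an exchange that \emph{retains} $h$ and the initial segment $[h-2]$ in the size-$h$ factor while replacing the entry $b$; such an exchange again lands the size-$h$ factor in the non-vanishing family and keeps the complementary factor non-vanishing, giving a second surviving monomial on $X_v$. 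In the representative case $\#L=h$ this is the exchange replacing the entry $b>h$ of $L$, and the case $\#J=h$ is symmetric. The main obstacle is to carry out this last step \emph{uniformly in $k$ and in the shape of $J$}: one must check that a suitable exchange is always available (in particular that the initial-form support condition $\{l_{r_1},\dots,l_{r_k}\}\cap[\#J,\#L-1]=\emptyset$ does not discard it) and that the resulting index sets genuinely satisfy the Bruhat inequalities cutting out $X_v$. Verifying the entries for the relevant $cs_h$ in Tables~\ref{tab:S4} and~\ref{tab:S5} offers a useful consistency check while setting up the general exchange.
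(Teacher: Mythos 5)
Your reduction is sound and follows essentially the same road as the paper: the $h=1$ case via Proposition~\ref{prop:isomschubert}; the computation $v([d])=c([d])$ for $d\neq h$ and $v([h])=[h-2]\cup\{h,n\}$; the identification of the newly surviving coordinates $p_{[h-2]\cup\{h,b\}}$, $b>h$; and the conclusion that any offending relation must have exactly one of its two degrees equal to $h$, with the corresponding factor of the sole surviving monomial of this new type. Even your candidate for the second surviving term --- the exchange that replaces the entry $b$ while retaining $[h-2]\cup\{h\}$ --- is exactly the exchange the paper uses. But the argument stops precisely where the paper's proof starts doing work: you label the verification that this exchange is available, passes the initial-form support condition, and has a non-vanishing complementary factor as ``the main obstacle'' and do not carry it out. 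As written, this is a correct plan rather than a proof. (A smaller point: you assume the sole survivor is the leading term $p_Jp_L$; if instead it is some $p_{J'}p_{L'}$, the same comparison with $X_c$ applies, but this case should be acknowledged.)

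The missing observation that collapses the ``uniformity in $k$'' worry is a containment of index sets: once one factor is forced to have shape $[h-2]\cup\{h,b\}$ and the other $[q-1]\cup\{j\}$ with $q\neq h$, all but one element of the smaller index set already lies in the larger one, so $R^m_{J,L}$ is trivial for $m\ge 2$ and only $R^1_{J,L}$ (swapping that single element) needs to be treated. Its initial form then has exactly three terms, which one can list explicitly; the term exchanging the two ``large'' entries survives the support condition $\{l_{r_1},\dots,l_{r_k}\}\cap[\#J,\#L-1]=\emptyset$ because both entries exceed $\max(\#J,\#L)-1$, and both of its factors are visibly $\le cs_h([\,\cdot\,])$ in the relevant degrees. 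Without this (or an equivalent) explicit check in both configurations ($\#L>h$ and $\#L=h$), the proof is incomplete.
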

\begin{proof}
First of all notice that if $h=1$, then $cs_1<c$ and the claim follows from Proposition \ref{prop:isomschubert}, which says that $\init_{\bf w}(\mathcal{I}_c)=\mathcal{I}_c$.

If $h\in[2,n-1]$, then $cs_{h}>c$. In this case, if $J\le c([\#J])$ and $L\le c([\#L])$, then $\init_{\bf w}(R^m_{J,L})$ being a monomial on $X^a_{cs_{h}}$ implies that it is a monomial on $X_{c}^a$ too. But this is not possible, again by Proposition \ref{prop:isomschubert}.  Therefore we can assume that $L\nleq c([\#L])$ or  $J\nleq c([\#J])$.  We set  $k:=h-1\in[n-2]$ for convenience.

 Recall that for any $i\in[k]\cup[k+2,n-1]$
\begin{eqnarray*}
cs_{k+1} /\langle s_1,\dots,s_{i-1},s_{i+1},\dots,s_{n-1}\rangle &=&s_r\cdots s_{i} /\langle s_1,\dots,s_{i-1},s_{i+1},\dots,s_{n-1}\rangle \\
&=&c/\langle s_1,\dots,s_{i-1},s_{i+1},\dots,s_{n-1}\rangle.
\end{eqnarray*} 
In one-line notation $cs_{k+1}=[n,1,\dots,k-1,k+1,k,k+2,\dots,n-1]$. Hence, if $I\le cs_{k+1}([\#I])$, but $I\nleq c([\#I])$, then $\#I=k+1$ and it must hold 
\begin{equation}\label{eq:varnewJ}
F(I)=[k-1]\cup \{k+1,i\} \hbox{ with }i\in[k+2,n]. 
\end{equation}
Therefore a Pl\"ucker $R^m_{J,L}$ can  produce a monomial in $\init_{\bf w}(\mathcal{I}_{cs_h})$ only if $J$ is a sequence such that $F(J)=[k-1]\cup \{k+1,j\}$ with $j_1=j$ or $F(L)=[k-1]\cup \{k+1,l\}$ for $j,l\in[k+2,n]$. 
If $\#J=\#L$, then $\init_{\bf w}(R^m_{J,L})=R^m_{J,L}$, hence we only have to consider the case $\#J<\#L$.

Let $\#L=p>k+1$, then by \eqref{eq:varnewJ} we have
$F(J)=[k-1]\cup\{k+1,j\}$ and $F(L)=[p-1]\cup\{l\}$
for $j_1=j\in[k+2,n]$ and $l\in[p,n]$. Note that $j\in J$ is the only possible element to swap for elements in $L$ non-trivially, so that we impose $j\not \in L$ (otherwise $R^m_{J,L}=0$ for any $m$). Remember that we may assume $j\in[p,n]$. Then
\begin{eqnarray}\label{eq:newJ}
\init_{\bf w}(R^1_{J,L})=p_Jp_L-p_{(J\setminus(j))\cup(l)}p_{(L\setminus(l))\cup(j)}-p_{(J\setminus(j))\cup(k)}p_{(L\setminus(k))\cup(j)}.
\end{eqnarray}
As $[k-1]\cup\{k+1,l\}\le cs_{k+1}([k+1])$ and $[k-1,p-1]\cup\{j\}\le cs_{k+1}([p])$ at least two terms are non-zero on $X_{cs_{k+1}}$. 

Now, assume $\#L=k+1$ and $\#J=q<k+1$. Then we have 
\[
F(L)=[k-1]\cup\{k+1,l\} \text{ and } F(J)=[q-1]\cup\{j\},
\]
for $j=j_1,l\in[k+2,n]$ and $j\not\in L$ in order for the relation to be non-trivial. We obtain
\begin{eqnarray}\label{eq:newL}
\init_{\bf w}(R^1_{J,L})=p_Jp_L-p_{(J\setminus(j))\cup(k+1)}p_{(L\setminus(k+1))\cup(j)}-
p_{(J\setminus(j))\cup(l)}p_{(L\setminus(l))\cup(j)}.
\end{eqnarray}
As $[q-1]\cup \{l\}\le cs_{k+1}([q])$ and $[k-1]\cup\{k+1,j\}\le cs_{k+1}([k+1])$, the relation $R^1_{J,L}$ does not degenerate to a monomial.
\end{proof}

\begin{corollary}\label{cor:purediff}
Let $h\in[n-1]$. Then $\init_{\bf w}(\mathcal{I}_{cs_{h}})$ is a pure difference ideal in the quotient $\mathbb{C}[p_{I}]/(p_I\mid I\nleq cs_{h}([\#I]))$.
\end{corollary}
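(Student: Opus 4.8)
The plan is to deduce Corollary~\ref{cor:purediff} almost immediately from Proposition~\ref{prop:cox+refl1} together with the explicit description of the initial forms of the Pl\"ucker relations recorded in Section~\ref{sec:pl�cker}. Recall that $\init_{\bf w}(\mathcal{I}_{cs_h})$ is generated, by \eqref{eq:degSchubertideal} and Lemma~\ref{lem:init}, by the Pl\"ucker coordinates $p_I$ with $I\nleq cs_h([\#I])$ together with the initial forms $\init_{\bf w}(R^k_{J,L})$. In the quotient ring $\mathbb{C}[p_I]/(p_I\mid I\nleq cs_h([\#I]))$ the first family of generators becomes zero, so the image of $\init_{\bf w}(\mathcal{I}_{cs_h})$ is generated by the images of the $\init_{\bf w}(R^k_{J,L})$. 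Thus it suffices to show that each such image is a pure difference of monomials, i.e. a $\mathbb{C}$-linear combination of degree-two monomials in the $p_I$ with at most two surviving terms whose coefficients are $\pm1$.

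The key step is to combine two facts. First, every $\init_{\bf w}(R^k_{J,L})$ is already a $\pm1$-combination of degree-two monomials $p_{J'}p_{L'}$, by the explicit formula for $\init_{\bf w}(R^k_{J,L})$ in Section~\ref{sec:pl�cker} (after adjusting signs so that all indices are increasing). Second, Proposition~\ref{prop:cox+refl1} asserts that no $\init_{\bf w}(R^k_{J,L})$ degenerates to a \emph{single} monomial modulo the vanishing coordinates; equivalently, after deleting every monomial $p_{J'}p_{L'}$ for which $J'\nleq cs_h([\#J'])$ or $L'\nleq cs_h([\#L'])$, at least two monomials survive. So first I would record that passing to the quotient simply deletes those monomials $p_{J'}p_{L'}$ in $\init_{\bf w}(R^k_{J,L})$ that involve an index set exceeding the Bruhat bound. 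After this deletion the image is a signed sum of degree-two monomials; I would then argue that in fact exactly the leading monomial $p_Jp_L$ and one further monomial survive, so that the image is (up to sign) a genuine \emph{binomial} $p_Jp_L-p_{J'}p_{L'}$, which is a pure difference.

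Concretely, I would go through the two cases isolated in the proof of Proposition~\ref{prop:cox+refl1}. In each case the initial relation \eqref{eq:newJ}, respectively \eqref{eq:newL}, is displayed explicitly as a sum of three monomials, and the proposition identifies precisely two of the three as nonvanishing on $X_{cs_{k+1}}$ while forcing the third to vanish. Hence modulo the vanishing ideal each such relation reduces to a difference of exactly two monomials with coefficients $\pm 1$, which is the defining shape of a pure difference binomial. For the remaining relations (those with $J\le c([\#J])$ and $L\le c([\#L])$, or with $\#J=\#L$), the proof shows $\init_{\bf w}(R^k_{J,L})=R^k_{J,L}$ or that it reduces to the $X_c^a$ situation governed by Proposition~\ref{prop:isomschubert}; in those cases the relation is honest (nondegenerate), and one checks it again contributes a pure difference in the quotient. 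Collecting all generators, every generator of the image of $\init_{\bf w}(\mathcal{I}_{cs_h})$ is a pure difference, so the ideal is a pure difference ideal, as claimed.

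The main obstacle I anticipate is bookkeeping rather than conceptual: one has to make sure that after deleting the vanishing monomials no generator collapses to a single monomial (which Proposition~\ref{prop:cox+refl1} guarantees) and, conversely, that a generator never leaves \emph{three or more} surviving monomials which would fail to be a binomial. The first is exactly the content of Proposition~\ref{prop:cox+refl1}; for the second I would lean on the explicit three-term shape of the relevant $\init_{\bf w}(R^1_{J,L})$ in \eqref{eq:newJ} and \eqref{eq:newL}, noting that precisely one of the three monomials is killed, leaving a binomial. A small amount of care is needed to treat the general $k$ (not just $k=1$) relations and to confirm that a pure difference of \emph{two} monomials is the intended meaning of ``pure difference ideal'' in the quotient; once these points are pinned down, the corollary follows directly.
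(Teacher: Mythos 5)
Your proposal is correct and follows essentially the same route as the paper: reduce to the explicit three-term relations \eqref{eq:newJ} and \eqref{eq:newL} from the proof of Proposition~\ref{prop:cox+refl1}, check directly that exactly one of the three monomials vanishes on $X_{cs_h}$ (the paper verifies $[k-1]\cup[k+1,p-1]\cup\{j,l\}\nleq cs_{k+1}([p])$ and $[k-1]\cup\{j,l\}\nleq cs_{k+1}([k+1])$), and handle $h=1$ together with the relations whose index sets are all below $c$ via Proposition~\ref{prop:isomschubert}. One small caution: Proposition~\ref{prop:cox+refl1} by itself only guarantees that \emph{at least} two terms survive, so the vanishing of the third term is genuinely additional content --- you correctly flag this as the point needing verification, and it is exactly the check the paper's proof supplies.
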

\begin{proof}
First note that if $h=1$, then by Proposition \ref{prop:isomschubert} $\init_{\bf w}(\mathcal{I}_{cs_{1}})=\mathcal{I}_{cs_1}$.
The Pl\"ucker relations involving non-vanishing Pl\"ucker coordinates on $X_{cs_1}$ are for $q<p\le j<l\le n$ the following pure differences
\[
p_{[q-1]\cup\{j\}}p_{[p-1]\cup\{l\}}-p_{[q-1]\cup\{l\}}p_{[p-1]\cup\{j\}}.
\]
Notice that the index sets of the Pl\"ucker coordinates in the above equation (as well as in the rest of this proof) are sets, and hence by convention, as sequences they are arranged in an increasing order, while in the proof of the previous result we always had $j=j_1$. This only affect the relation by a global sign.

If $h\in[2,n-1]$, we can set again $k:=h-1$. In the proof of Proposition~\ref{prop:cox+refl1} we have seen in equations \eqref{eq:newJ} and \eqref{eq:newL} the form of the additional relations for $cs_{k+1}$. Note that in \eqref{eq:newJ} we have $[k-1]\cup[k+1,p-1]\cup\{j,l\}\not \le cs_{k+1}([p])$ and hence, the middle term vanishes on $X_{cs_{k+1}}$. Similarly observe for \eqref{eq:newL} that $[k-1]\cup\{j,l\}\not\le cs_{k+1}([k+1])$ as $j,l\ge k+2$. So all generators of $\init_{\bf w}(\mathcal{I}_{cs_{k+1}})$ are pure differences in $\mathbb{C}[p_{I}\vert %I\in\binom{[n]}{k},k\in[n-1]
]/(p_{I}\mid I\not\leq cs_{k+1}(\#I))$.
\end{proof}

\begin{remark}
Note that while $\init_{\bf w}(\mathcal{I}_w)$ and $\mathcal{I}_w$ have the same generators for $w\le c$, this is not true for $cs_{k+1}$ with $k\ge1$. Here taking the initial ideal with respect to ${\bf w}$ modifies the generators.
\end{remark}

The following proposition generalizes Proposition	 \ref{prop:cox+refl1} to a product of pairwise distinct commuting simple reflections.

\begin{proposition}\label{prop:cox+refl}
Take $k_1,\dots,k_r\in [n-1]$ with $|k_i-k_j|>1$ for all $i\neq j$,  then none of the Pl\"ucker relations  degenerates to a monomial in $\init_{\bf w}(\mathcal{I}_{cs_{k_1}\cdots s_{k_r}})$.
\end{proposition}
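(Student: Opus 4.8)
The strategy is to generalize the proof of Proposition~\ref{prop:cox+refl1} from a single reflection $s_h$ to the product $cs_{k_1}\cdots s_{k_r}$ of pairwise distinct commuting reflections. Write $u:=cs_{k_1}\cdots s_{k_r}$. Since the $s_{k_i}$ commute pairwise and each increases the length (those with $k_i=1$ would decrease it, but as in the single-reflection case such factors are absorbed into $c$ without creating new non-vanishing coordinates, so we may assume each $k_i\ge 2$), we first record the one-line notation of $u$: starting from $c=[n,1,2,\dots,n-1]$, each reflection $s_{k_i}$ with $k_i\ge 2$ swaps the entries in positions $k_i$ and $k_i+1$, and because $|k_i-k_j|>1$ these swaps act on disjoint pairs of adjacent positions. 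Thus $u$ is obtained from $c$ by a collection of non-overlapping adjacent transpositions of its one-line values.

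\textbf{Key steps.} The heart of the argument is the analogue of equation~\eqref{eq:varnewJ}: I would determine exactly which index sets $I$ satisfy $I\le u([\#I])$ but $I\nleq c([\#I])$. Because the swaps act on disjoint position-pairs, the computation localizes: for each $k_i$ (with $k_i+1<n$, i.e.\ not touching the wrapped entry $n$) the new admissible sets of cardinality $k_i+1$ are precisely those of the form $[k_i-1]\cup\{k_i+1,b\}$ with $b\in[k_i+2,n]$, exactly as in Proposition~\ref{prop:cox+refl1}, and for different $i$ these families involve different cardinalities $k_i+1$, so they do not interact. Consequently any Pl\"ucker relation $R^m_{J,L}$ that could degenerate to a monomial in $\init_{\bf w}(\mathcal{I}_u)$ must have $F(J)$ or $F(L)$ of the form $[k_i-1]\cup\{k_i+1,\ast\}$ for a single $i$. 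Having reduced to one distinguished index $k:=k_i-1$, I would then invoke precisely the case analysis of Proposition~\ref{prop:cox+refl1}: split into $\#J<\#L$ and $\#L=k_i$ with $\#J<k_i$, write down the initial forms as in \eqref{eq:newJ} and \eqref{eq:newL}, and verify that at least two terms survive on $X_u$ because the required inequalities $[k-1]\cup\{k+1,l\}\le u([k+1])$ and $[k-1,p-1]\cup\{j\}\le u([p])$ (resp.\ their \eqref{eq:newL}-analogues) continue to hold for $u$ exactly as they did for $cs_{k+1}$ --- the extra reflections $s_{k_j}$ ($j\ne i$) only enlarge the Schubert cell and hence preserve these order relations.

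\textbf{The main obstacle.} The delicate point is justifying that the admissible index sets genuinely localize, i.e.\ that no ``mixed'' set simultaneously exploiting two different swaps $s_{k_i},s_{k_j}$ becomes newly admissible in a way that feeds a monomial not covered by the single-reflection analysis. This is where the hypothesis $|k_i-k_j|>1$ is essential: it guarantees the transposed position-pairs are disjoint and non-adjacent, so that for a fixed cardinality $d$ the criterion $I\le u([d])$ decouples into independent constraints, and a set can be $\le u([d])$ but $\nleq c([d])$ only by engaging one swap at a time. I would make this precise using the characterization \eqref{eqn:BruhatOrderOnGr} together with the explicit one-line form of $u$, comparing $u(i)$ to $c(i)$ coordinatewise; since they differ only on the disjoint pairs $\{k_i,k_i+1\}$, the comparison reduces to the analysis already carried out. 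Once this localization is established, the remainder is a direct citation of the computations in Proposition~\ref{prop:cox+refl1} applied to each $k_i$ separately, and the proof concludes that no $R^m_{J,L}$ degenerates to a monomial.
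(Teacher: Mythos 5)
Your localization of the newly admissible index sets is correct as far as individual sets go (modulo an off-by-one: the reflection $s_{k_i}$ creates new admissible sets at cardinality $k_i$, of the form $[k_i-2]\cup\{k_i,b\}$, not at cardinality $k_i+1$), but the argument has a genuine gap at the point where you conclude that every relevant Pl\"ucker relation engages ``one distinguished index'' and can therefore be handled by citing Proposition~\ref{prop:cox+refl1}. A relation $R^m_{J,L}$ has two index sets of two \emph{different} cardinalities, so $F(J)$ can be a newly admissible set for $s_{k_i}$ while $F(L)$ is simultaneously a newly admissible set for $s_{k_j}$ with $j\neq i$. Such a relation is invisible to the single-reflection analysis: on $X_{cs_{k_i}}$ the coordinate $p_L$ vanishes and on $X_{cs_{k_j}}$ the coordinate $p_J$ vanishes, so neither instance of Proposition~\ref{prop:cox+refl1} says anything about it, and the fact that the other reflections ``only enlarge the Schubert variety'' does not help. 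This mixed case is precisely where the paper does new work: after reducing to $r\le 2$ (because a Pl\"ucker relation only involves a pair of Grassmannians), it writes out $R^1_{J,L}$ for $F(J)=[k-1]\cup\{k+1,j\}$, $F(L)=[g-1]\cup\{g+1,l\}$, observes that the initial form has three terms of which one genuinely \emph{does} vanish on $X_{cs_{k+1}s_{g+1}}$, and checks by hand that the term $p_{(J\setminus(j))\cup(l)}p_{(L\setminus(l))\cup(j)}$ survives. That computation cannot be skipped.

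A second, smaller issue is your treatment of $k_i=1$. You dismiss it on the grounds that $s_1$ is ``absorbed into $c$ without creating new non-vanishing coordinates,'' but the concern runs the other way: since $cs_1<c$, multiplying by $s_1$ makes \emph{more} Pl\"ucker coordinates vanish, and vanishing coordinates are exactly what turns a degenerate Pl\"ucker relation into a monomial. The paper handles this by noting that $cs_1s_{k_2}$ lies in the parabolic $\langle s_2,\dots,s_{n-1}\rangle$ and invoking Corollary~\ref{cor:smallflag} to pass to $\Flag_{n-1}^a$, where Proposition~\ref{prop:cox+refl1} applies to $\tilde c\tilde s_{k_2}$; some such reduction is needed rather than an outright dismissal.
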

\begin{proof}
We may assume $k_1<k_2<\ldots<k_r$ without loss of generality.
Moreover, since we are multiplying by pairwise distinct commuting reflections, and as Pl\"ucker relations only involve pairs of Grassmannians, it is enough to consider the cases $r=1,\ 2$. The case $r=1$ was dealt with in Proposition~\ref{prop:cox+refl1}, so we are left with $r=2$.

We consider two cases:  firstly, we deal with the case $k_1=1$, and then we suppose $k_1\neq 1$.

If $k_1=1$, $cs_1<c$ can be identified with the  Coxeter element $\tilde{c}=\tilde{s}_{n-2}\ldots \tilde{s}_{1}$ in $S_{n-1}$ (via $s_i\mapsto \tilde{s}_{i-1}$ for $i\in[2,n-1]$). In this case, $cs_1s_{k_2}\in\langle s_2, \ldots, s_{n-1}\rangle$ and, by Corollary \ref{cor:smallflag}, we have $\init_{\bf w}(\mathcal{I}_{cs_1s_{k_2}})=\init_{\bf w}(\mathcal{I}_{\tilde{c}\tilde{s}_{k_2}})$. We then apply Proposition~\ref{prop:cox+refl1} to obtain the claim. 

Now denote $k_1:=k+1$ and $k_2:=g+1$ and recall, that by assumption $k< g+1$. As in the proof of Proposition \ref{prop:cox+refl1}, we only have to deal with Pl\"ucker relations $R^m_{J,L}$ with $\#J\neq\#L$, where $J\nleq cs_{k+1}s_{g+1}([\#J])$ or $L\nleq cs_{k+1}s_{g+1}([\#L])$. We can further reduce to the case $\#J=k+1$, $j_1=j$, and $\#L=g+1$, otherwise the Pl\"ucker relations are the same as the ones considered in Proposition~\ref{prop:cox+refl1}, and the result has been proven above. 

Consider relations $R^m_{J,L}$ with $\#J=k+1,\#L=g+1$ and $J\le cs_{k+1}s_{g+1}([k+1]),J\not \le c([k+1])$ and $L\le cs_{k+1}s_{g+1}([g+1]),L\not \le c([g+1])$. We have shown in Proposition~\ref{prop:cox+refl1} that in this case it must hold
\[
F(J)=[k-1]\cup\{k+1,j\},\quad F(L)=[g-1]\cup\{g+1,l\}
\]
with $j\in[k+2,n]$ 
 and $l\in[g+2,n]$.  In order for the relation to be non-trivial we may assume $j\not\in L$. Since $k+1\in[g-1]$, the only relation to be considered is
\begin{eqnarray*}
R^1_{J,L}&=&p_Jp_L-p_{(J\setminus(j))\cup(l)}p_{(L\setminus(l))\cup(j)}-p_{(J\setminus(j))\cup(g+1)}p_{(L\setminus(g+1))\cup(j)}\\
&\quad&-\sum_{r\in[k+1,g-1]}p_{(J\setminus(j))\cup(r)}p_{(L\setminus(r))\cup(j)}.
\end{eqnarray*}
It degenerates to
\[\init_{\bf w}(R^1_{J,L})=p_Jp_L-p_{(J\setminus(j))\cup(l)}p_{(L\setminus(l))\cup(j)}-p_{(J\setminus(j))\cup(g+1)}p_{(L\setminus(g+1))\cup(j)}.\]
The monomial $p_{(J\setminus(j))\cup(l)}p_{(L\setminus(l))\cup(j)}$ does not vanish on the coordinate ring of $X_{cs_{k+1}s_{l+1}}$ (and thus of $X_{cs_{k_1}\ldots s_{k_r}}$). 
Hence, $\init_{\bf w}(R^1_{J,L})$  is not monomial and this finishes the proof.
\end{proof}

Lemma~\ref{lem:monofreecoxeter} below shows that the Coxeter word $c=s_{n-1}\cdots s_2s_1$ is in fact special among all Coxeter words regarding the degeneration.

\begin{lemma}\label{lem:monofreecoxeter}
Let $w\in S_n$ have a reduced expresion $\underline{w}=s_{i_r}\cdots s_{i_1}$ with $i_k\not =i_l$ for all $k\not =l$. Then none of the Pl\"ucker relations degenerates to a monomial in $\init_{\bf w}(\mathcal{I}_w)$ if and only if $w\le c$.
\end{lemma}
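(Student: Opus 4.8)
The plan is to prove the equivalence in Lemma~\ref{lem:monofreecoxeter} by establishing both directions separately, using the characterization of reduced expressions with distinct simple reflections. First I would clarify what such permutations look like: if $\underline{w}=s_{i_r}\cdots s_{i_1}$ has all $i_k$ pairwise distinct, then the support $\{i_1,\dots,i_r\}$ is a subset of $[n-1]$ and $w$ is an involution-free product avoiding repetitions. The key structural fact I would try to exploit is that the set $\{i_1,\dots,i_r\}$, being a subset of $[n-1]$ with each index used once, decomposes into maximal intervals of consecutive integers, and commuting reflections across gaps let us treat each interval independently (as in Corollary~\ref{cor:smallflag}, which lets us reduce to smaller flag varieties on each block of consecutive simple reflections).

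For the backward direction ($w\le c \Rightarrow$ no monomial), I can invoke Proposition~\ref{prop:isomschubert} directly: if $w\le c$ then $\mathcal{I}_w=\init_{\bf w}(\mathcal{I}_w)$, and since $\mathcal{I}_w$ is generated by honest Pl\"ucker relations together with vanishing coordinates, no generator can degenerate to a monomial (the Pl\"ucker relations in $\mathcal{I}_{\Flag_n}$ contain no monomials, and intersecting with the linear ideal $(p_I)_{I\nleq w([\#I])}$ preserves this). So the content is entirely in the forward direction.

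For the forward direction, I would argue the contrapositive: suppose $w$ has a reduced expression with distinct letters but $w\nleq c$, and show that some Pl\"ucker relation degenerates to a monomial. By \eqref{eq:coxeter}, $w\nleq c$ means there is some $d$ with $w([d])\neq [d-1]\cup\{b\}$ for any $b\ge d$, i.e. $w([d])$ misses two or more elements of $[d-1]$. The natural strategy is to extract from this failure a combinatorial configuration that matches the hypotheses of one of the criteria in Theorem~\ref{thm:alllemma}. Given the distinct-letter assumption, I expect $w$ to have a controlled one-line notation: the support structure forces $w$ to move each interval of indices by a single ``shift'' pattern, so that $w([d])\nleq c([d])$ pinpoints a specific $i$ where two values below a threshold are both absent from $w([i])$. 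I would then verify that this produces exactly the combinatorial data (an ascent $ws_i>w$ or $s_jw>w$ together with the membership/size conditions) required by Theorem~\ref{thm:alllemma}~(1) or~(6), thereby exhibiting a degenerate monomial.

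The hard part will be the forward direction's combinatorial bookkeeping: translating the abstract Bruhat condition $w\nleq c$ into the precise hypotheses of one of the seven criteria, and checking that the distinct-letter assumption genuinely guarantees the existence of the needed indices $i,j$ (rather than merely making it plausible). I anticipate needing a careful case analysis on the shape of the maximal consecutive blocks in the support, possibly reducing via Corollary~\ref{cor:smallflag} to a single block where $w$ is a cycle-like element, and then pinning down the minimal $d$ witnessing $w\nleq c$ to feed Theorem~\ref{thm:alllemma}~(6). Verifying that no boundary index pathologies (such as $j$ failing to exist, per the remark following Theorem~\ref{thm:alllemma}) obstruct the argument will require attention, but I expect the distinct-letter hypothesis to rule these out cleanly.
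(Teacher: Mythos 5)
Your backward direction is fine and matches the paper: Proposition~\ref{prop:isomschubert} gives $\mathcal{I}_w=\init_{\bf w}(\mathcal{I}_w)$ for $w\le c$, and a prime Schubert ideal cannot contain a monomial $p_Jp_L$ with both $J,L\le w([\cdot])$. The problem is the forward direction, which is where all the content lies and which you leave as a plan rather than a proof. You propose to translate $w\nleq c$ via \eqref{eq:coxeter} into the hypotheses of Theorem~\ref{thm:alllemma}~(1) or~(6), but you never exhibit the indices those criteria require, and you explicitly defer the ``combinatorial bookkeeping'' that would do so. That bookkeeping is not routine: criterion (1) needs an ascent $ws_i>w$ together with two elements $i,j\le w(i)$ avoiding $w([i-1])\cup\{w(i+1)\}$, and criterion (6) needs control of the minimal non-fixed point and a further minimal $j$; none of this follows directly from ``$w([d])$ misses two elements of $[d-1]$''. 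Moreover, your structural picture of distinct-letter words is off: the support alone does not determine $w$ (e.g.\ $s_1s_2$ versus $s_2s_1$ have the same support and connected block, yet one is $\le c$ and the other is not), so there is no ``single shift pattern'' per block, and the reduction to a cycle-like element on one block is not justified.

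The idea you are missing is the one the paper's proof turns on: for a reduced word $s_{i_r}\cdots s_{i_1}$ with pairwise distinct letters, $w\nleq c$ is equivalent to the existence of an index $i=i_k$ such that $s_{i+1}$ occurs to the right of $s_i$ (i.e.\ $i_l=i_k+1$ with $l<k$). Taking $k$ minimal with this property, all letters to the left of $s_i$ commute with $s_i$, so $w\in s_i\langle s_m : m\neq i\rangle$; this immediately gives $w([i])=[i-1]\cup\{i+1\}$ and, since $s_{i+1}$ is applied before $s_i$ and occurs only once, $w(i+1)\ge i+2$. With these two facts one writes down a single explicit relation $R^1_{J,L}$ with $F(J)=w([i])$, $j_1=i+1$, and $F(L)=[i]\cup\{i+2\}$, whose initial form is $p_Jp_L-p_{(J\setminus(i+1))\cup(i+2)}p_{(L\setminus(i+2))\cup(i+1)}$; the second term vanishes on $X_w$ because $[i-1]\cup\{i+2\}\nleq w([i])$. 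Without this (or an equivalently concrete) construction, your argument does not close, so as it stands the proposal has a genuine gap in the only nontrivial direction.
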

\begin{proof}
"$\Leftarrow$" by Proposition~\ref{prop:isomschubert}.

\noindent
"$\Rightarrow$"
Assume $w=s_{i_r}\ldots s_{i_1}$ is a product of pairwise distinct simple reflections. First note that $w\not \le c$ implies there exists an $i_k\in\{i_1,\dots,i_r\}$ such that $i_k+1=i_{l}$ for $l<k$. We choose $i=i_k$, such that $k$ is minimal with this property. In particular, if there exists $t$ with $i_t+1=i$ then $t<k$. Since $s_i$ commutes with all reflections $s_{i_m}$ with $m>k$, as in this case $i_m\neq i\pm 1$ by minimality of $k$, we observe
\[
w=s_is_{i_r}\dots s_{i_{k+1}}s_{i_{k-1}}\dots s_{i_1}\in s_i\langle s_1,s_2 \ldots,s_{i-1},s_{i+1}, \ldots s_{n-1} \rangle.
\]
 We deduce that $w([i])=[i-1]\cup\{i+1\}$. Moreover, 
notice $w(i+1)\geq i+2$, since $i+1$ is moved only by $s_i$ and $s_{i+1}$, but we apply $s_{i+1}$ first and by hypothesis there are no other occurrences of $s_{i+1}$. We can now produce the degree two monomial in $\init_{\bf w}(\mathcal{I}_w)$ by choosing as $J$ any sequence such that $F(J)=w([i])$ and $j_1=i+1$,  and as $L$ any sequence with $F(L)=[i]\cup\{i+2\}$, so that
\begin{eqnarray*}
R^1_{J,L}&=&p_Jp_L- p_{(J\setminus(i+1))\cup(i+2)}p_{(L\setminus(i+2))\cup(i+1)} -p_{(J\setminus(i+1))\cup(i)}p_{(L\setminus(i))\cup(i+1)},\end{eqnarray*}
\begin{eqnarray*}
\init_{\bf w}(R^1_{J,L})&=&p_Jp_L-p_{(J\setminus(i+1))\cup(i+2)}p_{(L\setminus(i+2))\cup(i+1)}.
\end{eqnarray*}
As $[i-1]\cup\{i+2\}\not\le w([i])$ the second term vanishes on $X_w$.
\end{proof}

\subsection{More and more monomials}
If we can write a permutation $u\in S_n$ as a product of two  permutations $v$, $w$ belonging to two distinct parabolic subgroups which centralize each other, then we can check how a Pl\"ucker relation degenerates on $\mathcal{I}_u$ by looking at the ideals $\mathcal{I}_v$ and $\mathcal{I}_w$. Lemma~\ref{lem:comm.Schubert} concerns defining ideals for Schubert varieties and allows us to deduce Corollary~\ref{cor:commuting}, which suggests an inductive procedure on $n$ to find Schubert varieties that become reducible under Feigin's degeneration.

\begin{lemma}\label{lem:comm.Schubert}
Let $v,w\in S_n$ assume there exist  two  sets of simple reflections $\mathcal{S}_v=\{s_{i_1}, \ldots, s_{i_r}\}$ and $\mathcal{S}_w=\{s_{j_1}, \ldots, s_{j_s}\}$ such that $|i_h-j_l|>1$ for all $h\in[r]$,  $l\in[s]$ with $v\in\langle \mathcal{S}_v\rangle$ and $w\in\langle \mathcal{S}_w\rangle$. Then for all  sequences $J,L$ with $k\le \#J$ we have
\[
R^k_{J,L}\vert_{X_{vw}}=R^k_{J,L}\vert_{X_{v}} \text{ or } R^k_{J,L}\vert_{X_{vw}}=R^k_{J,L}\vert_{X_{w}}.
\]
\end{lemma}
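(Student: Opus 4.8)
The plan is to compare, monomial by monomial, the three restrictions of a fixed Pl\"ucker relation $R^k_{J,L}$, and to show that the set of monomials surviving on $X_{vw}$ coincides with the set surviving on $X_v$ or with the set surviving on $X_w$. Put $e=\#J$ and $d=\#L$ (so $e\le d$ by definition of $R^k_{J,L}$). Every coordinate appearing in $R^k_{J,L}$ is indexed by a set of size $e$ (namely $J$ and the $J'$) or of size $d$ (namely $L$ and the $L'$), so the restriction to $X_u$ depends only on the two sets $u([e])$ and $u([d])$: a monomial $p_{J'}p_{L'}$ is deleted exactly when $J'\not\le u([e])$ or $L'\not\le u([d])$. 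Two facts will be used throughout. First, we may assume $\{j_1,\dots,j_k\}\cap F(L)=\emptyset$, since otherwise the swap terms inserting an element already in $L$ vanish and $R^k_{J,L}$ degenerates, so there is nothing to prove; this gives $\lvert F(J)\cap F(L)\rvert\le e-k$. Second, every monomial satisfies the content identity $F(J')\uplus F(L')=F(J)\uplus F(L)$, so the number of indices occurring twice is the same for every term, equal to $\lvert F(J)\cap F(L)\rvert$.

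Next I would record the block picture coming from $|i_h-j_l|>1$: the elements permuted by $v$ (resp.\ $w$) form a union of intervals $A$ (resp.\ $C$), and $A,C$ are disjoint and pairwise non-adjacent. Hence $(vw)(a)=v(a)$ on $A$, $(vw)(a)=w(a)$ on $C$, and $(vw)(a)=a$ elsewhere, so for every size $s$
\[
(vw)([s])=\big([s]\setminus(A\cup C)\big)\sqcup v([s]\cap A)\sqcup w([s]\cap C),
\]
and likewise for $v([s])$ and $w([s])$. Consequently $v([s]),w([s]),(vw)([s])$ differ from $[s]$ only through a block that \emph{straddles} $s$ (contains both $s$ and $s+1$); since $A\cap C=\emptyset$, no two blocks straddle the same $s$, so at each size one has either $(vw)([s])=v([s])$ (no $C$-block straddles $s$) or $(vw)([s])=w([s])$ (no $A$-block straddles $s$).

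Evaluating this dichotomy at the two sizes $e$ and $d$ yields the cases. In the \emph{aligned} case, where no $C$-block straddles either of $e,d$ (resp.\ no $A$-block does), one has $(vw)([e])=v([e])$ and $(vw)([d])=v([d])$ (resp.\ the same with $w$); the deletion rule is then literally identical for $X_{vw}$ and $X_v$ (resp.\ $X_w$), whence $R^k_{J,L}\vert_{X_{vw}}=R^k_{J,L}\vert_{X_v}$ (resp.\ $=R^k_{J,L}\vert_{X_w}$). The \emph{mixed} case is the crux: here the block straddling $e$ and the block straddling $d$ are of different types. Since $e\le d$ and these two blocks are distinct, necessarily $e<d$ and the block straddling $e$ lies entirely below the one straddling $d$; writing the lower block $[p,q]$ and the upper $[p',q']$, non-adjacency forces $q\le p'-1$. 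Assume $[p,q]$ is an $A$-block and $[p',q']$ a $C$-block (the types-exchanged case is symmetric under $v\leftrightarrow w$). I claim no monomial of $R^k_{J,L}$ survives on $X_{vw}$, so all three restrictions equal $0$. Indeed a surviving monomial has $J'\le(vw)([e])=v([e])\subseteq[q]$, hence $F(J')\subseteq[q]$, and $L'\le(vw)([d])=w([d])$; since $[p'-1]$ is the bottom segment of $w([d])$, \eqref{eqn:BruhatOrderOnGr} forces $[p'-1]\subseteq F(L')$. As $[q]\subseteq[p'-1]$ this gives $F(J')\subseteq F(L')$, so $e=\lvert F(J')\cap F(L')\rvert$ indices occur twice in the content; but this number equals $\lvert F(J)\cap F(L)\rvert\le e-k<e$, a contradiction.

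The main obstacle is exactly this mixed case. The aligned case is immediate once the size-wise description of $(vw)([s])$ is available, whereas the mixed case genuinely uses the interplay between the Bruhat inequalities at the two different Grassmannians—forcing $J'$ to sit inside $[q]$ while $L'$ must contain all of $[p'-1]$—together with the conservation of content of the relation. The delicate point to isolate at the outset is the reduction $\{j_1,\dots,j_k\}\cap F(L)=\emptyset$ and the resulting bound $\lvert F(J)\cap F(L)\rvert\le e-k$, since it is precisely this bound that makes the index count contradictory and thereby closes the mixed case.
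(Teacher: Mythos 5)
The paper states this lemma without any proof (it is followed immediately by Corollary~\ref{cor:commuting}), so there is no argument of the authors to compare against; I am assessing your proof on its own. Your overall strategy is sound and essentially complete. The block description of $vw$, the fact that at each size $s$ one has $(vw)([s])=v([s])$ or $(vw)([s])=w([s])$ because at most one block straddles $s$, the resulting aligned/mixed dichotomy at the two sizes $e=\#J$ and $d=\#L$, and the index-counting contradiction in the mixed case (forcing $F(J')\subseteq[q]\subseteq[p'-1]\subseteq F(L')$) all check out. Two harmless slips: the $A$- and $C$-blocks need not be \emph{non-adjacent} (take $i_h=3$, $j_l=5$, giving blocks $\{3,4\}$ and $\{5,6\}$), but you only use disjointness, which already yields $q\le p'-1$; and the assertion that all three restrictions vanish in the mixed case deserves a sentence, though it follows at once from $X_v,X_w\subseteq X_{vw}$ (as $\ell(vw)=\ell(v)+\ell(w)$ gives $v,w\le vw$), or by rerunning your argument with $v([d])=[d]$ and $w([e])=[e]$.

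The one genuinely weak step is the opening reduction to $\{j_1,\dots,j_k\}\cap F(L)=\emptyset$. It is not true that otherwise "$R^k_{J,L}$ degenerates" and "there is nothing to prove": for example $R^2_{(1,3),(1,4,5)}=p_{13}p_{145}-p_{14}p_{135}+p_{15}p_{134}$ is a nonzero relation with $j_1=1\in F(L)$, and the lemma's claim for it is not vacuous. Fortunately your mixed-case contradiction does not need the full strength of $\lvert F(J)\cap F(L)\rvert\le e-k$; it only needs $\lvert F(J)\cap F(L)\rvert<e$, i.e.\ $F(J)\not\subseteq F(L)$. The correct repair is therefore: if $F(J)\subseteq F(L)$, then in any term $p_{J'}p_{L'}$ with $L'$ repetition-free every $j_a$ must lie in $\{l_{r_1},\dots,l_{r_k}\}$, so $\{l_{r_1},\dots,l_{r_k}\}=\{j_1,\dots,j_k\}$; there is exactly one such choice of $r_1<\dots<r_k$, it contributes $+p_Jp_L$, and hence $R^k_{J,L}\equiv 0$, making the statement vacuous. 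If instead $F(J)\not\subseteq F(L)$, then $\lvert F(J)\cap F(L)\rvert\le e-1<e$ and your counting argument closes the mixed case exactly as written. With this substitution the proof is correct.
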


\begin{corollary}\label{cor:commuting}
Let $v,w\in S_n$ assume there exist  two  sets of simple reflections $\mathcal{S}_v=\{s_{i_1}, \ldots, s_{i_r}\}$ and $\mathcal{S}_w=\{s_{j_1}, \ldots, s_{j_s}\}$ such that $|i_h-j_l|>1$ for all $h\in[r]$,  $l\in[s]$ with $v\in\langle \mathcal{S}_v\rangle$ and $w\in\langle \mathcal{S}_w\rangle$. Then
\begin{enumerate}
    \item None of the $R^k_{J,L}$ degenerates to a monomial nor in $\init_{\bf w}(\mathcal I_w)$ neither in $\init_{\bf w}(\mathcal I_v)$, if and only if none of the $R^k_{J,L}$ degenerates to a monomial in $\init_{\bf w}(\mathcal I_{vw})$.
    \item If $\init_{\bf w}(\mathcal{I}_w)$ or $\init_{\bf w}(\mathcal{I}_v)$ contains a monomial degenerate Pl\"ucker relation, then so does $\init_{\bf w}(\mathcal{I}_{vw})$.
\end{enumerate}
\end{corollary}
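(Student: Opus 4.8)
The plan is to deduce everything from the dichotomy in Lemma~\ref{lem:comm.Schubert} after transporting it from the Pl\"ucker relations $R^k_{J,L}$ to their initial forms. Recall that a relation \emph{degenerates to a monomial in} $\init_{\bf w}(\mathcal{I}_u)$ means precisely that $\init_{\bf w}(R^k_{J,L})$, read modulo the vanishing coordinates $(p_I)_{I\not\le u([\#I])}$ --- equivalently its restriction $\init_{\bf w}(R^k_{J,L})\vert_{X_u}$ --- is a single nonzero monomial. Since $\init_{\bf w}(R^k_{J,L})$ is by construction the sub-sum of those terms of $R^k_{J,L}$ of maximal ${\bf w}$-weight, and since restricting to a Schubert variety only deletes the terms whose index set is not $\le u$, the equalities of Lemma~\ref{lem:comm.Schubert} pass verbatim to initial forms: for all $k,J,L$ one has $\init_{\bf w}(R^k_{J,L})\vert_{X_{vw}}=\init_{\bf w}(R^k_{J,L})\vert_{X_v}$ or $\init_{\bf w}(R^k_{J,L})\vert_{X_{vw}}=\init_{\bf w}(R^k_{J,L})\vert_{X_w}$. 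This is the one observation I would isolate first.

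Granting this, the forward implication of (1) is immediate: if no relation degenerates to a monomial on $X_v$ nor on $X_w$, then for each $k,J,L$ the restriction $\init_{\bf w}(R^k_{J,L})\vert_{X_{vw}}$ coincides with one of the two non-monomial restrictions, so it is not a monomial either. The backward implication of (1) is the contrapositive of (2), so the whole statement reduces to proving (2).

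For (2), assume without loss of generality that $\init_{\bf w}(\mathcal{I}_v)$ contains a monomial, coming from some $R^k_{J,L}$ with $\init_{\bf w}(R^k_{J,L})\vert_{X_v}$ a monomial. By the dichotomy above, either $\init_{\bf w}(R^k_{J,L})\vert_{X_{vw}}$ equals $\init_{\bf w}(R^k_{J,L})\vert_{X_v}$, in which case the same monomial survives and we are done, or it equals $\init_{\bf w}(R^k_{J,L})\vert_{X_w}$, which a priori could acquire extra terms surviving through the block on which $w$ acts. To rule this out I would \emph{localise} the monomial. Write $B_v$ (resp.\ $B_w$) for the set of positions moved by the reflections in $\mathcal{S}_v$ (resp.\ $\mathcal{S}_w$); the hypothesis $|i_h-j_l|>1$ says exactly that $B_v$ and $B_w$ are disjoint and mutually non-adjacent, and one checks that $vw$ agrees with $v$ on $[n]\setminus B_w$ and with $w$ on $B_w$. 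Using Corollary~\ref{cor:smallflag} to reduce $X_v^a$ to the flag variety of the block hosting $\mathcal{S}_v$ (and, if $\mathcal{S}_v$ is disconnected, to the single connected component producing the monomial), one may take the monomial-producing relation $R^k_{J,L}$ to have all of its Pl\"ucker coordinates indexed by sets $I$ with $I\cap B_w=\emptyset$. For such $I$ the componentwise description of the Bruhat order on $\binom{[n]}{k}$ in \eqref{eqn:BruhatOrderOnGr} gives $I\le v([\#I])\Leftrightarrow I\le vw([\#I])$, because $v([\#I])$ and $vw([\#I])$ differ only in their $B_w$-entries while $I$ avoids $B_w$. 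Hence $R^k_{J,L}\vert_{X_{vw}}=R^k_{J,L}\vert_{X_v}$, the second alternative of the dichotomy cannot introduce new terms, and the monomial persists on $X_{vw}$.

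The routine part is the transfer of the dichotomy and the two-case bookkeeping. The real obstacle is the localisation step: proving that a monomial of $\init_{\bf w}(\mathcal{I}_v)$ can always be realised by a relation supported on coordinates disjoint from $B_w$. This needs a careful reading of the isomorphism of Corollary~\ref{cor:smallflag} (to confine the swapped entries to a contiguous sub-block of $B_v$, whose convex hull avoids $B_w$ by non-adjacency) together with the elementary agreement statement $I\cap B_w=\emptyset\Rightarrow(I\le v\Leftrightarrow I\le vw)$, which I would prove directly from \eqref{eqn:BruhatOrderOnGr} by verifying that the threshold counts $\#\{x\in v([k]):x\le t\}$ and $\#\{x\in vw([k]):x\le t\}$ agree at every $t\notin B_w$ and at the top of each maximal block of $B_w$, where the comparison against the fixed set $I$ is binding.
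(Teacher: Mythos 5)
The paper itself offers no written proof of this corollary: it is presented as an immediate consequence of Lemma~\ref{lem:comm.Schubert}, which is in turn stated without proof. Your reduction of part (1) to part (2), and your observation that the bare dichotomy of Lemma~\ref{lem:comm.Schubert} does \emph{not} immediately yield (2) --- because the dichotomy may select the $X_w$-side for the very relation that produces the monomial on $X_v$, and that restriction can carry extra surviving terms --- are both correct and go beyond what the paper writes down. The problem lies in your localisation step. The claim that a monomial-producing relation for $X_v$ can be chosen with all Pl\"ucker coordinates indexed by sets $I$ satisfying $I\cap B_w=\emptyset$ is false. By the proof of Lemma~\ref{lem:smallflag}, the relations that are non-trivial on $X_v$ are supported on index sets of the form $[i]\cup\{l_1,\dots,l_s\}$ with the $l_j$ in the block of $\mathcal{S}_v$; the mandatory prefix $[i]$ meets $B_w$ whenever $\mathcal{S}_w$ sits below $\mathcal{S}_v$. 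Concretely, for $\mathcal{S}_w=\{s_1\}$ and $\mathcal{S}_v=\{s_3,s_4\}$ in $S_5$, the monomial for $v=s_3s_4$ arises from coordinates such as $p_{\{1,2,4\}}$ and $p_{\{1,2,3,5\}}$, all of which contain $B_w=\{1,2\}$. Your closing paragraph discusses confining the \emph{swapped} entries to $B_v$, but the obstruction is the fixed prefix $[i]$, which your sketch does not address; so the equivalence $I\le v([\#I])\Leftrightarrow I\le vw([\#I])$ is not established for the coordinates you actually need.

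The repair is to localise on the \emph{cardinalities} of the index sets rather than on the sets themselves. Since $v$ fixes $B_w$ pointwise, one has $vw([m])=v(w([m]))=v([m])$ for every $m$ with $w([m])=[m]$ as a set, and the set $D_w=\{m: w([m])\ne[m]\}$ is contained in the subscript set $\{j_1,\dots,j_s\}$ of $\mathcal{S}_w$. Hence for any relation $R^k_{J,L}$ with $\#J,\#L\notin D_w$, the vanishing conditions defining $X_v$ and $X_{vw}$ agree on every coordinate occurring in the relation, so $\init_{\bf w}(R^k_{J,L})\vert_{X_{vw}}=\init_{\bf w}(R^k_{J,L})\vert_{X_v}$ and the monomial persists. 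The relations singled out by Lemma~\ref{lem:smallflag} have cardinalities lying among the subscripts of $\mathcal{S}_v$, which are disjoint from those of $\mathcal{S}_w$ by hypothesis, so this applies to them; what then still deserves a sentence is why a monomial in $\init_{\bf w}(\mathcal{I}_v)$ can always be traced to a relation of this restricted type (a relation with, say, $\#J\in D_w$ has $v([\#J])=[\#J]$, and one must rule out, or separately handle, the possibility that only such a relation degenerates to a monomial). With that adjustment the argument closes; as written, the key equivalence rests on a false premise.
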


\begin{remark}From the previous corollary we see that the bigger $n$ is, the more Schubert varieties become reducible after degenerating them \`a la Feigin, since there are several ways of embedding $S_m$ into $S_n$ for $m<n$ as a parabolic subgroup.
Indeed, the number of permutations  $v\in S_n$ such that at least one Pl\"ucker relation degenerates to a monomial in $\init_{\bf w}(\mathcal{I}_v)$ is 0,1,11,85 for $n=2,3,4,5$, respectively. As a curiosity, we mention here that there is exaclty one sequence in the On-Line Encyclopedia of Integer Sequences \cite[Sequence A129180]{OEIS} whose first four terms are 0, 1, 11, 85, namely the \emph{Total area below all Schroeder paths of semilength n}. 
\end{remark}

\section{Degenerate Schubert and Richardson varieties}\label{sec:rich}

In this section we explore how degenerate Schubert varieties behave under the embedding 
of the degenerate flag variety $\Flag_n^a$ into a larger partial flag variety given by Cerulli Irelli and the second author in \cite{C-IL15}. 

\subsection{Degenerate flag varieties and flag varieties of higher rank}
We start by introducing some notation and recalling the main result of \cite{C-IL15}.

Let $\omega_i$ denote the $i$-th fundamental weight for $SL_{2n-2}$ and consider the parabolic subgroup $P:=P_{\omega_1+\omega_3+\dots +\omega_{2n-3}}$ of $SL_{2n-2}$. 
Then, $SL_{2n-2}/P$ is the variety of (partial) flags in $\mathbb{C}^{2n-2}$ whose points are flags of vector spaces of odd dimensions. 
Its Schubert varieties $\widetilde{X}_w$ are indexed by minimal length coset representatives $w\in S_{2n-2}/W_{P}$, where $W_P$ is the Weyl group of the Levi of $P$. 
More precisely, if $\widetilde{s_i}\in S_{2n-2}$ denotes the simple transposition $(i,i+1)$, then $W_P=\langle \widetilde{s}_2,\widetilde{s}_4,\ldots\widetilde{s}_{2n-4}\rangle$.
Let $w_n\in S_{2n-2}$ be defined by
\[
w_n(i)=\left\{
\begin{array}{ll}
r & \hbox{ if }i=2r,r\ge 1,\\
n+r-1 & \hbox{ if } i=2r-1,r\in[n-1].
\end{array}
\right.
\]
The following Theorem can be found in \cite{C-IL15}.
\begin{theorem}[\cite{C-IL15}]\label{thm:CI-L}
The degenerate flag variety $\Flag_n^a$ is isomorphic to the Schubert variety $\widetilde{X}_{w_n}\subset SL_{2n-2}/P$. 
\end{theorem}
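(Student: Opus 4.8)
The plan is to construct the isomorphism explicitly at the level of Pl\"ucker coordinates, matching the degenerate Pl\"ucker relations cutting out $\Flag_n^a$ with the (ordinary) Pl\"ucker relations cutting out $\widetilde{X}_{w_n}$. The first step is to pair up the factors of the two ambient products of projective spaces. The variety $\Flag_n^a$ sits in $\prod_{k=1}^{n-1}\mathbb{P}(\wedge^k\mathbb{C}^n)$, while $SL_{2n-2}/P$ sits in $\prod_{k=1}^{n-1}\mathbb{P}(\wedge^{2k-1}\mathbb{C}^{2n-2})$, since $P=P_{\omega_1+\omega_3+\dots+\omega_{2n-3}}$ records exactly the odd dimensions $1,3,\dots,2n-3$. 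I therefore pair the $k$-th factor $\Gr(k,\mathbb{C}^n)$ with $\Gr(2k-1,\mathbb{C}^{2n-2})$. The target $\widetilde{X}_{w_n}$ lives inside the coordinate subspace where all $p_{\widetilde I}$ with $\widetilde I\not\le w_n([2k-1])$ vanish, so it suffices to match $\Flag_n^a$ with $\widetilde{X}_{w_n}$ inside that subspace.

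Next I would set up the combinatorial bijection on surviving coordinates. From the explicit formula for $w_n$ one computes
\[
w_n([2k-1])=[k-1]\cup[n,n+k-1],
\]
so that, by the Grassmannian Bruhat order \eqref{eqn:BruhatOrderOnGr}, the coordinates not vanishing on $\widetilde{X}_{w_n}$ in the $(2k-1)$-st factor are precisely the $p_{\widetilde I}$ with $\widetilde I=[k-1]\cup\{a_1<\dots<a_k\}$, $a_1\ge k$ and $a_m\le n+m-1$. Writing $c_m=a_m-m+1$ turns these into weakly increasing sequences $k\le c_1\le\dots\le c_k\le n$, which number $\binom{n}{k}$; this equals the number of Pl\"ucker coordinates on $\Gr(k,\mathbb{C}^n)$. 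Fixing the standard bijection between size-$k$ subsets of $[n]$ and such multisets yields a bijection $\phi_k\colon\binom{[n]}{k}\to\{\widetilde I\le w_n([2k-1])\}$, and hence (sending $p_I\mapsto p_{\phi_k(I)}$) a linear isomorphism $\phi$ of $\prod_{k}\mathbb{P}(\wedge^k\mathbb{C}^n)$ with the relevant coordinate subspace.

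The heart of the argument is to show that $\phi$ carries $\init_{\bf w}(\mathcal{I}_{\Flag_n})$ onto the ideal of $\widetilde{X}_{w_n}$ inside that subspace. Concretely, I would take an ordinary Pl\"ucker relation of $SL_{2n-2}/P$ relating the factors $2e-1$ and $2d-1$, reduce it modulo the vanishing coordinates $p_{\widetilde I}$, $\widetilde I\not\le w_n(\cdot)$, and check that its surviving quadratic monomials are exactly the $\phi$-images of the monomials appearing in a degenerate relation $\init_{\bf w}(R^k_{J,L})$ with $\#J=e$, $\#L=d$. The decisive point to verify is that the survival condition $\{l_{r_1},\dots,l_{r_k}\}\cap[e,d-1]=\emptyset$ governing $\init_{\bf w}(R^k_{J,L})$ translates, under $\phi$, into the Bruhat non-vanishing condition on $\widetilde{X}_{w_n}$. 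Since both ideals are generated in Pl\"ucker degrees, a generator-by-generator match of this kind identifies the two ideals and so gives the variety isomorphism, with no appeal to projective normality.

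The main obstacle is exactly this relation-matching: one must track signs and, on each side, precisely which quadratic terms survive, and confirm that the larger Grassmannian relations collapse to the degenerate relations and to nothing more. As consistency checks before carrying out the bookkeeping, I would verify the dimension count $\ell(w_n)=\binom{n}{2}=\dim\Flag_n^a$ (a direct inversion count for $w_n=[n,1,n+1,2,\dots,2n-2,n-1]$) and test the bijection and relation-matching for small $n$, e.g.\ $n=3$. An alternative route that avoids the Pl\"ucker bookkeeping is representation-theoretic: one identifies the homogeneous coordinate ring $\bigoplus_\lambda (V^a(\lambda))^*$ of $\Flag_n^a$ with the coordinate ring $\bigoplus_\lambda V_{w_n}(\lambda)^*$ of $\widetilde{X}_{w_n}$ by matching the FFLV monomial basis of the PBW-degenerate modules with a standard monomial (Demazure) basis; this is the approach behind \cite{C-IL15}.
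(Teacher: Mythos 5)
Your overall strategy -- pairing $\Gr(k,\mathbb{C}^n)$ with $\Gr(2k-1,\mathbb{C}^{2n-2})$, identifying the Pl\"ucker coordinates surviving on $\widetilde{X}_{w_n}$ as those with $[k-1]\subset \widetilde I\subset[n+k-1]$, and matching the reduced Pl\"ucker relations of $\widetilde{X}_{w_n}$ with the degenerate relations $\init_{\bf w}(R^k_{J,L})$ -- is exactly the coordinate-level description the paper gives of the isomorphism $\xi$ of \cite{C-IL15}. However, there is a genuine gap at the single most important choice: the bijection $\phi_k$. You establish only that the two coordinate sets have the same cardinality $\binom{n}{k}$ and then invoke ``the standard bijection'' between $k$-subsets and the weakly increasing sequences $c_m=a_m-m+1$. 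That standard bijection amounts to the shift $a\mapsto a-k+1$ on $[k,n+k-1]$, and it is \emph{not} the correct identification. The correct one is the cyclic map $\tau_k$ ($j\mapsto j$ for $j\in[k,n]$, $j\mapsto j-n$ for $j\in[n+1,n+k-1]$); already for $n=3$, $k=2$ the two bijections send $\{1,2,3\},\{1,2,4\},\{1,3,4\}$ to $\{2,3\},\{1,2\},\{1,3\}$ and to $\{1,2\},\{1,3\},\{2,3\}$ respectively, so they genuinely differ.

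This is not a cosmetic issue, because your ``heart of the argument'' -- the generator-by-generator matching of relations -- only has a chance of succeeding for the cyclic choice. Within a single Grassmannian factor either bijection of index sets induces a linear isomorphism and hence preserves Pl\"ucker relations, but the flag-variety relations $R^k_{J,L}$ exchange indices between factors of different sizes $e<d$, so the identifications for different $k$ must agree on the indices being exchanged. The maps $\tau_e$ and $\tau_d$ agree on $[\max(e,d),n]$, which is precisely what makes the exchanged indices correspond and makes the terms dropped in $\init_{\bf w}(R^k_{J,L})$ (those with $\{l_{r_1},\dots,l_{r_k}\}\cap[e,d-1]\neq\emptyset$) land on terms killed by the Schubert vanishing conditions; the shifts $j\mapsto j-e+1$ and $j\mapsto j-d+1$ agree nowhere for $e\neq d$, so the cross-factor relations would not match at all. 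The underlying reason the cyclic choice works is that it is induced by an actual linear projection $\pi_k$ of ambient vector spaces (as in the paper's construction of $\xi$), not merely by a counting argument. To repair the proposal you should define $\phi_k$ via $\tau_k$ (equivalently, exhibit the linear maps $\pi_k$) and then actually carry out the relation-matching you defer, since that verification is the entire content of the theorem.
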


\subsubsection{Translation into Pl\"ucker coordinates}We describe here the isomorphism of Theorem \ref{thm:CI-L}
 in terms of Pl\"ucker coordinates. Recall that whenever we index Pl\"ucker coordinates by a set, we really mean the associated sequence obtained by increasingly ordering the elements of the given set.
 
Let $J\in \binom{[2n-2]}{2k-1}$, with $k\in[n-1]$, then $J\leq w_n([2k-1])=[k-1]\cup [n,n+k-1]$ if and only if
\begin{equation}\label{eqn:richardson1}
[k-1]\subset J\subset [k+n-1].
\end{equation}
 In order to give the translation of the isomorphism in terms of coordinate rings, we  need to set some notation.
Let $k\in[n-1]$, we denote by $\{\leq w_n\}^{(2k-1)}$ the set of $J\in \binom{[2n-2]}{2k-1}$, with $J\leq w_n([2k-1])$. There is hence a bijection
\begin{equation}\label{eqn:bij1}
\{\leq w_n\}^{(2k-1)}\rightarrow \binom{[n]}{k}, \quad  J\mapsto \tau_k(J\setminus [k-1])
\end{equation}
where $\tau_k:[n+k-1]\rightarrow[n]$ is given  by
\[
\tau_k(j)\mapsto\left\{
\begin{array}{ll}
%0 &\hbox{if }j\in[k-1],\\
j&\hbox{if }j\in[k,n],\\
j-n&\hbox{if }j\in[n+1,n+k-1].\\
\end{array}
\right.
\]
For a sequence $I=(i_1, \ldots, i_{k})\in\mathcal{S}(n,k)$ we set $\tau_k(I):=(\tau_k(i_1), \ldots \tau_k(i_k))\in\mathcal{S}(n,k)$.
If $\rho_k:[n]\rightarrow[k,n+k-1]$ is given  by
\[
\rho_k(j)\mapsto\left\{
\begin{array}{ll}
j&\hbox{if }j\in[k,n],\\
j+n&\hbox{if }j\in[k-1],\\
\end{array}
\right.
\]
then the inverse map to \eqref{eqn:bij1} is given by 
\[
\binom{[n]}{k} \rightarrow\{\leq w_n\}^{(2k-1)}, \quad  I\mapsto [k-1]\cup \rho_k(I).
\]
On the level of sequences, this lifts to a map
\[
\begin{array}{ccc}
\mathcal{S}(n,k)&\stackrel{\widetilde{\rho_k}}{\rightarrow}&\left\{J\in\mathcal{S}(2n-2,2k-1)\mid F(J)\in \{\leq w_n\}^{(2k-1)}\right\},\\
(i_1, \ldots, i_{k})&\mapsto& (1,2,\ldots,k-1,\rho_k(i_1), \ldots, \rho_k(i_k))
\end{array}\]

Fix an ordered basis $(\tilde{e}_j)_{j\in[2n-2]}$ of $\mathbb C^{2n-2}$, then the linear algebraic description of $\widetilde{X}_{w_n}$ is
\[
\widetilde{X}_{w_n}=\left\{
\{0\}\subset W_1\subset W_3\subset\ldots\subset W_{2n-3}\left|
\begin{array}{c}
W_{2k-1}\in\textrm{Gr}(2k-1,\mathbb C^{2n-2})\\
\textrm{span}_{\mathbb{C}}\{\widetilde{e}_j\mid j\in[k-1]\}\subset W_{2k-1},\\
W_{2k-1}\subset \textrm{span}_{\mathbb{C}}\{\widetilde{e}_j\mid j\in[n+k-1]\}.
\end{array}\right.
\right\}
\]
Denote by $(e_i)_{i\in[n]}$ an ordered basis for $\mathbb{C}^n$. %For notational convenience we set $e_0:=0$. 
For $k\in[n-1]$ define
the projection operator (which we also denote by $\pi_k$ as in \cite{C-IL15})
\[
\begin{array}{rrcc}
\pi_{k}&:\textrm{span}_{\mathbb{C}}\{\widetilde{e}_j\mid j\in[n+k-1]\}&\rightarrow&\mathbb{C}^n=\textrm{span}_{\mathbb{C}}\{e_i\mid i\in[n]\},\\
& \widetilde{e}_j&\mapsto& 
\left\{
\begin{array}{ll}
e_{\tau_k(j)}&\hbox{if }j\in[k,n+k-1],\\
0&\hbox{otherwise}
\end{array}
\right.
\end{array}\]
Then there is an isomorphism, which we denote by the same symbol, of algebraic varieties
\begin{eqnarray*}
\widetilde{X}_{w_n}^{(2k-1)}:=\left\{
U \left|
\begin{array}{c}
U\in\textrm{Gr}(2k-1,\mathbb C^{2n-2})\\
\textrm{span}_{\mathbb{C}}\{\widetilde{e}_j\mid j\in[2i-2]\}\subset U,\\
U\subset \textrm{span}_{\mathbb{C}}\{\widetilde{e}_j\mid j\in[n+2k-2]\}.
\end{array}\right.
\right\}
&\stackrel{\pi_{k}}{\longrightarrow}& \textrm{Gr}(k,\mathbb C^n),\\
 U &\mapsto& \pi_{k}(U)
\end{eqnarray*}
and the desired isomorphism (cf. \cite{C-IL15})  is given by
\begin{equation}\label{eqn:isoCIL15}
\xi:\widetilde{X}_{w_n}\rightarrow\Flag_n^a,\quad (W_{2k-1})_{k\in[n-1]}\mapsto (\pi_{k}(W_{2k-1}))_{k\in[n-1]}.
\end{equation}

 \begin{remark}In \cite{C-IL15}, an embedding of $\zeta:\Flag_n\hookrightarrow SL_{2n-2}/P$ is given, and hence the isomorphism from Theorem \ref{thm:CI-L} is rather the inverse of the isomorphism $\xi$ we consider here. We prefer to work with $\xi$ instead of $\zeta$ since in this way we obtain an induced map from the coordinate ring of $\Flag_n^a$ to the coordinate ring of $\widetilde{X}_{w_n}$, which we make explicit in the following.% in order to work with Pl\"ucker coordinates and relations.
 \end{remark}

For $SL_{2n-2}/P$ we also have an embedding into the product of Grassmannians
\[
SL_{2n-2}/P\hookrightarrow \Gr(1,\mathbb C^{2n-2})\times Gr(3,\mathbb C^{2n-2})\times\dots\times\Gr(2n-3,\mathbb C^{2n-2}),
\]
and hence a Pl\"ucker embedding.
Pl\"ucker coordinates for $\Gr(2k-1,\mathbb C^{2n-2})$ with $k\in[n-1]$ are denoted by $\tilde p_J, J\in\mathcal{S}(2n-2, 2k-1)$. Let $I=(i_1,\dots,i_k)$ then 
\[
\pi_k^*:\mathbb{C}[\textrm{Gr}(k,n)]\rightarrow \mathbb{C}[\widetilde{X}_w^{(2k-1)}], \quad p_I\mapsto \widetilde{p}_{\widetilde{\rho}_k(I)} .
\]
As $\pi_k^*$ is compatible with Pl\"ucker relations, we have an isomorphism
\[
\xi^*:\mathbb{C}[\Flag_n^a]\rightarrow\mathbb{C}[\widetilde{X}_{w_n}], \quad p_I\mapsto \pi^*_{\#I}(p_I).
\]

Notice that even if $I$ is ordered increasingly, $\widetilde{\rho_k}(I)$ needs not be ordered increasingly. To get an increasing sequence we have to multiply by some sign. While keeping track of the sign is fundamental to check that Pl\"ucker relations are satisfied, it is not relevant to us, as we only deal with vanishing of certain Pl\"ucker coordinates, which of course vanish independently of their sign.

\subsection{Richardson varieties in $SL_{2n-2}/P$}
Let $u,v\in S_{2n-2}$ be minimal length coset representatives of $S_{2n-2}/W_P$ and assume that $u\leq v$. We denote by $\widetilde{X}_v^u:=\widetilde{X}_v\cap \widetilde{X}^u\subseteq SL_{2n-2}/P$ the corresponding Richardson variety. Recall that its defining ideal in $\mathbb{C}[p_I\mid\#I\equiv 1\textrm{\small{(mod 2)}},I\subset[2n-2]]$ is
\begin{equation}\label{eq:idealRichardson}
\mathcal{I}_v^u=(R^k_{J,L})+(p_I)_{I\not\le v([\#I])}+(p_I)_{I\not\ge u([\#I])}.
\end{equation}

In the following we will show that for appropriate permutations $x\in S_n$, $u,v\in S_{2n-2}$ with $u\leq v\leq w_n$, 
the isomorphism $\xi^*$ induces an isomorphism between the coordinate rings
\[
\mathbb{C}[X^a_x]\rightarrow \mathbb{C}[\widetilde{X}^u_v].\]
To stress out the fact that such an isomorphism really comes from the embedding $\zeta$, we will express it as $\zeta(X^a_x)=\widetilde{X}_v^u$.

Since $\mathbb{C}[X^a_x]=
\mathbb{C}[\Flag_n^a]/(p_I\mid I\nleq x([\#I]))$ and $\mathbb{C}[\widetilde{X}^u_v]=\mathbb{C}[SL_{2n-2}/P]/(p_K\mid K\nleq v([\#K]), \ K\ngeq u([\#K]))$, the claim will be proven by verifying that
\begin{equation}\label{eqn:richardson2}
\left(
(K\leq v([\#K])\hbox { and }K\geq u([\#K])
\right)
  \quad\Rightarrow\quad 
  \tau_{k}(K\setminus[k-1])  \leq x([k]),
\end{equation}
where $k:=\frac{\#K+1}{2}$,
and the opposite direction
\begin{equation}\label{eqn:richardson3}
I\leq x(\#I)\quad\Rightarrow \quad
\left(
\begin{array}{c}
[k-1]\cup \rho_{\#I}(I)\leq v([n-1+\#I])\\
{[k-1]\cup \rho_{\#I}(I)\geq u([n-1+\#I])}
\end{array}
\right).  
\end{equation}

An important role will be played by the following permutation $y_n\in S_{2n-2}$:
\[
y_n(i)=\left\{
\begin{matrix}
1 & \text{ if } i=1,\\
r+1 & \text{ if } i=2r,r\in[n-1],\\
n+r-1 & \text{ if } i=2r-1,r\in[n-1].
\end{matrix}
\right.
\]
Notice that for any $m\in[n-1]$
\[
\tilde{s}_m\tilde{s}_{m-1}\ldots \tilde{s}_1y_n(i)=
\left\{
\begin{matrix}
m+1 & \text{ if } i=1,\\
r & \text{ if } i=2r,\ r\in[m],\\
r+1& \text{ if } i=2r, \ r\in[m+1,n-1],\\
n+r-1 & \text{ if } i=2r-1,r\in[n-1],
\end{matrix}
\right.
\]
and, by \eqref{eq:BBequiv}, $y_n< \tilde{s}_m\tilde{s}_{m-1}\ldots \tilde{s}_1y_n\leq w_n$.

\begin{lemma}\label{lem:richardson}
Let $m\in[n-1]$ and $x:=s_{m}s_{m-1}\dots s_1\in S_n$. Then, \[\zeta(X_{x}^a)=\widetilde{X}_{\tilde{s}_m\tilde{s}_{m-1}\ldots \tilde{s}_1y_n}^{y_n}.\]
\end{lemma}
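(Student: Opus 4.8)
The plan is to verify the two set-theoretic implications \eqref{eqn:richardson2} and \eqref{eqn:richardson3} directly, since by the discussion preceding the lemma these are precisely what is needed to conclude $\zeta(X_x^a)=\widetilde{X}_{\tilde{s}_m\cdots\tilde{s}_1y_n}^{y_n}$. First I would record the one-line notation of $x=s_ms_{m-1}\cdots s_1\in S_n$. A short computation shows $x=[m+1,1,2,\dots,m,m+2,\dots,n]$, so that $x([k])=[k-1]\cup\{m+1\}$ for $k\le m+1$ and $x([k])=[k]$ for $k\ge m+1$; in particular, by \eqref{eqn:BruhatOrderOnGr}, a set $I\in\binom{[n]}{k}$ satisfies $I\le x([k])$ exactly when $I\subset[k-1]\cup\{m+1\}$ for $k\le m+1$ and always when $k\ge m+1$. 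The other two ingredients are the explicit one-line notations of $y_n$ and $\tilde{s}_m\cdots\tilde{s}_1y_n$, both of which are already spelled out in the excerpt, together with the characterization \eqref{eqn:richardson1} of the sets $K$ with $K\le w_n([2k-1])$, namely $[k-1]\subset K\subset[k+n-1]$.

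Next I would translate the Bruhat conditions on $K\in\binom{[2n-2]}{2k-1}$ into conditions on $I:=\tau_k(K\setminus[k-1])\in\binom{[n]}{k}$ using the bijection \eqref{eqn:bij1}. Here the key observation is that the upper bound $K\le(\tilde{s}_m\cdots\tilde{s}_1y_n)([2k-1])$ and the lower bound $K\ge y_n([2k-1])$, once unwound through \eqref{eqn:BruhatOrderOnGr} and the explicit values of these two permutations, should pin down exactly which elements of $[k+n-1]$ may or must appear in $K$. I expect the lower bound $K\ge y_n([2k-1])$ to force $[k-1]\subset K$ (consistent with \eqref{eqn:richardson1}) and to control the presence of the ``large'' indices in $[n+1,n+k-1]$, while the upper bound $K\le(\tilde{s}_m\cdots\tilde{s}_1y_n)([2k-1])$ should restrict the one remaining ``new'' index. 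Applying $\tau_k$ and comparing with the description of $\{I\le x([k])\}$ from the previous paragraph should then yield \eqref{eqn:richardson2}; the reverse implication \eqref{eqn:richardson3} is obtained by running the same dictionary backwards through $\rho_k$ and $\widetilde{\rho_k}$.

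The main obstacle will be the bookkeeping in \eqref{eqn:richardson2}, because the relation $K\le v([\#K])$ with $v=\tilde{s}_m\cdots\tilde{s}_1 y_n$ has to be matched against the $k$-dependent cases in $x([k])$ (namely $k\le m+1$ versus $k\ge m+1$), and one must check that the two Richardson bounds on $K$ interact correctly across all parities and all ranges of $k$. In particular I would need to keep careful track of how the index $m+1$, which is the distinguished ``displaced'' value of $x$, corresponds under $\tau_k$ and $\rho_k$ to the specific index of $\mathbb{C}^{2n-2}$ singled out by the reflections $\tilde{s}_m\cdots\tilde{s}_1$ acting on $y_n$. Once this single correspondence is pinned down, both implications should follow by a direct case analysis on $k$; I do not anticipate any structural difficulty beyond the index arithmetic, so the proof reduces to carefully verifying \eqref{eqn:richardson2} and \eqref{eqn:richardson3} and then invoking the general principle, established just before the lemma, that these imply the claimed equality of coordinate rings and hence $\zeta(X_x^a)=\widetilde{X}_{\tilde{s}_m\cdots\tilde{s}_1y_n}^{y_n}$.
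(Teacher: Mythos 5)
Your overall strategy is exactly the paper's: verify the two implications \eqref{eqn:richardson2} and \eqref{eqn:richardson3} by explicitly describing, on one side, the sets $I$ with $I\le x([k])$ and, on the other, the sets $K$ squeezed between $y_n([2k-1])$ and $\tilde{s}_m\cdots\tilde{s}_1y_n([2k-1])$, and then matching the two lists through $\tau_k$ and $\rho_k$. However, the one computation you actually commit to is wrong in both of its clauses, and since it is the foundation for the matching, the plan as written would not go through. For $k\le m$ you claim $I\le x([k])$ holds exactly when $I\subset[k-1]\cup\{m+1\}$; but $\#I=k=\#\bigl([k-1]\cup\{m+1\}\bigr)$, so containment forces $I=[k-1]\cup\{m+1\}$, which is a single set, whereas the correct answer (via \eqref{eq:coxeter}, or directly from \eqref{eqn:BruhatOrderOnGr} applied to the sorted tuple $(1,\dots,k-1,m+1)$) is the family $I=[k-1]\cup\{b\}$ with $b\in[k,m+1]$. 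In the other direction, for $k\ge m+1$ you claim the condition holds ``always''; but $x([k])=[k]$ is the \emph{minimal} element of $\binom{[n]}{k}$ in the induced Bruhat order, so $I\le[k]$ forces $I=[k]$, not all of $\binom{[n]}{k}$. The first error under-counts and the second over-counts, and either one would break the asserted equality of vanishing loci: with the stated characterizations, \eqref{eqn:richardson3} fails for $I=[k-1]\cup\{b\}$, $k\le b\le m$, and \eqref{eqn:richardson2} fails for essentially every $K$ with $\#K\ge 2m+1$.

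Once these are corrected, the rest of your outline does match the paper's argument: the two Bruhat bounds on $K$ pin it down to $K=[k-1]\cup\{i\}\cup[n+1,n+k-1]$ with $i\in[k,m+1]$ for $k\le m$ and $K=[k]\cup[n+1,n+k-1]$ for $k>m$, and $\tau_k$ carries these precisely onto the corrected list of $I\le x([k])$, giving both implications at once with no further case analysis.
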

\begin{proof} 
Let $I\in\binom{[n]}{k}$. Then, by \eqref{eq:coxeter}, $I\leq x([k])$ if and only if
%In $\Gr(k,\mathbb C^n)$ we have $X_{s_{m}s_{m-1}\dots s_1}^a$ given by the vanishing of all Pl\"ucker coordinates but those of form
\begin{eqnarray*}
I=\left\{ \begin{matrix} 
[k-1]\cup\{i\}, i\in[k,m+1] & \text{ if } k\le m,\\
[k] & \text{ if } k>m.
\end{matrix} \right.
\end{eqnarray*}
On the other hand, let $K\in \binom{2n-2}{2k-1}$, then both $K\leq \tilde{s}_m\tilde{s}_{m-1}\ldots \tilde{s}_1y_n([2k-1])$  and $K\geq y_n([2k-1])$ hold if and only if %\eqref{eqn:richardson1} holds
\begin{eqnarray*}
K=\left\{\begin{matrix}
[k-1]\cup[n+1,n+k-1]\cup\{i\},i\in[k,m+1] & \text{ if } k\le m, \\
[k]\cup [n+1,n+k-1] & \text{ if } k>m.
\end{matrix}\right.
\end{eqnarray*}
These two facts imply \eqref{eqn:richardson2} and \eqref{eqn:richardson3}.
\end{proof}

Combining Lemma~\ref{lem:richardson} with Proposition~\ref{prop:isomschubert} we obtain the following corollary.
\begin{corollary}
Let $x=s_{m}s_{m-1}\cdots s_1\le c$ and consider the Schubert variety $X_x\subset \Flag_n$. Then there is an isomorphism
\[
X_v\cong\widetilde{X}_{\tilde{s}_{m}\tilde{s}_{m-1}\ldots\tilde{s}_{1}y_n}^{y_n}\subset SL_{2n-2}/P.
\]
\end{corollary}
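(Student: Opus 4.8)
The plan is simply to compose the two isomorphisms supplied by Lemma~\ref{lem:richardson} and Proposition~\ref{prop:isomschubert}, the point being that for $x=s_ms_{m-1}\cdots s_1$ the Schubert variety is not moved by the degeneration. First I would record that the hypothesis $x\leq c$ is in force (indeed it holds for every $m\in[n-1]$): in one-line notation $x=[m+1,1,2,\dots,m,m+2,\dots,n]$, so $x([k])=[k-1]\cup\{m+1\}$ for $k\leq m$ and $x([k])=[k]$ for $k>m$, and in either case \eqref{eq:coxeter} gives $x([k])\leq c([k])$. This is exactly the list of index sets appearing in the proof of Lemma~\ref{lem:richardson}.

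Since $x\leq c$, Proposition~\ref{prop:isomschubert} applies and yields $\mathcal{I}_x=\init_{\bf w}(\mathcal{I}_x)$. Because $X_x=V(\mathcal{I}_x)$ and $X_x^a=V(\init_{\bf w}(\mathcal{I}_x))$ are cut out by one and the same ideal inside $\mathbb{PC}^n\times\mathbb P(\wedge^2\mathbb C^n)\times\dots\times\mathbb P(\wedge^{n-1}\mathbb C^n)$, the Schubert variety and its degeneration literally coincide; in particular $\mathbb{C}[X_x]=\mathbb{C}[X_x^a]$.

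It remains to feed this into Lemma~\ref{lem:richardson}, which identifies the degenerate Schubert variety with a Richardson variety via $\zeta$, namely $\zeta(X_x^a)=\widetilde{X}^{y_n}_{\tilde{s}_m\tilde{s}_{m-1}\cdots\tilde{s}_1y_n}$; concretely the ring isomorphism $\xi^*$ restricts to $\mathbb{C}[X_x^a]\cong\mathbb{C}[\widetilde{X}^{y_n}_{\tilde{s}_m\tilde{s}_{m-1}\cdots\tilde{s}_1y_n}]$. Combining this with the equality $\mathbb{C}[X_x]=\mathbb{C}[X_x^a]$ from the previous step produces the desired isomorphism $X_x\cong\widetilde{X}^{y_n}_{\tilde{s}_m\tilde{s}_{m-1}\cdots\tilde{s}_1y_n}\subset SL_{2n-2}/P$ (the $X_v$ in the statement should read $X_x$). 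I do not expect any genuine obstacle: the entire substance of the corollary is that the condition $x\leq c$ is precisely what makes Lemma~\ref{lem:richardson} and Proposition~\ref{prop:isomschubert} composable, since the latter guarantees that $X_x$ and $X_x^a$ agree.
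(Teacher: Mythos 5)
Your proposal is correct and is exactly the paper's argument: the authors derive the corollary by combining Lemma~\ref{lem:richardson} with Proposition~\ref{prop:isomschubert}, the latter giving $\mathcal{I}_x=\init_{\bf w}(\mathcal{I}_x)$ so that $X_x$ and $X_x^a$ coincide, and the former identifying $X_x^a$ with the Richardson variety. Your observation that $X_v$ in the statement should read $X_x$ is also right.
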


\section{Schubert divisors}\label{sec:codim1}
In this section we focus on Schubert divisors and apply the results from previous sections to them.
In this case we can completely answer the question whether or not they stay irreducible under the degeneration. 

Let $w_0\in S_n$ be the longest element, then all Schubert divisors are indexed by permutations of the form $w=w_0s_i$ for $i\in[n-1]$. Note that
\begin{equation*}
w(k)=\left\{
\begin{array}{ll}
n-k+1&\hbox{ if }k\neq i,i+1,\\
n-i&\hbox{ if }k=i,\\
n-i+1&\hbox{ if }k= i+1.
\end{array}
\right.
\end{equation*}

The following Theorem~\ref{cor:redcodim1} is an application of Theorem~\ref{thm:alllemma}~(1) and (2).

\begin{theorem}\label{cor:redcodim1}
Let $n>2$ and $w\in S_n$ be such that $ws_i=w_0$. If $n$ is odd assume $i\not=\frac{n+1}{2}$, for even $n$ there is no additional assumption. Then $X^a_w$ is reducible.
\end{theorem}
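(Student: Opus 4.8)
The plan is to apply the reducibility criteria of Theorem~\ref{thm:alllemma}, specifically points (1) and (2), to the permutation $w=w_0s_i$. The first step is to record the one-line notation of $w$ explicitly: as stated just before the theorem, $w(k)=n-k+1$ for $k\neq i,i+1$, while $w(i)=n-i$ and $w(i+1)=n-i+1$. This is almost the reverse permutation, with the two middle entries in positions $i$ and $i+1$ swapped relative to $w_0$. From this I can read off the sets $w([i-1])=\{n,n-1,\dots,n-i+2\}=[n-i+2,n]$ and $\{w(i+1)\}=\{n-i+1\}$, together with $w(i)=n-i$, and check the ascent/descent condition $ws_i>w$ (which holds precisely because $w(i)=n-i<n-i+1=w(i+1)$).

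The main idea is to split into two regimes according to the size of $i$, and in each regime verify the hypotheses of one of the two criteria. For the regime where $i$ is not too large (so that $n-i\geq i$, i.e.\ $i\leq n/2$), I would apply Theorem~\ref{thm:alllemma}~(1): I need to produce a $j\in[n]$ with $j\leq w(i)=n-i$, $j\neq i$, and $i,j\notin w([i-1])\cup\{w(i+1)\}=[n-i+1,n]$. Since $i\leq n-i$ here, the index $i$ itself lies outside $[n-i+1,n]$, and I expect a suitable small $j$ (for instance $j$ chosen distinct from $i$ in the range $[1,n-i]\setminus[n-i+1,n]$) to exist whenever $n>2$. For the complementary regime where $i$ is large (so $i-1\geq n-i+1$, forcing $i>n/2$), I would apply Theorem~\ref{thm:alllemma}~(2) with the value $x=i-1$: here $i-1$ and $x=i-1$ both lie in $w([i-1])\cup\{w(i+1)\}=[n-i+1,n]$ because $i-1\geq n-i+1$, and I would choose $l\leq w(i)=n-i$ outside that set, using $i\in[3,n-1]$ which is forced once $i$ is large and $n$ is large enough.

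The delicate point — and where the hypothesis on the parity of $n$ enters — will be the boundary between the two regimes, namely the values of $i$ near $n/2$. When $n$ is odd and $i=(n+1)/2$, one checks $w(i)=n-i=(n-1)/2$ while $i=(n+1)/2>(n-1)/2$, so the condition $i\leq w(i)$ of criterion (1) fails (indeed $i\notin[n-i+1,n]$ borderline-fails too), and simultaneously $i-1=(n-1)/2<n-i+1=(n+1)/2$ so $i-1\notin w([i-1])\cup\{w(i+1)\}$ and criterion (2) cannot be fed $x=i-1$ either; this is exactly the excluded case, and the theorem makes no claim there. For even $n$ there is no integer $i$ sitting exactly at the symmetric point $n/2$ with both criteria failing — the two regimes $i\leq n/2$ and $i>n/2$ together cover all admissible $i$ — which is why no parity assumption is needed. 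The bulk of the remaining work is the bookkeeping verifying, in each regime, that all the numerical side-conditions of the respective criterion (the distinctness and membership constraints, and that $i$ lies in the correct range such as $i\in[3,n-1]$ for point (2)) are genuinely satisfied; I expect these to reduce to elementary inequalities in $i$ and $n$, and the main obstacle is simply organizing the case division cleanly so that every $i\in[n-1]$ (with the single odd exception excluded) is handled by exactly one of the two criteria.
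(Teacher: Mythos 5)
Your overall strategy -- read off the one-line form of $w=w_0s_i$, note $w([i-1])\cup\{w(i+1)\}=[n-i+1,n]$, and split according to whether $i\le n/2$ (criterion (1)) or $i>n/2$ (criterion (2)) -- is exactly the paper's strategy (the paper just refines each regime into two subcases to make the witnesses explicit). Your treatment of the first regime is correct: for $i\le n/2$ and $n>2$ one has $i\le w(i)=n-i$, $i\notin[n-i+1,n]$, and $[1,n-i]$ contains at least two elements, so a valid $j\ne i$ with $j\le n-i$ exists.

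However, your second regime contains a step that fails as written: you feed criterion (2) the value $x=i-1$, but the hypothesis of Theorem~\ref{thm:alllemma}~(2) explicitly requires $x\ne i-1$. This is not a cosmetic restriction -- in the proof of that criterion the sequence $J$ satisfies $F(J)=(w([i-1])\cup\{w(i+1)\})\setminus\{i-1\}$ with $j_1=x$, so $x$ must lie in a set from which $i-1$ has been removed; with $x=i-1$ the construction of the Pl\"ucker relation collapses. The error is repairable, since $w([i-1])\cup\{w(i+1)\}=[n-i+1,n]$ has $i\ge 2$ elements and therefore contains some $x\ne i-1$ with $w(i+1)=n-i+1\le x$; the paper takes $x=w(i+1)$ when $i\ge\frac{n+3}{2}$ (where $n-i+1\ne i-1$) and $x\in w([i-1])=[n-i+2,n]$ when $i=\frac{n}{2}+1$ (where $w(i+1)=i-1$ would be forbidden). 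You should also verify $i-1\in[n-i+1,n]$, i.e.\ $i\ge\frac{n+2}{2}$, which does hold throughout your second regime once the odd case $i=\frac{n+1}{2}$ is excluded, but as stated your argument for this regime is invalid until the choice of $x$ is corrected.
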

\begin{proof}
We consider four cases separately: $i<\frac{n}{2}$, $i=\frac{n}{2}$, $i\geq \frac{n+3}{2}$, and $i=\frac{n+2}{2}$. Notice that they cover all possiblities, since $i> \frac{n}{2}$ together with the assumption  $i\neq\frac{n+1}{2}$ implies  $i>\frac{n+1}{2}$, hence $i\geq \frac{n+2}{2}$. We will deal with the first two cases by applying Theorem~\ref{thm:alllemma}~(1), while we will use Theorem~\ref{thm:alllemma}~(2) for the remaining two.

First of all, notice that $w_0=ws_i>w$. 

\noindent
\underline{Case 1:} If $i<\frac{n}{2}$, then 
\begin{equation}\label{eqn:redcodim1a}
w(k)=n-k+1\ge n-i+2>\frac{n}{2}+2>i, \quad \hbox{ for any }k\leq i-1,
\end{equation}
\[w(i)=n-i>\frac{n}{2}>i,\]
and
\begin{equation}\label{eqn:redcodim1b}
w(i+1)=n-i+1>\frac{n}{2}+1>i.
\end{equation} 
We conclude that $i\not\in w([i+1])$ and we can hence apply Theorem~\ref{thm:alllemma}~(1) with $j=w(i)$.

\noindent
\underline{Case 2:} If $i=\frac{n}{2}$, then \eqref{eqn:redcodim1a} and \eqref{eqn:redcodim1b} still hold, but $w(i)=i$, so that $i\not\in w([i-1])\cup\{w(i+1)\}$, but we cannot choose $j=w(i)$.  
Nevertheless, \eqref{eqn:redcodim1a} and \eqref{eqn:redcodim1b} imply that  any $j$ with $j\leq i-1<i=w(i)$ (which exists, since $n>2$) fulfills the hypotheses of Theorem~\ref{thm:alllemma}~(1).

\noindent
\underline{Case 3:}
Let $i\geq\frac{n+3}{2}$, so that $n\leq 2i-3$ and 
$n-i+2\leq  2i-3-i+2=i-1$. Note further that $w(i+1)=n+i-1\le \frac{n+3}{2}-1\le i-1$. Thus $w(n-i+2)=i-1\in w([i-1])$ and we can apply Theorem~\ref{thm:alllemma}~(2) with $l=w(i)$ and $x=w(i+1)$.

\noindent
\underline{Case 4:} Consider $i=\frac{n}{2}+1$. In this case, $w(i+1)=n-i+1=\frac{n}{2}=i-1\in w([i-1])\cup \{w(i+1)\}$ and we can apply Theorem~\ref{thm:alllemma}~(2) with $x$ any element in $w([i-1])$ and $l=w(i)$.
\end{proof}

For flag varieties $\Flag_n$ with $n$ odd, the next proposition explains why the case of $w_0s_i$ for $i=\frac{n+1}{2}$ is special. This is another instance, of a degenerate Schubert variety being isomorphic to a Richardson variety in $SL_{2n-2}/P$. However,  unlike the degenerate Schubert varieties of form $X^a_v$, for $v\leq c$,  this one is not isomorphic to the original Schubert variety.

\begin{proposition}\label{prop:oddrichardson}
Let $i\geq 2$ and $n=2i-1$. Then
$\zeta(X_{w_0s_{i}}^a)= \widetilde{X}_{w_n}^{\widetilde{s}_{2i-1}}$.
\end{proposition}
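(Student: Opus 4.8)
The plan is to mimic the strategy of Lemma~\ref{lem:richardson}: I will pin down the two combinatorial conditions \eqref{eqn:richardson2} and \eqref{eqn:richardson3} for the specific permutation $x=w_0s_i\in S_n$ (with $n=2i-1$) and the target data $v=w_n$, $u=\widetilde{s}_{2i-1}\in S_{2n-2}$. Since the ambient degenerate flag variety $\Flag_n^a\cong\widetilde{X}_{w_n}$ (Theorem~\ref{thm:CI-L}), taking $v=w_n$ means the upper-Schubert condition $K\le w_n([\#K])$ is automatically satisfied on all of $\widetilde{X}_{w_n}$; so the Richardson variety $\widetilde{X}_{w_n}^{\widetilde{s}_{2i-1}}$ is cut out inside $\widetilde{X}_{w_n}$ purely by the opposite-Schubert inequalities $K\ge \widetilde{s}_{2i-1}([\#K])$. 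The whole content is therefore to match, under the bijection \eqref{eqn:bij1}, the vanishing set of Pl\"ucker coordinates defining $X_{w_0s_i}^a$ with those extra inequalities.

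First I would record the one-line data. For $x=w_0s_i$ one has $x(k)=n-k+1$ except $x(i)=n-i=i-1$ and $x(i+1)=n-i+1=i$ (using $n=2i-1$). By \eqref{eqn:BruhatOrderOnGr} I would compute, for each $k\in[n-1]$, the set of $I\in\binom{[n]}{k}$ with $I\le x([k])$; the only subtlety occurs at $k=i-1$ and $k=i$, where the ``dip'' of $x$ at position $i$ changes which subsets are admissible. On the other side, $\widetilde{s}_{2i-1}$ is a single transposition, so the condition $K\ge\widetilde{s}_{2i-1}([\#K])$ via \eqref{eqn:BruhatOrderOnGr} is nontrivial only for the block $\#K=2k-1$ straddling position $2i-1$, and otherwise reduces to $K\ge\mathrm{id}([\#K])$ which is vacuous. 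I would then translate $K$ through $\tau_k(K\setminus[k-1])$ (recall \eqref{eqn:richardson1} forces $[k-1]\subset K\subset[k+n-1]$ on $\widetilde{X}_{w_n}$) and check the two implications \eqref{eqn:richardson2} and \eqref{eqn:richardson3} exactly as in the proof of Lemma~\ref{lem:richardson}, reducing everything to a finite comparison of explicit index sets for each value of $k$.

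The main obstacle I expect is the bookkeeping at the critical degree $k=i$ (equivalently $2k-1=2i-1=n$): this is precisely where $\widetilde{s}_{2i-1}$ acts and simultaneously where the non-standard value $x(i)=i-1$ sits, so the two sides must be shown to agree on a delicate set of subsets rather than on the ``generic'' ones, and I will need to verify that the extra lower bound from $u=\widetilde{s}_{2i-1}$ eliminates exactly those $K$ whose image under $\tau_k$ fails $I\le x([i])$, and no others. A useful sanity check, which I would carry out first, is the self-consistency requirement $u=\widetilde{s}_{2i-1}\le w_n$ so that the Richardson variety is nonempty; one verifies $\widetilde{s}_{2i-1}([2k-1])\le w_n([2k-1])$ for all $k$ directly from the definition of $w_n$. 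Once the critical degree is handled, the remaining degrees $k<i$ and $k>i$ follow because there the opposite-Schubert condition is vacuous and the admissible subsets on both sides coincide with those already computed in the proof of Lemma~\ref{lem:richardson} for the Coxeter-type element, completing the argument.
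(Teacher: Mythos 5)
Your plan is essentially the paper's own proof: both reduce the claim to checking \eqref{eqn:richardson2} and \eqref{eqn:richardson3}, observe that everything is vacuous outside the middle degree (in fact only $k=i$ is nontrivial, not $k=i-1$, since $x([i-1])$ is still the maximal element of $\binom{[n]}{i-1}$), and there match the unique extra vanishing coordinate $p_{[i,n]}$ on the Schubert side with $\tilde{p}_{[n]}$ on the Richardson side via $\pi_i^*$. The only slip is your remark that at degrees $k\neq i$ the admissible subsets coincide with those computed in Lemma~\ref{lem:richardson}: for $x=w_0s_i$ they are in fact \emph{all} subsets, because $x([k])$ is maximal for $k\neq i$, but the conclusion that no extra condition is imposed there is still correct, so the argument goes through.
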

\begin{proof}
First note that $w_0s_i([i])=\{n-i\}\cup[n-i+2,n]=\{i-1\}\cup[i+1,n]$ and $w_0([i])=[n-i+1,n]=[i,n]$. 
Let $J\in\binom{n}{k}$, then $J\nleq w_0s_{i}([k])=[n-k+1,n]$ if and only if $k=i$ and $J=[i,n]$.

On the other hand, recall that $w_n([2k-1])=[k-1]\cup[n+k-1,n]$ and
\[
\tilde{s}_{2i-1}([2k-1])=\left\{\begin{matrix}
[2k-1] & \text { if } k\not =i,\\
[2i-2]\cup \{2i\} & \text{ if } k=i.
\end{matrix}\right.
\]
If $K\in\binom{2n-2}{2k-1}$ is such that $K\leq w_n([2k-1])$, then $K\ngeq \tilde{s}_{2i-1}([2k-1])$ if and only if $k=i$ and $K=[2i-1]=[n]$.

At this point the claim follows from $\pi_i^*(p_{[i,n]})=\tilde{p}_{[i-1]\cup\rho_i([i,n])}=\tilde{p}_{[n]}$. 
\end{proof}

\begin{corollary} 
\begin{enumerate}
\item If $n$ is even, then all Schubert divisors $X_{w_0s_i}\subset \Flag_n$ become reducible under Feigin's degeneration.
\item If $n$ is odd, then the Schubert divisor $X_{w_0s_{\frac{n+1}{2}}}\subset \Flag_n$ stays irreducible under Feigin's degeneration, while all the others become reducible.
\end{enumerate}
\end{corollary}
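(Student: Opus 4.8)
The plan is to combine the two results just proved: Theorem~\ref{cor:redcodim1} for the reducible divisors, and Proposition~\ref{prop:oddrichardson} together with the irreducibility of Richardson varieties for the single surviving divisor in the odd case. Throughout I assume $n>2$, as required by Theorem~\ref{cor:redcodim1}; for $n=2$ the unique ``divisor'' $X_{w_0s_1}=X_e$ is a point, so the statement is understood for $n>2$.

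Recall first that the Schubert divisors of $\Flag_n$ are exactly the varieties $X_{w_0s_i}$ with $i\in[n-1]$. For the reducibility assertions I would simply read off Theorem~\ref{cor:redcodim1}. When $n$ is even the theorem imposes no restriction on $i$, so $X^a_{w_0s_i}$ is reducible for every $i\in[n-1]$, which is statement (1). When $n$ is odd the theorem applies to every $i$ with $i\neq\frac{n+1}{2}$, so all Schubert divisors except possibly $X_{w_0s_{(n+1)/2}}$ become reducible.

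The remaining divisor $X_{w_0s_{(n+1)/2}}$, present only for odd $n$, is handled by Proposition~\ref{prop:oddrichardson}: setting $i=\frac{n+1}{2}$, so that $n=2i-1$, the proposition yields $\zeta(X^a_{w_0s_i})=\widetilde{X}^{\widetilde{s}_{2i-1}}_{w_n}$. Since $\zeta$ comes from the coordinate-ring isomorphism $\xi^*$, this is an isomorphism of varieties, and in particular $X^a_{w_0s_i}$ is irreducible if and only if the Richardson variety $\widetilde{X}^{\widetilde{s}_{2i-1}}_{w_n}$ is. The latter is nonempty, being isomorphic to the nonempty degenerate Schubert variety $X^a_{w_0s_i}$, hence irreducible by the classical fact that every nonempty Richardson variety is irreducible. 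This gives the irreducibility claim in (2) and completes the proof.

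There is essentially no obstacle here beyond the index bookkeeping that matches $i=\frac{n+1}{2}$ with the hypothesis $n=2i-1$ of Proposition~\ref{prop:oddrichardson}; the only external input is the standard irreducibility of Richardson varieties, which is already used implicitly in Section~\ref{sec:rich}.
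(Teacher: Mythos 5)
Your proof is correct and follows exactly the route the paper intends: the corollary is stated immediately after Theorem~\ref{cor:redcodim1} and Proposition~\ref{prop:oddrichardson} precisely because it is their direct combination, with the irreducibility of the surviving divisor for odd $n$ coming from the classical irreducibility of (nonempty) Richardson varieties. Your remark about the implicit hypothesis $n>2$ and the index matching $i=\frac{n+1}{2}\leftrightarrow n=2i-1$ is accurate bookkeeping, nothing more is needed.
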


\bibliographystyle{alpha} 
\bibliography{Trop.bib}

\newcommand{\etalchar}[1]{$^{#1}$}
\begin{thebibliography}{CIFF{\etalchar{+}}17}

\bibitem[BB05]{BB05}
Anders Bj\"orner and Francesco Brenti.
\newblock {\em Combinatorics of {C}oxeter groups}, volume 231 of {\em Graduate
  Texts in Mathematics}.
\newblock Springer, New York, 2005.

\bibitem[Cal02]{Cal02}
Philippe Caldero.
\newblock Toric degenerations of {S}chubert varieties.
\newblock {\em Transformation groups}, 7(1):51--60, 2002.

\bibitem[CFF21]{ChiriviFangFourier_degSchubert}
Rocco Chiriv\`\i, Xin Fang, and Ghislain Fourier.
\newblock Degenerate {S}chubert varieties in type {${\rm A}$}.
\newblock {\em Transform. Groups}, 26(4):1189--1215, 2021.

\bibitem[CIFF{\etalchar{+}}17]{CFFFR17}
Giovanni Cerulli~Irelli, Xin Fang, Evgeny Feigin, Ghislain Fourier, and Markus
  Reineke.
\newblock Linear degenerations of flag varieties.
\newblock {\em Math. Zeitschrift}, 287:615--654, 2017.

\bibitem[CIFR12]{CFR12}
Giovanni Cerulli~Irelli, Evgeny Feigin, and Markus Reineke.
\newblock Quiver grassmannians and degenerate flag varieties.
\newblock {\em Algebra and Number Theory}, 6(1):165--194, 2012.

\bibitem[CIL15]{C-IL15}
Giovanni Cerulli~Irelli and Martina Lanini.
\newblock Degenerate flag varieties of type {A} and {C} are {S}chubert
  varieties.
\newblock {\em Int. Math. Res. Not. IMRN}, (15):6353--6374, 2015.

\bibitem[CM20]{CM}
Oliver Clarke and Fatemeh Mohammadi.
\newblock Standard monomial theory and toric degenerations of schubert
  varieties from matching field tableaux.
\newblock {\em Journal of Algebra}, 559:646--678, 2020.

\bibitem[Dev16]{sagemath}
The~Sage Developers.
\newblock {\em {S}ageMath, the {S}age {M}athematics {S}oftware {S}ystem
  ({V}ersion 7.2)}, 2016.
\newblock {\tt http://www.sagemath.org}.

\bibitem[Fei11]{Fei11}
Evegeny Feigin.
\newblock Degenerate flag varieties and the median genocchi numbers.
\newblock {\em Math. Res. Lett.}, 18(6):1163--1178, 2011.

\bibitem[Fei12]{F12}
Evgeny Feigin.
\newblock {$\Bbb{G}_a^M$} degeneration of flag varieties.
\newblock {\em Selecta Math. (N.S.)}, 18(3):513--537, 2012.

\bibitem[Fou16]{F16}
Ghislain Fourier.
\newblock P{BW}-degenerated {D}emazure modules and {S}chubert varieties for
  triangular elements.
\newblock {\em J. Combin. Theory Ser. A}, 139:132--152, 2016.

\bibitem[Ful97]{Fu97}
William Fulton.
\newblock {\em Young tableaux}, volume~35 of {\em London Mathematical Society
  Student Texts}.
\newblock Cambridge University Press, Cambridge, 1997.
\newblock With applications to representation theory and geometry.

\bibitem[GS]{M2}
Daniel~R. Grayson and Michael~E. Stillman.
\newblock Macaulay2, a software system for research in algebraic geometry.
\newblock Available at \url{http://www.math.uiuc.edu/Macaulay2/}.

\bibitem[HH11]{HH}
J\"{u}rgen Herzog and Takayuki Hibi.
\newblock {\em Monomial ideals}, volume 260 of {\em Graduate Texts in
  Mathematics}.
\newblock Springer-Verlag London, Ltd., London, 2011.

\bibitem[Kam22]{Kambaso_demazure}
Kunda Kambaso.
\newblock Homogeneous bases for {D}emazure modules.
\newblock {\em Comm. Algebra}, 50(7):2934--2953, 2022.

\bibitem[KR87]{KR}
George~R. Kempf and Annamalai Ramanathan.
\newblock Multi-cones over schubert varieties.
\newblock {\em Inventiones mathematicae}, 87:353--364, 1987.

\bibitem[LLM98]{LLM}
Venkatramani Lakshmibai, Peter Littelmann, and Peter Magyar.
\newblock Standard monomial theory and applications.
\newblock In {\em Broer A., Daigneault A., Sabidussi G. (eds) Representation
  Theories and Algebraic Geometry. Nato ASI Series (Series C: Mathematical and
  Physical Sciences)}, volume 514. Springer, 1998.

\bibitem[LS19]{LS18}
Martina Lanini and Elisabetta Strickland.
\newblock Cohomology of the flag variety under {PBW} degenerations.
\newblock {\em Transform. Groups}, 24(3):835--844, 2019.

\bibitem[Slo]{OEIS}
N.~J.~A. Sloane.
\newblock The on-line encyclopedia of integer sequences.
\newblock http://oeis.org.

\bibitem[Stu96]{Stu96}
Bernd Sturmfels.
\newblock {\em Gr{\"o}bner bases and convex polytopes}, volume~8 of {\em
  American Mathematical Soc.}
\newblock 1996.

\end{thebibliography}

\section*{Appendix}
Table~\ref{tab:S5} shows which of the criteria for $\init_{\bf w}(\mathcal{I}_w)$ to contain a monomial apply to which elements $w\in S_5$. 
It has to be read as follows: the first column contains $w\in S_5$ written in one-line notation, the second  as a reduced word. 
In the third column ``$\times$" indicates that $\init_{\bf w}(\mathcal{I}_w)$ contains a monomial, resp. ``$-$" that it does not. The last columns labeled (1) to (7) indicate which of the points of Theorem~\ref{thm:alllemma} apply to $w$.
The last row indicates how often $\times$ appears in the corresponding column.

{
\footnotesize
\begin{longtable}{@{\extracolsep{\fill}}ll | c|lllllll@{}}
%\begin{tabular}{llllllllll}
$w$ one-line & $w$ reduced word & mono. & (1) & (2) & (3) & (4)& (5) & (6) & (7) \\ \hline \endhead
$[1, 2, 3, 4, 5]$ & $1$ & $-$ & $-$ & $-$ & $-$ & $-$ & $-$ & $-$ & $-$ \\
$[1, 2, 3, 5, 4]$ & $s_{4}$ & $-$ & $-$ & $-$ & $-$ & $-$ & $-$ & $-$ & $-$ \\
$[1, 2, 4, 3, 5]$ & $s_{3}$ & $-$ & $-$ & $-$ & $-$ & $-$ & $-$ & $-$ & $-$ \\
$[1, 2, 4, 5, 3]$ & $s_{3}s_{4}$ & $\times$ & $\times$ & $-$ & $\times$ & $-$ & $-$ & $\times$ & $-$ \\
$[1, 2, 5, 3, 4]$ & $s_{4}s_{3}$ & $-$ & $-$ & $-$ & $-$ & $-$ & $-$ & $-$ & $-$ \\
$[1, 2, 5, 4, 3]$ & $s_{3}s_{4}s_{3}$ & $-$ & $-$ & $-$ & $-$ & $-$ & $-$ & $-$ & $-$ \\
$[1, 3, 2, 4, 5]$ & $s_{2}$ & $-$ & $-$ & $-$ & $-$ & $-$ & $-$ & $-$ & $-$ \\
$[1, 3, 2, 5, 4]$ & $s_{4}s_{2}$ & $-$ & $-$ & $-$ & $-$ & $-$ & $-$ & $-$ & $-$ \\
$[1, 3, 4, 2, 5]$ & $s_{2}s_{3}$ & $\times$ & $\times$ & $-$ & $\times$ & $-$ & $-$ & $\times$ & $-$ \\
$[1, 3, 4, 5, 2]$ & $s_{2}s_{3}s_{4}$ & $\times$ & $\times$ & $-$ & $\times$ & $\times$ & $-$ & $\times$ & $-$ \\
$[1, 3, 5, 2, 4]$ & $s_{4}s_{2}s_{3}$ & $\times$ & $\times$ & $-$ & $\times$ & $-$ & $-$ & $\times$ & $-$ \\
$[1, 3, 5, 4, 2]$ & $s_{2}s_{3}s_{4}s_{3}$ & $\times$ & $\times$ & $-$ & $\times$ & $\times$ & $-$ & $\times$ & $-$ \\
$[1, 4, 2, 3, 5]$ & $s_{3}s_{2}$ & $-$ & $-$ & $-$ & $-$ & $-$ & $-$ & $-$ & $-$ \\
$[1, 4, 2, 5, 3]$ & $s_{3}s_{4}s_{2}$ & $\times$ & $-$ & $-$ & $\times$ & $-$ & $-$ & $\times$ & $-$ \\
$[1, 4, 3, 2, 5]$ & $s_{2}s_{3}s_{2}$ & $-$ & $-$ & $-$ & $-$ & $-$ & $-$ & $-$ & $-$ \\
$[1, 4, 3, 5, 2]$ & $s_{2}s_{3}s_{4}s_{2}$ & $\times$ & $\times$ & $-$ & $-$ & $\times$ & $-$ & $\times$ & $-$ \\
$[1, 4, 5, 2, 3]$ & $s_{3}s_{4}s_{2}s_{3}$ & $\times$ & $\times$ & $\times$ & $\times$ & $-$ & $\times$ & $\times$ & $-$ \\
$[1, 4, 5, 3, 2]$ & $s_{2}s_{3}s_{4}s_{2}s_{3}$ & $\times$ & $\times$ & $-$ & $\times$ & $-$ & $-$ & $\times$ & $-$ \\
$[1, 5, 2, 3, 4]$ & $s_{4}s_{3}s_{2}$ & $-$ & $-$ & $-$ & $-$ & $-$ & $-$ & $-$ & $-$ \\
$[1, 5, 2, 4, 3]$ & $s_{3}s_{4}s_{3}s_{2}$ & $-$ & $-$ & $-$ & $-$ & $-$ & $-$ & $-$ & $-$ \\
$[1, 5, 3, 2, 4]$ & $s_{4}s_{2}s_{3}s_{2}$ & $-$ & $-$ & $-$ & $-$ & $-$ & $-$ & $-$ & $-$ \\
$[1, 5, 3, 4, 2]$ & $s_{2}s_{3}s_{4}s_{3}s_{2}$ & $\times$ & $\times$ & $-$ & $-$ & $\times$ & $-$ & $-$ & $\times$ \\
$[1, 5, 4, 2, 3]$ & $s_{3}s_{4}s_{2}s_{3}s_{2}$ & $\times$ & $-$ & $\times$ & $-$ & $-$ & $\times$ & $-$ & $-$ \\
$[1, 5, 4, 3, 2]$ & $s_{2}s_{3}s_{4}s_{2}s_{3}s_{2}$ & $-$ & $-$ & $-$ & $-$ & $-$ & $-$ & $-$ & $-$ \\
$[2, 1, 3, 4, 5]$ & $s_{1}$ & $-$ & $-$ & $-$ & $-$ & $-$ & $-$ & $-$ & $-$ \\
$[2, 1, 3, 5, 4]$ & $s_{4}s_{1}$ & $-$ & $-$ & $-$ & $-$ & $-$ & $-$ & $-$ & $-$ \\
$[2, 1, 4, 3, 5]$ & $s_{3}s_{1}$ & $-$ & $-$ & $-$ & $-$ & $-$ & $-$ & $-$ & $-$ \\
$[2, 1, 4, 5, 3]$ & $s_{3}s_{4}s_{1}$ & $\times$ & $\times$ & $-$ & $\times$ & $-$ & $-$ & $-$ & $-$ \\
$[2, 1, 5, 3, 4]$ & $s_{4}s_{3}s_{1}$ & $-$ & $-$ & $-$ & $-$ & $-$ & $-$ & $-$ & $-$ \\
$[2, 1, 5, 4, 3]$ & $s_{3}s_{4}s_{3}s_{1}$ & $-$ & $-$ & $-$ & $-$ & $-$ & $-$ & $-$ & $-$ \\
$[2, 3, 1, 4, 5]$ & $s_{1}s_{2}$ & $\times$ & $\times$ & $-$ & $\times$ & $-$ & $-$ & $\times$ & $-$ \\
$[2, 3, 1, 5, 4]$ & $s_{4}s_{1}s_{2}$ & $\times$ & $\times$ & $-$ & $\times$ & $-$ & $-$ & $\times$ & $-$ \\
$[2, 3, 4, 1, 5]$ & $s_{1}s_{2}s_{3}$ & $\times$ & $\times$ & $-$ & $\times$ & $\times$ & $-$ & $\times$ & $-$ \\
$[2, 3, 4, 5, 1]$ & $s_{1}s_{2}s_{3}s_{4}$ & $\times$ & $\times$ & $-$ & $\times$ & $\times$ & $-$ & $\times$ & $-$ \\
$[2, 3, 5, 1, 4]$ & $s_{4}s_{1}s_{2}s_{3}$ & $\times$ & $\times$ & $-$ & $\times$ & $\times$ & $-$ & $\times$ & $-$ \\
$[2, 3, 5, 4, 1]$ & $s_{1}s_{2}s_{3}s_{4}s_{3}$ & $\times$ & $\times$ & $-$ & $\times$ & $\times$ & $-$ & $\times$ & $-$ \\
$[2, 4, 1, 3, 5]$ & $s_{3}s_{1}s_{2}$ & $\times$ & $\times$ & $-$ & $\times$ & $-$ & $-$ & $\times$ & $-$ \\
$[2, 4, 1, 5, 3]$ & $s_{3}s_{4}s_{1}s_{2}$ & $\times$ & $\times$ & $-$ & $\times$ & $-$ & $-$ & $\times$ & $-$ \\
$[2, 4, 3, 1, 5]$ & $s_{1}s_{2}s_{3}s_{2}$ & $\times$ & $\times$ & $-$ & $\times$ & $\times$ & $-$ & $\times$ & $-$ \\
$[2, 4, 3, 5, 1]$ & $s_{1}s_{2}s_{3}s_{4}s_{2}$ & $\times$ & $\times$ & $-$ & $\times$ & $\times$ & $-$ & $\times$ & $-$ \\
$[2, 4, 5, 1, 3]$ & $s_{3}s_{4}s_{1}s_{2}s_{3}$ & $\times$ & $\times$ & $\times$ & $\times$ & $\times$ & $\times$ & $\times$ & $-$ \\
$[2, 4, 5, 3, 1]$ & $s_{1}s_{2}s_{3}s_{4}s_{2}s_{3}$ & $\times$ & $\times$ & $-$ & $\times$ & $\times$ & $-$ & $\times$ & $-$ \\
$[2, 5, 1, 3, 4]$ & $s_{4}s_{3}s_{1}s_{2}$ & $\times$ & $\times$ & $-$ & $\times$ & $-$ & $-$ & $\times$ & $-$ \\
$[2, 5, 1, 4, 3]$ & $s_{3}s_{4}s_{3}s_{1}s_{2}$ & $\times$ & $\times$ & $-$ & $\times$ & $-$ & $-$ & $\times$ & $-$ \\
$[2, 5, 3, 1, 4]$ & $s_{4}s_{1}s_{2}s_{3}s_{2}$ & $\times$ & $\times$ & $-$ & $\times$ & $\times$ & $-$ & $\times$ & $-$ \\
$[2, 5, 3, 4, 1]$ & $s_{1}s_{2}s_{3}s_{4}s_{3}s_{2}$ & $\times$ & $\times$ & $-$ & $\times$ & $\times$ & $-$ & $\times$ & $-$ \\
$[2, 5, 4, 1, 3]$ & $s_{3}s_{4}s_{1}s_{2}s_{3}s_{2}$ & $\times$ & $\times$ & $\times$ & $\times$ & $\times$ & $\times$ & $\times$ & $-$ \\
$[2, 5, 4, 3, 1]$ & $s_{1}s_{2}s_{3}s_{4}s_{2}s_{3}s_{2}$ & $\times$ & $\times$ & $-$ & $\times$ & $\times$ & $-$ & $\times$ & $-$ \\
$[3, 1, 2, 4, 5]$ & $s_{2}s_{1}$ & $-$ & $-$ & $-$ & $-$ & $-$ & $-$ & $-$ & $-$ \\
$[3, 1, 2, 5, 4]$ & $s_{4}s_{2}s_{1}$ & $-$ & $-$ & $-$ & $-$ & $-$ & $-$ & $-$ & $-$ \\
$[3, 1, 4, 2, 5]$ & $s_{2}s_{3}s_{1}$ & $\times$ & $-$ & $-$ & $\times$ & $-$ & $-$ & $\times$ & $-$ \\
$[3, 1, 4, 5, 2]$ & $s_{2}s_{3}s_{4}s_{1}$ & $\times$ & $-$ & $-$ & $\times$ & $\times$ & $-$ & $\times$ & $-$ \\
$[3, 1, 5, 2, 4]$ & $s_{4}s_{2}s_{3}s_{1}$ & $\times$ & $-$ & $-$ & $\times$ & $-$ & $-$ & $\times$ & $-$ \\
$[3, 1, 5, 4, 2]$ & $s_{2}s_{3}s_{4}s_{3}s_{1}$ & $\times$ & $-$ & $-$ & $\times$ & $\times$ & $-$ & $\times$ & $-$ \\
$[3, 2, 1, 4, 5]$ & $s_{1}s_{2}s_{1}$ & $-$ & $-$ & $-$ & $-$ & $-$ & $-$ & $-$ & $-$ \\
$[3, 2, 1, 5, 4]$ & $s_{4}s_{1}s_{2}s_{1}$ & $-$ & $-$ & $-$ & $-$ & $-$ & $-$ & $-$ & $-$ \\
$[3, 2, 4, 1, 5]$ & $s_{1}s_{2}s_{3}s_{1}$ & $\times$ & $\times$ & $-$ & $-$ & $\times$ & $-$ & $\times$ & $-$ \\
$[3, 2, 4, 5, 1]$ & $s_{1}s_{2}s_{3}s_{4}s_{1}$ & $\times$ & $\times$ & $-$ & $-$ & $\times$ & $-$ & $\times$ & $-$ \\
$[3, 2, 5, 1, 4]$ & $s_{4}s_{1}s_{2}s_{3}s_{1}$ & $\times$ & $\times$ & $-$ & $-$ & $\times$ & $-$ & $\times$ & $-$ \\
$[3, 2, 5, 4, 1]$ & $s_{1}s_{2}s_{3}s_{4}s_{3}s_{1}$ & $\times$ & $\times$ & $-$ & $-$ & $\times$ & $-$ & $\times$ & $-$ \\
$[3, 4, 1, 2, 5]$ & $s_{2}s_{3}s_{1}s_{2}$ & $\times$ & $\times$ & $\times$ & $\times$ & $-$ & $\times$ & $\times$ & $-$ \\
$[3, 4, 1, 5, 2]$ & $s_{2}s_{3}s_{4}s_{1}s_{2}$ & $\times$ & $\times$ & $-$ & $\times$ & $\times$ & $\times$ & $\times$ & $-$ \\
$[3, 4, 2, 1, 5]$ & $s_{1}s_{2}s_{3}s_{1}s_{2}$ & $\times$ & $\times$ & $-$ & $\times$ & $-$ & $-$ & $\times$ & $-$ \\
$[3, 4, 2, 5, 1]$ & $s_{1}s_{2}s_{3}s_{4}s_{1}s_{2}$ & $\times$ & $\times$ & $-$ & $\times$ & $-$ & $-$ & $\times$ & $-$ \\
$[3, 4, 5, 1, 2]$ & $s_{2}s_{3}s_{4}s_{1}s_{2}s_{3}$ & $\times$ & $\times$ & $\times$ & $\times$ & $-$ & $-$ & $\times$ & $-$ \\
$[3, 4, 5, 2, 1]$ & $s_{1}s_{2}s_{3}s_{4}s_{1}s_{2}s_{3}$ & $\times$ & $\times$ & $-$ & $\times$ & $-$ & $-$ & $\times$ & $-$ \\
$[3, 5, 1, 2, 4]$ & $s_{4}s_{2}s_{3}s_{1}s_{2}$ & $\times$ & $\times$ & $\times$ & $\times$ & $-$ & $\times$ & $\times$ & $-$ \\
$[3, 5, 1, 4, 2]$ & $s_{2}s_{3}s_{4}s_{3}s_{1}s_{2}$ & $\times$ & $\times$ & $-$ & $\times$ & $\times$ & $\times$ & $\times$ & $-$ \\
$[3, 5, 2, 1, 4]$ & $s_{4}s_{1}s_{2}s_{3}s_{1}s_{2}$ & $\times$ & $\times$ & $-$ & $\times$ & $-$ & $-$ & $\times$ & $-$ \\
$[3, 5, 2, 4, 1]$ & $s_{1}s_{2}s_{3}s_{4}s_{3}s_{1}s_{2}$ & $\times$ & $\times$ & $-$ & $\times$ & $-$ & $-$ & $\times$ & $-$ \\
$[3, 5, 4, 1, 2]$ & $s_{2}s_{3}s_{4}s_{1}s_{2}s_{3}s_{2}$ & $\times$ & $\times$ & $\times$ & $\times$ & $-$ & $-$ & $\times$ & $-$ \\
$[3, 5, 4, 2, 1]$ & $s_{1}s_{2}s_{3}s_{4}s_{1}s_{2}s_{3}s_{2}$ & $\times$ & $\times$ & $-$ & $\times$ & $-$ & $-$ & $\times$ & $-$ \\
$[4, 1, 2, 3, 5]$ & $s_{3}s_{2}s_{1}$ & $-$ & $-$ & $-$ & $-$ & $-$ & $-$ & $-$ & $-$ \\
$[4, 1, 2, 5, 3]$ & $s_{3}s_{4}s_{2}s_{1}$ & $\times$ & $-$ & $-$ & $\times$ & $-$ & $-$ & $\times$ & $-$ \\
$[4, 1, 3, 2, 5]$ & $s_{2}s_{3}s_{2}s_{1}$ & $-$ & $-$ & $-$ & $-$ & $-$ & $-$ & $-$ & $-$ \\
$[4, 1, 3, 5, 2]$ & $s_{2}s_{3}s_{4}s_{2}s_{1}$ & $\times$ & $\times$ & $-$ & $-$ & $\times$ & $-$ & $\times$ & $-$ \\
$[4, 1, 5, 2, 3]$ & $s_{3}s_{4}s_{2}s_{3}s_{1}$ & $\times$ & $-$ & $\times$ & $\times$ & $-$ & $\times$ & $\times$ & $-$ \\
$[4, 1, 5, 3, 2]$ & $s_{2}s_{3}s_{4}s_{2}s_{3}s_{1}$ & $\times$ & $-$ & $-$ & $\times$ & $-$ & $-$ & $\times$ & $-$ \\
$[4, 2, 1, 3, 5]$ & $s_{3}s_{1}s_{2}s_{1}$ & $-$ & $-$ & $-$ & $-$ & $-$ & $-$ & $-$ & $-$ \\
$[4, 2, 1, 5, 3]$ & $s_{3}s_{4}s_{1}s_{2}s_{1}$ & $\times$ & $-$ & $-$ & $\times$ & $-$ & $-$ & $\times$ & $-$ \\
$[4, 2, 3, 1, 5]$ & $s_{1}s_{2}s_{3}s_{2}s_{1}$ & $\times$ & $\times$ & $-$ & $-$ & $\times$ & $-$ & $-$ & $-$ \\
$[4, 2, 3, 5, 1]$ & $s_{1}s_{2}s_{3}s_{4}s_{2}s_{1}$ & $\times$ & $\times$ & $-$ & $-$ & $\times$ & $-$ & $\times$ & $-$ \\
$[4, 2, 5, 1, 3]$ & $s_{3}s_{4}s_{1}s_{2}s_{3}s_{1}$ & $\times$ & $\times$ & $\times$ & $-$ & $\times$ & $\times$ & $\times$ & $-$ \\
$[4, 2, 5, 3, 1]$ & $s_{1}s_{2}s_{3}s_{4}s_{2}s_{3}s_{1}$ & $\times$ & $\times$ & $-$ & $-$ & $\times$ & $-$ & $\times$ & $-$ \\
$[4, 3, 1, 2, 5]$ & $s_{2}s_{3}s_{1}s_{2}s_{1}$ & $\times$ & $-$ & $\times$ & $-$ & $-$ & $\times$ & $-$ & $-$ \\
$[4, 3, 1, 5, 2]$ & $s_{2}s_{3}s_{4}s_{1}s_{2}s_{1}$ & $\times$ & $-$ & $-$ & $-$ & $\times$ & $\times$ & $\times$ & $-$ \\
$[4, 3, 2, 1, 5]$ & $s_{1}s_{2}s_{3}s_{1}s_{2}s_{1}$ & $-$ & $-$ & $-$ & $-$ & $-$ & $-$ & $-$ & $-$ \\
$[4, 3, 2, 5, 1]$ & $s_{1}s_{2}s_{3}s_{4}s_{1}s_{2}s_{1}$ & $\times$ & $-$ & $-$ & $-$ & $-$ & $-$ & $\times$ & $-$ \\
$[4, 3, 5, 1, 2]$ & $s_{2}s_{3}s_{4}s_{1}s_{2}s_{3}s_{1}$ & $\times$ & $\times$ & $\times$ & $\times$ & $-$ & $-$ & $\times$ & $-$ \\
$[4, 3, 5, 2, 1]$ & $s_{1}s_{2}s_{3}s_{4}s_{1}s_{2}s_{3}s_{1}$ & $\times$ & $\times$ & $-$ & $\times$ & $-$ & $-$ & $\times$ & $-$ \\
$[4, 5, 1, 2, 3]$ & $s_{3}s_{4}s_{2}s_{3}s_{1}s_{2}$ & $\times$ & $\times$ & $\times$ & $\times$ & $-$ & $\times$ & $\times$ & $-$ \\
$[4, 5, 1, 3, 2]$ & $s_{2}s_{3}s_{4}s_{2}s_{3}s_{1}s_{2}$ & $\times$ & $\times$ & $-$ & $\times$ & $-$ & $\times$ & $\times$ & $-$ \\
$[4, 5, 2, 1, 3]$ & $s_{3}s_{4}s_{1}s_{2}s_{3}s_{1}s_{2}$ & $\times$ & $\times$ & $\times$ & $\times$ & $-$ & $\times$ & $\times$ & $-$ \\
$[4, 5, 2, 3, 1]$ & $s_{1}s_{2}s_{3}s_{4}s_{2}s_{3}s_{1}s_{2}$ & $\times$ & $\times$ & $-$ & $\times$ & $-$ & $-$ & $\times$ & $-$ \\
$[4, 5, 3, 1, 2]$ & $s_{2}s_{3}s_{4}s_{1}s_{2}s_{3}s_{1}s_{2}$ & $\times$ & $\times$ & $\times$ & $\times$ & $-$ & $-$ & $\times$ & $-$ \\
$[4, 5, 3, 2, 1]$ & $s_{1}s_{2}s_{3}s_{4}s_{1}s_{2}s_{3}s_{1}s_{2}$ & $\times$ & $\times$ & $-$ & $\times$ & $-$ & $-$ & $\times$ & $-$ \\
$[5, 1, 2, 3, 4]$ & $s_{4}s_{3}s_{2}s_{1}$ & $-$ & $-$ & $-$ & $-$ & $-$ & $-$ & $-$ & $-$ \\
$[5, 1, 2, 4, 3]$ & $s_{3}s_{4}s_{3}s_{2}s_{1}$ & $-$ & $-$ & $-$ & $-$ & $-$ & $-$ & $-$ & $-$ \\
$[5, 1, 3, 2, 4]$ & $s_{4}s_{2}s_{3}s_{2}s_{1}$ & $-$ & $-$ & $-$ & $-$ & $-$ & $-$ & $-$ & $-$ \\
$[5, 1, 3, 4, 2]$ & $s_{2}s_{3}s_{4}s_{3}s_{2}s_{1}$ & $\times$ & $\times$ & $-$ & $-$ & $\times$ & $-$ & $-$ & $-$ \\
$[5, 1, 4, 2, 3]$ & $s_{3}s_{4}s_{2}s_{3}s_{2}s_{1}$ & $\times$ & $-$ & $\times$ & $-$ & $-$ & $\times$ & $-$ & $-$ \\
$[5, 1, 4, 3, 2]$ & $s_{2}s_{3}s_{4}s_{2}s_{3}s_{2}s_{1}$ & $-$ & $-$ & $-$ & $-$ & $-$ & $-$ & $-$ & $-$ \\
$[5, 2, 1, 3, 4]$ & $s_{4}s_{3}s_{1}s_{2}s_{1}$ & $-$ & $-$ & $-$ & $-$ & $-$ & $-$ & $-$ & $-$ \\
$[5, 2, 1, 4, 3]$ & $s_{3}s_{4}s_{3}s_{1}s_{2}s_{1}$ & $-$ & $-$ & $-$ & $-$ & $-$ & $-$ & $-$ & $-$ \\
$[5, 2, 3, 1, 4]$ & $s_{4}s_{1}s_{2}s_{3}s_{2}s_{1}$ & $\times$ & $\times$ & $-$ & $-$ & $\times$ & $-$ & $-$ & $\times$ \\
$[5, 2, 3, 4, 1]$ & $s_{1}s_{2}s_{3}s_{4}s_{3}s_{2}s_{1}$ & $\times$ & $\times$ & $-$ & $-$ & $\times$ & $-$ & $-$ & $\times$ \\
$[5, 2, 4, 1, 3]$ & $s_{3}s_{4}s_{1}s_{2}s_{3}s_{2}s_{1}$ & $\times$ & $\times$ & $\times$ & $-$ & $\times$ & $\times$ & $-$ & $\times$ \\
$[5, 2, 4, 3, 1]$ & $s_{1}s_{2}s_{3}s_{4}s_{2}s_{3}s_{2}s_{1}$ & $\times$ & $\times$ & $-$ & $-$ & $\times$ & $-$ & $-$ & $\times$ \\
$[5, 3, 1, 2, 4]$ & $s_{4}s_{2}s_{3}s_{1}s_{2}s_{1}$ & $\times$ & $-$ & $\times$ & $-$ & $-$ & $\times$ & $-$ & $-$ \\
$[5, 3, 1, 4, 2]$ & $s_{2}s_{3}s_{4}s_{3}s_{1}s_{2}s_{1}$ & $\times$ & $-$ & $-$ & $-$ & $\times$ & $\times$ & $-$ & $-$ \\
$[5, 3, 2, 1, 4]$ & $s_{4}s_{1}s_{2}s_{3}s_{1}s_{2}s_{1}$ & $-$ & $-$ & $-$ & $-$ & $-$ & $-$ & $-$ & $-$ \\
$[5, 3, 2, 4, 1]$ & $s_{1}s_{2}s_{3}s_{4}s_{3}s_{1}s_{2}s_{1}$ & $\times$ & $-$ & $-$ & $-$ & $-$ & $-$ & $-$ & $\times$ \\
$[5, 3, 4, 1, 2]$ & $s_{2}s_{3}s_{4}s_{1}s_{2}s_{3}s_{2}s_{1}$ & $\times$ & $\times$ & $\times$ & $\times$ & $-$ & $-$ & $-$ & $\times$ \\
$[5, 3, 4, 2, 1]$ & $s_{1}s_{2}s_{3}s_{4}s_{1}s_{2}s_{3}s_{2}s_{1}$ & $\times$ & $\times$ & $-$ & $\times$ & $-$ & $-$ & $-$ & $\times$ \\
$[5, 4, 1, 2, 3]$ & $s_{3}s_{4}s_{2}s_{3}s_{1}s_{2}s_{1}$ & $\times$ & $-$ & $\times$ & $-$ & $-$ & $\times$ & $-$ & $-$ \\
$[5, 4, 1, 3, 2]$ & $s_{2}s_{3}s_{4}s_{2}s_{3}s_{1}s_{2}s_{1}$ & $\times$ & $-$ & $-$ & $-$ & $-$ & $\times$ & $-$ & $-$ \\
$[5, 4, 2, 1, 3]$ & $s_{3}s_{4}s_{1}s_{2}s_{3}s_{1}s_{2}s_{1}$ & $\times$ & $-$ & $\times$ & $-$ & $-$ & $\times$ & $-$ & $-$ \\
$[5, 4, 2, 3, 1]$ & $s_{1}s_{2}s_{3}s_{4}s_{2}s_{3}s_{1}s_{2}s_{1}$ & $-$ & $-$ & $-$ & $-$ & $-$ & $-$ & $-$ & $-$ \\
$[5, 4, 3, 1, 2]$ & $s_{2}s_{3}s_{4}s_{1}s_{2}s_{3}s_{1}s_{2}s_{1}$ & $\times$ & $-$ & $\times$ & $-$ & $-$ & $-$ & $-$ & $-$ \\
$[5, 4, 3, 2, 1]$ & $s_{1}s_{2}s_{3}s_{4}s_{1}s_{2}s_{3}s_{1}s_{2}s_{1}$ & $-$ & $-$ & $-$ & $-$ & $-$ & $-$ & $-$ & $-$ \\
\hline
120 & & $85$ & $64$ & $22$ & $57$ & $36$ & $22$ & $65$ & $8$ \\
%\end{tabular}
\caption{Initial ideals $\init_{\bf w}(\mathcal{I}_w)$ (see \S\ref{sec:ideals}) for $w\in S_5$ and which criteria {to detect initial monomials from Theorem~\ref{thm:alllemma} apply.}}\label{tab:S5}
\end{longtable}
}

\end{document}